\newcommand{\eq}[2]{\begin{equation}\label{#1}#2 \end{equation}}
\newcommand{\arir}{\ar@{^{(}->}}
\newcommand{\aril}{\ar@{_{(}->}}
\newcommand{\are}{\ar@{>>}}
\newcommand{\xr}[1] {\xrightarrow{#1}}
\newtheorem{lemma}{Lemma}[section]
\newtheorem{thm}[lemma]{Theorem}
\newtheorem{prop}[lemma]{Proposition}
\newtheorem{cor}[lemma]{Corollary}
\theoremstyle{definition}
\newtheorem{defn}[lemma]{Definition}
\newtheorem{para}[lemma]{}
\theoremstyle{remark}
\newtheorem{remark}[lemma]{Remark}
\newtheorem{claim}{Claim}[lemma]
\newtheorem*{claim*}{Claim}
\newcounter{zaehler} 
\numberwithin{equation}{lemma}
\newcommand{\Z}{\mathbb{Z}}
\renewcommand{\P}{\mathbf{P}}
\newcommand{\A}{\mathbf{A}}
\newcommand{\sF}{\mathcal{F}}
\newcommand{\sH}{\mathcal{H}}
\newcommand{\sL}{\mathcal{L}}
\newcommand{\sO}{\mathcal{O}}
\newcommand{\sM}{\mathcal{M}}
\newcommand{\sW}{\mathcal{W}}
\newcommand{\sU}{\mathcal{U}}
\newcommand{\sV}{\mathcal{V}}
\newcommand{\sX}{\mathcal{X}}
\newcommand{\sY}{\mathcal{Y}}
\newcommand{\sZ}{\mathcal{Z}}
\newcommand{\fY}{\mathfrak{Y}}
\newcommand{\fX}{\mathfrak{X}}
\newcommand{\fm}{\mathfrak{m}}
\newcommand{\fp}{\mathfrak{p}}
\newcommand{\Xb}{{\overline{X}}}
\def\sNis{{s\mathrm{Nis}}}
\def\dNis{{d\mathrm{Nis}}}
\newcommand{\Cor}{\operatorname{\mathbf{Cor}}}
\newcommand{\HI}{\operatorname{\mathbf{HI}}}
\newcommand{\RSC}{{\operatorname{\mathbf{RSC}}}}
\newcommand{\RSCNis}{{\operatorname{\mathbf{RSC}}}_{\Nis}}
\newcommand{\ul}[1]{{\underline{#1}}}
\newcommand{\PST}{{\operatorname{\mathbf{PST}}}}
\newcommand{\NST}{\operatorname{\mathbf{NST}}}
\newcommand{\Hom}{\operatorname{Hom}}
\newcommand{\uHom}{\operatorname{\underline{Hom}}}
\newcommand{\Ker}{\operatorname{Ker}}
\newcommand{\Coker}{\operatorname{Coker}}
\newcommand{\Tr}{\operatorname{Tr}}
\newcommand{\Nm}{\operatorname{Nm}}
\newcommand{\Spec}{\operatorname{Spec}}
\newcommand{\Proj}{\operatorname{Proj}}
\newcommand{\Sm}{\operatorname{\mathbf{Sm}}}
\newcommand{\Sch}{\operatorname{\mathbf{Sch}}}
\newcommand{\Ab}{\operatorname{\mathbf{Ab}}}
\newcommand{\tr}{{\operatorname{tr}}}
\newcommand{\Ztr}{{\operatorname{\mathbb{Z}_{\tr}}}}
\newcommand{\eff}{{\operatorname{eff}}}
\newcommand{\fin}{{\operatorname{fin}}}
\newcommand{\red}{{\operatorname{red}}}
\newcommand{\Nis}{{\operatorname{Nis}}}
\newcommand{\et}{{\operatorname{\acute{e}t}}}
\newcommand{\inj}{\hookrightarrow}
\newcommand{\id}{{\operatorname{id}}}
\newcommand{\codim}{{\operatorname{codim}}}
\newcommand{\ch}{{\operatorname{ch}}}
\newcommand{\colim}{\operatornamewithlimits{\varinjlim}}
\newcommand{\ol}{\overline}
\def\lim{\operatornamewithlimits{\varprojlim}}
\renewcommand{\epsilon}{\varepsilon}
\renewcommand{\sp}{{\rm sp}}
\newcommand{\MNST}{\operatorname{\mathbf{MNST}}}
\newcommand{\MCor}{\operatorname{\mathbf{MCor}}}
\newcommand{\MPST}{\operatorname{\mathbf{MPST}}}
\newcommand{\CI}{\operatorname{\mathbf{CI}}}
\newcommand{\CItsp}{\CI^{\tau,sp}}
\newcommand{\CItspNis}{\CI^{\tau,sp}_{\Nis}}
\newcommand{\DMeff}{\operatorname{\mathbf{DM}}^{\eff}}
\newcommand{\CIt}{\CI^{\tau}}
\newcommand{\bcube}{{\ol{\square}}}
\newcommand{\ulMPST}{\operatorname{\mathbf{\underline{M}PST}}}
\newcommand{\ulMNST}{\operatorname{\mathbf{\underline{M}NST}}}
\newcommand{\ulMCor}{\operatorname{\mathbf{\underline{M}Cor}}}
\newcommand{\ulMCorls}{\ulMCor_{ls}}
\newcommand{\ulMCorfin}{\ulMCor^{\fin}}
\newcommand{\ulMCorfinls}{\ulMCor^{\fin}_{ls}}
\newcommand{\uMPST}{\operatorname{\mathbf{\underline{M}PST}}}
\newcommand{\uMNST}{\operatorname{\mathbf{\underline{M}NST}}}
\newcommand{\uMPSTlog}{\operatorname{\mathbf{\underline{M}PST}}_{\log}}
\newcommand{\uMNSTlog}{\operatorname{\mathbf{\underline{M}NST}}_{\log}}
\newcommand{\uMCor}{\operatorname{\mathbf{\underline{M}Cor}}}
\newcommand{\uMCorls}{\ulMCor_{ls}}
\newcommand{\uMCorfin}{\ulMCor^{\fin}}
\newcommand{\uMCorfinls}{\ulMCor^{\fin}_{ls}}
\newcommand{\ulomega}{\underline{\omega}}
\newcommand{\ulomegaCI}{\underline{\omega}^{\CI}}
\newcommand{\lDMeff}{\operatorname{\mathbf{logDM}}^{\eff}}
\def\mc{\mathrm{mc}}
\def\cd#1#2{{#1}_{(#2)}}
\def\isom{\overset{\simeq}\longrightarrow}
\def\tF{\tilde{F}}
\def\tL{\tilde{L}}
\def\tE{\tilde{E}}
\def\hL{\hat{L}}
\def\hY{\hat{Y}}
\def\hZ{\hat{Z}}
\def\tZ{\tilde{Z}}
\def\tD{\tilde{D}}
\def\sYred{\sY_{\red}}
\def\rmapo#1{\overset{#1}{\longrightarrow}}
\def\qfor{\;\text{ for }}
\def\qwith{\;\text{ with }}
\def\qaq{\;\text{ and }\;}
\def\cd#1#2{{#1}^{(#2)}}
\def\hen#1#2{#1^{h}_{|#2}}
\def\Fc{F_{-1}}
\def\Fcgm{F_{-1}^{(1)}}
\def\Gc{G_{-1}}
\def\Gcgm{G_{-1}^{(1)}}
\def\Fcontt#1{F^{(#1)}_{-1}}
\def\hcubespNis{h_{0,\Nis}^{\bcube,\sp}}
\def\Hlog#1{H_{\log}^{#1}}
\def\tauu#1{\tau^{(#1)}}
\def\triv{\mathrm{triv}}
\def\aVNis{a^V_\Nis}
\def\ulaNis{\ul{a}_\Nis}
\newcommand{\lPST}{{\operatorname{\mathbf{PSh}}^{ltr}}}
\newcommand{\dNST}{{\operatorname{\mathbf{Shv}}^{ltr}_{dNis}}}
\newcommand{\lSm}{{\operatorname{l\mathbf{Sm}}}}
\newcommand{\lCor}{{\operatorname{l\mathbf{Cor}}}}
\newcommand{\SmlSm}{{\operatorname{\mathbf{Sm}l\mathbf{Sm}}}}
\newcommand{\lCorSm}{\lCor_{\SmlSm}}
\def\fs{\mathrm{fs}}
\def\Log{\mathcal{L}og}
\def\pX{\partial \fX}
\def\pY{\partial \fY}
\def\fXMP{\fX^{MP}}
\def\Flog{F^{\log}}
\def\Glog{G^{\log}}
\def\alog{\alpha^{*\log}}
\def\tSm{\Sm^{\rm pro}}
\def\cd#1#2{{#1}^{(#2)}}
\def\kX#1{K^h_{X,#1}}
\def\kY#1{K^h_{Y,#1}}
\def\kh#1#2{K^h_{#1,#2}}
\def\Oh#1#2{\sO^h_{#1,#2}}
\def\Lfinls{\Lambda^{\fin}_{ls}}
\title{Reciprocity sheaves and logarithmic motives}
\author{Shuji Saito}
\address{Graduate School of Mathematical Sciences, University of Tokyo, 3-8-1 Komaba, Tokyo 153-8941, Japan}
\email{sshuji@msb.biglobe.ne.jp}
\thanks{The author is supported by the JSPS KAKENHI Grant (20H01791). }
\begin{document}
\maketitle
\tableofcontents

\begin{abstract}
We connect two developments aiming at extending Voevodsky's theory of motives over a field in such a way to encompass non-$\A^1$-invariant phenomina.
One is theory of \emph{reciprocity sheaves} introduced by Kahn-Saito-Yamazaki.
Another is theory of the triangulated category $\lDMeff$ of \emph{logarithmic motives} launched by Binda, Park and \O stv\ae r.
We prove that the Nisnevich cohomology of reciprocity sheaves is representable in
$\lDMeff$.
\end{abstract}

\section*{Introduction}

We fix once and for all a perfect base field $k$. 
The main purpose of this paper is to connect two developments aiming at extending Voevodsky's theory of motives over $k$ in such a way to encompass non-$\A^1$-invariant phenomina.
One is theory of \emph{reciprocity sheaves} introduced by Kahn-Saito-Yamazaki (\cite{ksyI} and \cite{ksyII}) and developed in \cite{shuji} and \cite{brs}. 
Voevodsky's theory is based on the category $\PST$ of \emph{presheaves with transers}, defined as the category of additive presheaves of abelian groups on 
the category $\Cor$ of finite correspondences:
$\Cor$ has the same objects as the category $\Sm$ of separated smooth schemes of finite type over $k$ and morphisms in $\Cor$ are finite correspondences. 
Let $\NST\subset \PST$ be the full subcategory of Nisnevich sheaves, i.e.
those objects $F\in \PST$ whose restrictions $F_X$ to the small \'etale site $X_{\et}$ over $X$ are Nisnevich sheaves for all $X\in \Sm$. 
Voevodsky proved that $\NST$ is a Grothendieck abelian category and defined the triangulated category $\DMeff$ of effective motives as the localization of 
the derived category $D(\NST)$ of complexes in $\NST$ with respect to an \emph{$\A^1$-weak equivalence} (see \cite[Def. 14.1]{mvw}).
It is equipped with a functor $M: \Sm \to \DMeff$ associating the motive $M(X)$ of $X\in \Sm$. 

Let $\HI_\Nis\subset\NST$ be the full subcategory consisting of $\A^1$-invariant objects, namely such $F\in \NST$ that the projection $\pi_X: X\times\A^1\to X$ induces an isomorphism $\pi_X^* :F(X)\simeq F(X\times\A^1)$ for any $X\in \Sm$. We say that $F\in \HI_\Nis$ is strictly $\A^1$-invariant if $\pi_X$ induces
isomorphisms
\[ \pi_X^* : H^i_\Nis(X,F_X) \simeq H^i_\Nis(X\times\A^1,F_{X\times\A^1})
\;\text{ for all } i\geq 0.\]
The following theorem plays a fundamental role in Voevodsky's theory.

 \begin{thm}\label{Voevodsky}(Voevodsky \cite{voetri})
Any $F\in \HI_\Nis$ is strictly $\A^1$-invariant and we have a natural isomorphism
\eq{voe}{H^i_\Nis(X,F_X)\simeq \Hom_{\DMeff}(M(X),L^{\A^1}F[i])\qfor X\in \Sm,}
where $L^{\A^1} : D(\NST)\to \DMeff$ is the localization functor.
\end{thm}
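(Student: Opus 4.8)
The plan is to separate the two assertions: the strict $\A^1$-invariance of any $F\in\HI_\Nis$, and the representability isomorphism \eqref{voe}. I would prove strict invariance first, since \eqref{voe} then follows by a formal adjunction argument.

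For the strict invariance I would build on the structural fact that Nisnevich sheafification $a_\Nis\colon\PST\to\NST$ is exact and preserves transfers, so that $\NST$ is a Grothendieck abelian category with enough injectives and the functor $F\mapsto F(X)$ is represented by $\Z_{\tr}(X)$. The crux is to show that for $F\in\HI_\Nis$ the cohomology \emph{presheaf} $G^i\colon X\mapsto H^i_\Nis(X,F_X)$ is itself homotopy invariant. Granting this, evaluating $G^i$ on $\pi_X$ and on its zero section and using that the composite is the identity forces $\pi_X^*\colon H^i_\Nis(X,F_X)\to H^i_\Nis(X\times\A^1,F_{X\times\A^1})$ to be an isomorphism for all $i\geq 0$, which is exactly strict invariance. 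To prove homotopy invariance of $G^i$ I would run Voevodsky's Gersten machinery over the perfect field $k$: a homotopy invariant presheaf with transfers satisfies injectivity on the small Nisnevich site (sections over a smooth connected scheme inject into the stalk at the generic point), and the coniveau filtration then yields a Gersten--Cousin resolution of $F_X$ whose terms are pushforwards of homotopy invariant data from points. Since this resolution is by Nisnevich-acyclic sheaves it computes $H^*_\Nis(X,F_X)$, and the manifest homotopy invariance of each term propagates to the cohomology.

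The main obstacle is precisely the construction and exactness of this Gersten resolution, which is where the transfer structure is genuinely needed. Concretely one reduces to a smooth Henselian local scheme $S$ with closed point $s$ and must show that the cohomology-with-supports groups $H^i_s(S,F_S)$ vanish outside the expected degree; this vanishing rests on Voevodsky's geometric input, namely the homotopy invariance of $F$ combined with the standard-triple ``moving'' technique that uses correspondences to trivialize the relevant classes near $s$. Essentially all of the first part funnels through this point.

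For the representability \eqref{voe} I would argue formally. Realize $\DMeff$ as the full subcategory of $\A^1$-local complexes in $D(\NST)$, with $L^{\A^1}$ the left adjoint reflection and $M(X)=L^{\A^1}\Z_{\tr}(X)$. A complex is $\A^1$-local exactly when its cohomology sheaves are strictly homotopy invariant, so the first part shows that $F\in\HI_\Nis$, placed in degree zero, is already $\A^1$-local; hence $L^{\A^1}F=F$ and, by the reflection adjunction,
\[ \Hom_{\DMeff}(M(X),L^{\A^1}F[i])\simeq\Hom_{D(\NST)}(\Z_{\tr}(X),F[i])\simeq\Ext^i_{\NST}(\Z_{\tr}(X),F). \]
It then remains to identify the last group with $H^i_\Nis(X,F_X)$: choosing an injective resolution $F\to I^\bullet$ in $\NST$, the group is $H^i(I^\bullet(X))$ by the Yoneda identity $\Hom_{\NST}(\Z_{\tr}(X),-)=(-)(X)$, and this equals $H^i_\Nis(X,F_X)$ because injective objects of $\NST$ have Nisnevich-acyclic restriction to each small site $X_\Nis$. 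This closes the argument, and naturality in $X$ is visible at every step.
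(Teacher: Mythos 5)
Your proposal is correct and follows essentially the same route as the paper, which states this result without proof as a citation of Voevodsky \cite{voetri}: the cited argument is exactly your two-step scheme, namely strict $\A^1$-invariance via the injectivity/standard-triple machinery and the Gersten--Cousin resolution over a perfect field, followed by the formal identification $\Hom_{\DMeff}(M(X),F[i])\simeq \Ext^i_{\NST}(\Ztr(X),F)\simeq H^i_\Nis(X,F_X)$ using that $F$ is $\A^1$-local and that injectives in $\NST$ restrict acyclically to the small Nisnevich sites. You also correctly locate the genuinely hard input (vanishing of cohomology with supports via the moving technique with correspondences), so the sketch is faithful to the classical proof.
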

\medbreak

Notice that there are interesting and important objects of $\NST$ which do not belong to $\HI_\Nis$. Such examples are given by the sheaves $\Omega^i$ of (absolute or relative) differential forms, and the $p$-typical de Rham-Witt sheaves $W_m\Omega^i$ of Bloch-Deligne-Illusie, and smooth commutative $k$-groups schemes with a unipotent part (seen as objects of $\NST$),  and the complexes $R\epsilon_* \Z/p^r(n)$ in case $\ch(k)=p>0$, where $\Z/p^r(n)$ is the \'etale motivic complex of weight $n$ with $\Z/p^r$ coefficients and
$\epsilon$ is the change of site functor from the \'etale to the Nisnevich topology. For such examples, \eqref{voe} fails to hold since $\pi_X:X\times\A^1\to X$ induces an isomorphism $M(X\times\A^1)\simeq M(X)$ in $\DMeff$ but the maps induced on cohomology of those sheaves are not isomorphism.

The category $\RSC_{\Nis}$ of reciprocity sheaves is a full abelian subcategory of $\NST$ that contains $\HI_{\Nis}$ as well as the non-$\A^1$-invariant objects mentioned above. Heuristically, its objects satisfy the property that for any $X\in \Sm$, each section $a\in F(X)$ ``has bounded ramification at infinity'' and the objects of $\HI_\Nis$ are special reciprocity sheaves with the property that every section $a\in F(X)$ has ``tame'' ramification at infinity\footnote{This heuristic viewpoint is manifested in \cite[Th. 2]{rs}.}.
Slightly more exotic examples of reciprocity sheaves are given by the sheaves ${\rm Conn}^1$ (in case $\ch(k)=0$), whose sections over $X$ are rank $1$-connections, or ${\rm Lisse}^1_\ell$ (in case $\ch(k)=p>0$), whose sections on $X$ are the lisse $\overline{\mathbb{Q}}_\ell$-sheaves of rank $1$. Since $\RSC_{\Nis}$ is an abelian category equipped with a lax symmetric monoidal structure by \cite{rsy}, many more interesting examples can be manufactured by taking kernels, quotients and tensor products (see \cite[\S11.1]{brs} for more examples).
\medbreak

The main purpose of this article is to establish the formula \eqref{voe} for all $F\in \RSC_\Nis$ in a new category which enlarges $\DMeff$ (see \eqref{CItspNis-logDMintro}). It is the triangulated category $\lDMeff$ of \emph{logarithmic motives} introduced  by Binda, Park and \O stv\ae r in \cite{bpo}. 
Let $\lSm$ be the category of log smooth and separated $\fs$ log schemes of finite type over $k$ and $\lCor$ be the category with the same objects as $\lSm$ and whose morphisms are log finite correspondences (see \cite[Def. 2.1.1]{bpo}).
Let $\lPST$ be the category of additive presheaves of abelian groups on $\lCor$ and $\dNST\subset \lPST$ be the full subcategory consisting of those 
$\sF$ whose restrictions to $\lSm$ are dividing Nisnevich sheaves (see \cite[Def. 3.1.4]{bpo}). It is shown in \cite[\S 4 and Pr. 4.6.6]{bpo} that $\dNST$ is a Grothendieck abelian category, and $\lDMeff$ is defined as 
the localization of the derived category $D(\dNST)$ of complexes in $\dNST$ with respect to a \emph{$\bcube$-weak equivalence}, 
where $\bcube$ is $\P^1$ with the log-structure associated to the effective divisor $\infty\hookrightarrow \P^1$ (see \cite[Def. 5.2.1]{bpo}\footnote{
In fact it is defined in loc.cite. as the localization of the homotopy category of complexes in $\dNST$ with respect to a $\bcube$-local descent model structure.}). 
It is equipped with a functor $M: \lSm \to \lDMeff$ associating the logarithmic motive $M(\fX)$ of $\fX\in \lSm$. 

Now we can state the main result of this paper.

\begin{thm}\label{CItspNis-logDMintro}(Theorems \ref{CItspNis-logDM} and \ref{Log-ffexact})
There exists an exact and fully faithful functor
\eq{LogRSCintro}{ \Log : \RSC_\Nis \to \dNST\;:\; F\to \Flog=\Log(F) }
such that $\Flog$ for $F\in \RSCNis$ is strictly $\bcube$-invariant in the sense \cite[Def. 5.2.2]{bpo}\footnote{It is an logarithmic analogue of Voevodsky's strict $\A^1$-invariance.}. For $X\in \Sm$ we have a natural isomorphism
\eq{CItspNis-logDMintro}{ H^i_\Nis(X,F_X)\simeq \Hom_{\lDMeff}(M(X,\triv),L^{\bcube}\Flog[i]),}
where $L^{\bcube} : D(\dNST)\to \lDMeff$ is the localization functor and
$(X, \triv)$ is the log-scheme with the trivial log-structure.

\end{thm}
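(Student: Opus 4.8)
The plan is to factor $\Log$ through the category of modulus sheaves, where the reciprocity condition is encoded by a genuine cube-invariance, and then to compare the modulus framework (with reduced boundary) with the logarithmic framework $\dNST$. Recall that every $F\in\RSCNis$ admits a canonical lift to a semi-pure cube-invariant Nisnevich modulus sheaf $\tilde F\in\CItspNis$, characterized by $\ulomegaCI(\tilde F)\simeq F$; concretely $\tilde F(\overline{M},M^\infty)$ records the sections of $F$ on the interior whose ramification at infinity is bounded by the modulus $M^\infty$. I would use this lift as the carrier of all the boundary information that the log structure must see, exploiting that $\bcube=(\P^1,\infty)$ and $\cube=(\P^1,\infty)$ have the same reduced boundary, so that $\bcube$-invariance on the log side corresponds to $\cube$-invariance on the modulus side.

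First I would construct $\Flog$ by transporting $\tilde F$ along the comparison between modulus pairs with reduced boundary and $\fs$ log smooth schemes: a log smooth $\fX$ with (normal crossing) boundary $\partial\fX$ determines, after choosing a compactification, a modulus pair whose reduced boundary matches $\partial\fX$, and I set $\Flog(\fX)$ to be $\tilde F$ evaluated there. The two nontrivial points are (i) independence of the chosen compactification and of admissible log blow-ups, which I would derive from the modulus descent and $\cube$-invariance of $\tilde F$, and (ii) that the resulting presheaf satisfies dividing Nisnevich descent, i.e. actually lies in $\dNST$. Exactness of $\Log$ is then inherited from exactness of $F\mapsto\tilde F$ and of the comparison. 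For full faithfulness I would use that $\Flog$ restricted to schemes with trivial log structure recovers $F$, giving a retraction on $\Hom$-groups, together with semi-purity, which forces any morphism $\Flog\to\Glog$ to be determined by its restriction to the trivial-log locus.

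For the cohomology formula I would first reduce to a purely sheaf-theoretic statement, exactly as in the $\A^1$-invariant case underlying Theorem \ref{Voevodsky}. Once $\Flog$ is strictly $\bcube$-invariant, the object $L^{\bcube}\Flog$ is $\bcube$-local, so for every $\fX\in\lSm$ the localization functor gives
\[ \Hom_{\lDMeff}(M(\fX),L^{\bcube}\Flog[i])\simeq\Hom_{D(\dNST)}(\Ztr(\fX),\Flog[i])\simeq H^i_{\dNis}(\fX,\Flog). \]
Applying this with $\fX=(X,\triv)$, the formula \eqref{CItspNis-logDMintro} reduces to the identification $H^i_{\dNis}((X,\triv),\Flog)\simeq H^i_\Nis(X,F_X)$. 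For this I would observe that a scheme with trivial log structure admits no nontrivial admissible log blow-ups, so the dividing Nisnevich topology restricts on the trivial-log locus to the ordinary Nisnevich topology, while $\Flog$ restricts to $F_X$ there by construction; a cohomological comparison then matches the two cohomologies.

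The main obstacle is the strict $\bcube$-invariance of $\Flog$, which is the logarithmic counterpart of Voevodsky's strict $\A^1$-invariance and is genuinely deep. I would deduce it from the strict $\cube$-invariance of the modulus lift $\tilde F$ — the statement that the projection $\overline{M}\otimes\cube\to\overline{M}$ induces isomorphisms on the Nisnevich cohomology of $\tilde F$ over modulus pairs, which is where the full force of the reciprocity theory enters — transported along the modulus-to-log comparison. The delicate points are keeping track of the reduced-versus-thickened boundary when passing cohomology through the comparison, and checking that the relevant higher direct images along $\bcube$-projections vanish after dividing Nisnevich sheafification. Controlling these, rather than the formal reduction above, is where the real work lies.
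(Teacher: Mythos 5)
Your overall architecture does match the paper's: lift $F$ to $\tF=\ulomegaCI F\in\CItspNis$, set $\Flog(\fX)=\tF(X,\pX)$ for $\fX\in\SmlSm$ (note that no compactification choice arises, since $\ulMCor$ contains non-proper modulus pairs, so your point (i) is largely a non-issue), compare dividing-Nisnevich with Nisnevich cohomology, and import strict $\cube$-invariance from the modulus theory. But you have skipped the central difficulty: \emph{transfers}. By \cite[Def.\ 2.1.1]{bpo}, a log correspondence $\alpha\in\lCor(\fY,\fX)$ is only an admissible finite correspondence $(Y,n\cdot\pY)\to(X,\pX)$ for some $n>0$, so $\alpha^*$ a priori lands in $\tF(Y,n\cdot\pY)$, not in $\tF(Y,\pY)=\Flog(\fY)$. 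Unless one proves that pullback along such a correspondence carries sections with reduced modulus to sections with reduced modulus --- the ``logarithmic'' property of Definition \ref{def:CItsp-log}, established for all of $\CItspNis$ in Theorem \ref{thm:CItsp-log} --- the assignment $\fX\mapsto\Flog(\fX)$ is not a presheaf on $\lCor$ at all, and there is no object of $\dNST$ to feed into the rest of the argument. This is the deepest new input of the paper: its proof occupies \S\ref{logcoh} and rests on the Zariski--Nagata-type purity with reduced modulus of \S\ref{puritymodulus} (Corollary \ref{cor:purityreducedmodulus}, Lemma \ref{lem:contraction}) together with the higher local symbols of \S\ref{HLS} (via Proposition \ref{lem2;diagonal}). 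Nothing in your proposal supplies or even names this mechanism; your ``reduced-versus-thickened boundary'' remark appears only in the cohomological comparison, whereas the problem already arises at the presheaf level.

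Two further steps are asserted where the paper has to work. First, exactness: $\ulomegaCI$ is a right adjoint and is \emph{not} exact as a functor to $\uMNST$, so exactness of $\Log$ is not ``inherited from exactness of $F\mapsto\tF$''; the paper proves it via Corollary \ref{cor2:purityreducedmodulus}(1)--(2), i.e.\ a surjection in $\RSCNis$ remains surjective on sections over reduced henselian local modulus pairs, which again uses the purity theorem (Cousin complexes plus the retraction of Lemma \ref{lem:F-1splitting}). Second, strict $\bcube$-invariance requires $H^i_{\dNis}(\fX,\Flog)\simeq H^i_\Nis(X,F_{(X,\pX)})$ for \emph{every} $\fX\in\SmlSm$ (in particular $\fX\otimes\bcube$, which has nontrivial log structure), not merely on the trivial-log locus; the dividing colimit of \cite[Th.\ 5.1.8]{bpo} becomes constant only because of the blow-up invariance of logarithmic cohomology (Theorem \ref{thm:CItsp-loginv}), itself a substantial induction carried out in \S\ref{invariance-bu} using \cite{brs}, and your phrase ``derive from modulus descent and $\cube$-invariance'' does not substitute for it. By contrast, your treatment of full faithfulness (restriction to trivial log structure recovers $F$, with semi-purity forcing compatibility) and your formal reduction of \eqref{CItspNis-logDMintro} via localization and $\bcube$-locality do coincide with the paper's proof of Theorem \ref{Log-ffexact} and its use of \cite[Pr.\ 5.2.8]{bpo}, and are correct as stated.
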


We remark (see Remark \ref{rem:logcohRSC}) that for $F=\Omega^i$, $\Flog(\fX)$ for $\fX\in \lSm$ whose underlying scheme is smooth, agrees 
with the sheaf of logarithmic differential forms of $\fX$ at least assuming $\ch(k)=0$
\footnote{The assumption is necessary to use \cite[Cor. 6.8]{rs} proved in case $\ch(k)=0$. We expect that it is removed by using a forthcoming work of K. R\"ulling extending \cite[Cor. 6.8]{rs} to the case $\ch(k)>0$.}.
\medbreak

We now explain the organization of the paper.

In \S \ref{preliminaries} we discuss some preliminaries and fix the notation. 
We recall the definitions and basic properties of \emph{modulus (pre)sheaves with transfers} from \cite{kmsyI}, \cite{kmsyII}, \cite{ksyII} and \cite{shuji}. 
It is a generalization of Voevodsky's (pre)sheaves with transfers to a version with modulus. 
The category $\uMCor$ of \emph{modulus correspondences} is introduced.
Its objects are pairs $\sX = (\ol{X}, D)$, where $\ol{X}$ is a separated scheme of finite type over $k$ equipped with an effective Cartier divisor $D$ such that the \emph{interior} $\ol{X}-D = X$ is smooth. The morphisms are finite correspondences on the interiors satisfying some admissibility and a properness condition. 
Let $\uMPST$ be the category of additive presheaves of abelian groups on $\uMCor$. A full subcategory $\uMNST\subset \uMPST$ of \emph{Nisnevich sheaves} is defined and there is a functor (see \S \ref{preliminaries}\eqref{omegaCI})
\[\ulomegaCI : \RSC_\Nis \to \uMNST.\]
For every $F\in \RSC_\Nis$ and $X\in \Sm$, it provides an exhaustive filtration on the group $F(X)$ of sections over $X$ which measures depth of ramification along a boundary of a partial compactification of $X$:
For $(\Xb,D)\in \uMCor$ with $\Xb-D=X$, we get
the subgroups $\tF(\Xb,D)\subset F(X)$ with $\tF=\ulomegaCI F$ such that $\tF(\Xb,D_1)\subset \tF(\Xb,D_2)$ if $D_1\leq D_2$.

In \S \ref{puritymodulus} we prove as a key technical input an analogue of Zariski-Nagata's purity theorem (\cite[X 3.4]{SGA2}) for $\tF(\Xb,D)$ as above. It asserts the exactness of the sequence
\[ 0\to \tF(\Xb,D) \to F(X) \to 
\underset{\xi\in \cd D 0}{\bigoplus} \frac{F(\hen {\Xb} \xi-\xi)}{\tF(\hen {\Xb} \xi,\xi)},\]
in case $\Xb\in \Sm$ and $D$ is reduced simple normal crossing divisor,
where $\cd D 0$ is the set of the irreducible components of $D$ and 
$\hen {\Xb} \xi$ is the henselization of $\Xb$ at $\xi$.
In \cite{rs-RSCpairing}, this result is generalized to the case where $D$ may not be reduced under the assumption that $\Xb$ admits a smooth compactification.

In \S \ref{HLS} we review \emph{higher local symbols} for reciprocity sheaves constructed in \cite{rs-RSCpairing}.
It is an effective tool with which one can decide when a given element of $F(X)$ with $F\in \RSC_\Nis$ and $X\in \Sm$ belongs to $\tF(\Xb,D)$ as above. 
The construction of the pairing depends on pushforward maps for cohomology of reciprocity sheaves constructed in \cite{brs} (which means that Theorem \ref{CItspNis-logDMintro} depends on the result of \cite{brs}).

In \S \ref{logcoh}, we prove the following result:
Let $\uMCorfinls$ be the subcategory of $\uMCor$ whose objects are pairs $(X,D)$ such that $X\in \Sm$ and the reduced parts $D_{\red}$ of $D$ is a SNCD on $X$ and whose morphisms are modulus correspondences satisfying a finiteness conditions
instead of the properness condition (see \S \ref{preliminaries}\eqref{MCorfin}). Then, for $F\in \RSC_\Nis$, the association
\[ \tF^{\log}: (X,D) \to \ulomegaCI F (X,D_{\red})\]
gives a presheaf on $\uMCorfinls$, which gives rise to a cohomology theory
$H^i_{\log}(-,\tF^{\log})$ on $\uMCorfinls$, called  
\emph{the $i$-th logarithmic cohomology with coefficient $F$} (see Definition \ref{def:logcoh}). The higher local symbols for $F$ plays a fundamental role in the proof of the result .

In \S \ref{invariance-bu}, we prove the invariance of logarithmic cohomology under blowups: Let $\Lfinls$ be the subcategory of $\uMCorfinls$ whose objects are the same as $\uMCorfinls$ and whose morphisms are those $\rho: (Y,E) \to (X,D)$ where $E=\rho^* D$ and $\rho$ are induced by blowups of $X$ in smooth centers $Z\subset D$ which are normal crossing to $D$ (see the beginning of the section).
Then, for $F\in \RSC_\Nis$ and $\rho:\sY\to \sX$ in $\Lfinls$, we have
\[ \rho^*: \Hlog i(\sX,F) \cong \Hlog i(\sY,F) \qfor \forall i\geq 0.\]

In \S \ref{log-mottives}, we prove Theorem \ref{CItspNis-logDMintro}, which is 
a formal consequence of the theorems in \S \ref{logcoh} and \S \ref{invariance-bu}.

\bigskip\noindent
{\bf Acknowledgements.} The author would like to thank Kay R\"ulling and F. Binda for many valuable discussions and comments.
He is also grateful to A. Merici to whom he owes crucial ideas for \S\ref{puritymodulus}.

\section{Preliminaries}\label{preliminaries}

We fix once and for all a perfect base field $k$.
In this section we recall the definitions and basic properties of modulus sheaves with transfers from \cite{kmsyI} and \cite{shuji} (see also \cite{ksyII} for a more detailed summary).
\begin{enumerate}
    \item Denote by $\Sch$ the category of separated schemes of finite type over $k$ and by $\Sm$ the full subcategory of smooth schemes. For $X, Y \in \Sm$, an integral closed subscheme of $X\times Y$ that is finite and surjective over a connected component of $X$ is called a \emph{prime correspondence from $X$ to $Y$}. 
The category $\Cor$ of finite correspondences has the same objects as $\Sm$, and for $X, Y \in \Sm$, $\Cor(X,Y)$ is the free abelian group on the set of all prime correspondences from $X$ to $Y$ (see \cite{voetri}). We consider $\Sm$ as a subcategory of $\Cor$ by regarding a morphism in $\Sm$ as its graph in $\Cor$. 

Let $\PST$ be the category of additive presheaves of abelian groups on $\Cor$ whose objects are called \emph{presheaves with transfers}. 
Let $\NST\subseteq \PST$ be the category of Nisnevich sheaves with transfers and let 
\[\aVNis :\PST\to \NST\]
be Voevodsky's Nisnevich sheafification functor, which is an exact left adjoint to the inclusion $\NST\to \PST$.
Let $\HI\subseteq \PST$ be the category of $\A^1$-invariant presheaves and put  $\HI_\Nis=\HI\cap \NST\subseteq \NST$. 

\item\label{Corpro}
Let $\tSm$ be the category of $k$-schemes $X$ which are essentially smooth over $k$, i.e. $X$ is a limit $\lim_{i \in I} X_i$ over a filtered set $I$, 
where $X_i $ is smooth over $k$ and all transition maps are \'etale.
Note $\Spec K\in \tSm$ for a function field $K$ over $k$ thanks to the assumption that $k$ is perfect.
We define $\Cor^{\rm pro}$ whose objects are the same as $\tSm$ and morphisms are defined as \cite[Def. 2,2]{rs}.
We extend $F\in \PST$ to a presheaf on $\Cor^{\rm pro}$ by
$F(X) := \colim_{i \in I} F(X_i)$ for $X$ as above.

\item\label{uMCor}
 We recall the definition of the category $\ulMCor$ from \cite[Definition 1.3.1]{kmsyI}. A pair $\sX = (X,D)$ of $X \in \Sch$ and an effective Cartier divisor $D$ on $X$ is called a \emph{modulus pair} if $M - |M_\infty| \in \Sm$.  
Let $\sX=(X,D_X)$, $\sY=(Y,D_Y)$ be modulus pairs and $\Gamma \in \Cor(X-D_X,Y-D_Y)$ be a prime correspondence. Let $\overline{\Gamma} \subseteq X \times Y$ be the closure of $\Gamma$, and let $\overline{\Gamma}^N\to X\times Y$ be the normalization. We say $\Gamma$ is \emph{admissible} (resp. \emph{left proper}) if $(D_X)_{\overline{\Gamma}^N}\geq (D_Y)_{\overline{\Gamma}^N}$
(resp. if $\overline{\Gamma}$ is proper over $X$). Let $\ulMCor(\sX, \sY)$ be the
subgroup of $\Cor(X-D_X,Y-D_Y)$ generated by all admissible left proper prime correspondences. The category $\ulMCor$ has modulus pairs as objects and $\ulMCor(\sX, \sY)$ as the group of morphisms from $\sX$ to $\sY$.

\item
Let $\ulMCorls\subset \ulMCor$ be the full subcategory of $(X,D)\in \ulMCor$ with $X\in \Sm$ and $|D|$ a normal crossing divisor on $X$. 

\item\label{MCorfin}
Let $\ulMCorfin\subset \ulMCor$ be the full subcategory of the same objects such that $\ulMCorfin(\sX, \sY)$ are generated by all admissible \emph{finite} prime correspondences, where finite prime correspondences are defined by replacing the left properness in \eqref{uMCor} by finiteness.
We also define $\ulMCorfinls\subset \ulMCorfinls\cap \ulMCorls$.

\item
There is a canonical pair of adjoint functors $\lambda\dashv \ulomega$:\[
\lambda: \Cor\to \ulMCor\quad X\mapsto (X,\emptyset),\]
\[
\ulomega: \ulMCor\to\Cor\quad (X,D)\mapsto X-|D|,
\]

\item
There is a full subcategory $\mathbf{MCor}\subset \ulMCor$ consisting of 
\emph{proper modulus pairs}, where
a modulus pair $(X,D)$ is \emph{proper} if $X$ is proper.
Let $\tau:\MCor\hookrightarrow \ulMCor$ be the inclusion functor and $\omega=\ulomega\tau$.


\item 
Let $\MPST$ (resp. $\uMPST$) be the category of additive presheaves of abelian groups on $\MCor$ (resp. $\uMCor$) whose objects are called \emph{modulus presheaves with transfers}. 
For $\sX \in \MCor$, let $\Ztr(\sX)=\ulMCor(-,\sX)$ be the representable object of $\ulMPST$. We sometime write $\sX$ for $\Ztr(\sX)$ for simplicity.

\item\label{ulMCorpro}
By the same manner as \eqref{Corpro}, the category $\ulMCor^{\rm pro}$ is defined
and $F\in \ulMPST$ is extended to a presheaf on $\ulMCor^{\rm pro}$
(see \cite[\S 3.7]{rs}).

\item\label{ulomega} The adjunction $\lambda\dashv \ulomega$ induce a string of $4$ adjoint functors $(\lambda_!=\ulomega^!,\lambda^*=\ulomega_!,\lambda_*=\ulomega^*,\ulomega_*)$ (see \cite[Pr. 2.3.1]{kmsyI}):
\[\ulMPST\begin{smallmatrix}\ulomega^!\\\longleftarrow\\\ulomega_!\\\longrightarrow\\\ulomega^*\\\longleftarrow\\\ulomega_*\\ \longrightarrow\end{smallmatrix}\PST \]
where $\ulomega_!,\ulomega_*$ are localisations and $\ulomega^!$ and $\ulomega^*$ are fully faithful.


\item\label{tau}
The functor $\tau$ yields a string of $3$ adjoint functors $(\tau_!,\tau^*,\tau_*)$:
\[\MPST\begin{smallmatrix}\tau_!\\\longrightarrow\\\tau^*\\\longleftarrow\\\tau_*\\ \longrightarrow\end{smallmatrix}\ulMPST \]
where $\tau_!,\tau_*$ are fully faithful and $\tau^*$ is a localisation; $\tau_!$ 
has a pro-left adjoint $\tau^!$, hence is exact (see \cite[Pr. 2.4.1]{kmsyI}). We will denote by $\ulMPST^{\tau}$ the essential image of $\tau_!$ in $\ulMPST$. 

\item The modulus pair $\bcube:=(\P^1,\infty)$ has an interval structure induced by the one of $\A^1$ (see \cite[Lem. 2.1.3]{ksyII}). We say $F \in \MPST$ is $\bcube$-invariant if $p^* : F(\sX) \to F(\sX \otimes \bcube)$ is an isomorphism for any $\sX \in \MCor$, where $p : \sX \otimes \bcube \to \sX$ is the projection.
Let $\CI$ be the full subcategory of $\MPST$ consisting of all $\bcube$-invariant objects and $\CIt\subset \ulMPST$ be the essential image of $\CI$ under $\tau_!$.

\item\label{CI}
Recall from \cite[Theorem 2.1.8]{ksyII} that $\CI$ is a Serre subcategory of $\MPST$,
and that the inclusion functor $i^\bcube : \CI \to \MPST$
has a left adjoint $h_0^\bcube$ and a right adjoint $h^0_\bcube$ given 
for $F \in \MPST$ and $\sX \in \MCor$ by
\begin{align*} 
&h_0^\bcube(F)(\sX)
=\Coker(i_0^* - i_1^* : F(\sX \otimes \bcube) \to F(\sX)),
\\
\notag
&h^0_\bcube(F)(\sX)=\Hom(h_0^\bcube(\sX), F).
\end{align*}
For $\sX\in \MCor$, we write $h_0^\bcube(\sX)=h_0^\bcube(\Ztr(\sX))\in \CI$, and 
by abuse of notation, we let $h_0^\bcube(\sX)$ denote also for $\tau_!h_0^\bcube(\sX)\in \CIt$.

\item For $F\in \ulMPST$ and $\sX=(X,D)\in \ulMCor$, write $F_{\sX}$ for the presheaf
on the small \'etale site $X_{\et}$ over $X$ given by $U\to F(\sX_U)$ for $U\to X$ \'etale, where $\sX_U=(U,D_{|U})\in \ulMCor$. We say $F$ is a Nisnevich sheaf if so is $F_{\sX}$ for all $\sX\in \ulMCor$ (see \cite[Section 3]{kmsyI}).
We write $\ulMNST\subset \ulMPST$
for the full subcategory of Nisnevich sheaves and put
\[\MNST^\tau=\ulMNST\cap \MPST^\tau,\quad \CIt_\Nis =\CIt\cap \MNST^\tau.\]
By \cite[Prop. 3.5.3]{kmsyI} and \cite[Theorem 2]{kmsyII}, the inclusion functor
$i_\Nis: \ulMNST\to \ulMPST$ has an exact left adjoint $\ulaNis$ such that
$\ulaNis(\MPST^\tau)\subset \MNST^\tau$.
The functor $\ulaNis$ has the following description: For $F\in \ulMPST$ and $\sY\in \ulMCor$, let $F_{\sY,\Nis}$ be the usual Nisnevich sheafification of $F_{\sY}$. 
Then, for $(X,D)\in \ulMCor$ we have\[
\ulaNis F (X,D) = \colim_{f:Y\to X} F_{(Y,f^*D),\Nis}(Y)
\]
where the colimit is taken over all proper maps $f:Y\to X$ that induce isomorphisms $Y-|f^*D|\xrightarrow{\sim}X-|D|$.

\item\label{NST} By \cite[Pr. 6.2.1]{kmsyII}, $\ulomega^*$ and $\ulomega_!$ from \eqref{ulomega} respect $\ulMNST$ and $\NST$ and induce a pair of adjoint functors (which for simplicity we write $\ulomega_!$ and $\ulomega^*$). Moreover, we have
\[\ulomega_!\ulaNis=\aVNis\ulomega_!.\]
By \cite[Lem. 2.3.1]{ksyII} and \cite[Pr. 6.2.1a)]{kmsyII}, for $F\in \PST$, we have $F\in \HI$ (resp $F\in \HI_\Nis$) if and only if $\ulomega^*F\in \CIt$ (resp $\ulomega^*F\in \CIt_\Nis$).

\item\label{semipure} We say that $F\in \ulMPST$ is \emph{semi-pure} if the unit map
\[ u: F\to \ulomega^*\ulomega_!F \]
is injective. For $F\in \ulMPST$ (resp. $F\in \ulMNST$),  let $F^{sp}\in \ulMPST$ (resp. $F^{sp}\in \ulMNST$) be the image of
$F\to \ulomega^*\ulomega_! F$ (called the semi-purification of $F$. See \cite[Lem. 1.30]{shuji}).
For $F\in \ulMPST$  we have
\[ \ulaNis(F^{sp}) \simeq (\ulaNis F)^{sp}.\]
This follows from the fact that $\ulaNis$ is exact and commutes with 
$\ulomega^*\ulomega_!$.
For $F\in \MPST^{\tau}$ we have $F^{sp}\in \MPST^{\tau}$ since $\tau$ is exact and $\ulomega^*\ulomega_!\tau_!=\tau_! \omega^*\omega_!$.

\item\label{CItspNis} 
Let $\CItsp\subset\CIt$ be the full subcategory of semipure objects and
consider the full subcategory
\[\CItsp_\Nis =\CItsp\cap \MNST^\tau \subset \CIt_\Nis.\]
By \cite[Th. 0.1 and 0.4]{shuji}, we have $\ulaNis(\CItsp)\subset \CItsp_\Nis$.



\item\label{RSC} 
We write  $\RSC\subseteq \PST$ for the essential image of $\CI$ under
$\omega_!$ (which is the same as the essential image of $\CItsp$ under
$\ulomega_!$ since $\omega_!=\ulomega_!\tau_!$ and $\ulomega_!F=\ulomega_!F^{sp}$).
Put $\RSC_{\Nis}=\RSC\cap \NST$.
The objects of $\RSC$ (resp. $\RSC_\Nis$) are called reciprocity presheaves 
(resp. sheaves). By \cite[Th. 0.1]{shuji}, we have
\begin{equation}\label{aVNisRSC}
\aVNis(\RSC)\subset \RSC_\Nis.
\end{equation}
We have $\HI\subseteq \RSC$ and it contains also smooth commutative group schemes (which may have non-trivial unipotent part), and the sheaf $\Omega^i$ of K\"ahler differentials, and the de Rham-Witt sheaves $W\Omega^i$ (see \cite{ksyI} and \cite{ksyII}).

\item
$\NST$ is a Grothendieck abelian category by \cite[Lem. 3.1.6]{voetri} and 
we can make $\RSCNis$ its full sub-abelian category as follows:
We define the kernel (resp. cokernel) of a map $\phi: F\to G$ in $\RSCNis$ to be that of $\phi$ as a map in $\NST$. Here we need \eqref{aVNisRSC} to ensure
that the cokernel of $\phi$ in $\NST$ stays in $\RSCNis$.
By definition, a sequence $0\to F\to G\to H\to 0$ is exact in $\RSCNis$ if and only if it is exact in $\NST$.

\item\label{omegaCI}
\def\hF{\hat{F}}
By \cite[Prop. 2.3.7]{ksyII} we have a pair of adjoint functors:
\begin{equation}\label{omegaCIadjoint0}
\CI\begin{smallmatrix}\omega^{\CI}\\\longleftarrow\\\omega_!\\\longrightarrow
\end{smallmatrix}\RSC, 
\end{equation}
where $\omega^{\CI}=h^0_\bcube\omega^*$ and it is fully faithful.
It induces a pair of adjoint functors:
\begin{equation}\label{omegaCIadjoint}
\CIt\begin{smallmatrix}\ulomega^{\CI}\\\longleftarrow\\\ulomega_!\\\longrightarrow
\end{smallmatrix}\RSC, 
\end{equation}
where $\ulomega^{\CI}=\tau_!h^0_\bcube\omega^*$ and it is fully faithful.
Indeed, let $F=\tau_!\hF$ for $\hF\in \CI$ and $G \in \RSC$. 
In view of \eqref{CI} and the exactness and full faithfulness of $\tau_!$, we have
\begin{multline*}
    \Hom_{\CIt}(F,\tau_!h^0_\bcube\omega^* G) \simeq  
\Hom_{\CI}(\hF,h^0_\bcube\omega^*G) \simeq \\
    \Hom_{\MPST}(\hF,\omega^*G) \simeq
\Hom_{\ulMPST}(\tau_! \hF,\ulomega^*G) \simeq \Hom_{\RSC}(\ulomega_! F,G).
\end{multline*}
In view of \eqref{NST}, \eqref{omegaCIadjoint} induce pair of adjoint functors:
\begin{equation}\label{omegaCIadjointNis}
\CItsp_\Nis\begin{smallmatrix}\ulomega^{\CI}\\\longleftarrow\\\ulomega_!\\\longrightarrow \end{smallmatrix}\RSC_\Nis, 
\end{equation}

\end{enumerate}
\bigskip

\section{Purity with reduced modulus}\label{puritymodulus}

For $F\in \ulMPST$, we put
\[ \Fc=\Ker\big(\uHom_{\ulMPST}((\P^1-0,\infty),F)\rmapo{i_1^*} F\big),\]
\[ \Fcgm=\Ker\big(\uHom_{\ulMPST}((\P^1,0+\infty),F)\rmapo{i_1^*} F\big),\]
Note that if $F\in \CItspNis$, we have for $\sX\in \uMCor$
\eq{eq;Fgm}{
\begin{aligned}
&\Fcgm(\sX) = \Hom_{\ulMPST}(\hcubespNis(\P^1,0+\infty)^0,\uHom_{\ulMPST}(\Ztr(\sX),F)),\\
&\Fc(\sX) =\colim_n \Hom_{\ulMPST}(\hcubespNis(\P^1,n\cdot 0+\infty)^0,\uHom_{\ulMPST}(\Ztr(\sX),F)),\end{aligned}}
where 
\[\hcubespNis(\P^1,n\cdot 0+\infty)^0=\Coker\big(\Z=\Ztr(\Spec k,\emptyset) 
\rmapo{i_1} \hcubespNis(\P^1,n\cdot 0+\infty)\big).\]

\begin{defn}\label{def;tauu}
For $e_1,\dots,e_r\in \{0,1\}$, put
\[ \tauu {e_1,\dots,e_r}F =\tauu {e_r}\cdots \tauu {e_1}F,\]
where
\[\tauu 0 F=\Fc\qaq  \tauu 1 F = \Fc/\Fcgm.\]

\end{defn}

The existence of retractions in the following lemma was suggested by A. Merici.
It implies $\tauu {e_1,\dots,e_r}F\in \CItspNis$ if $F\in\CItspNis$.

\begin{lemma}\label{lem:F-1splitting}
For $F\in \CItspNis$, the inclusion $\Fcgm \to \Fc$ admits a retraction
$s_F : \Fc\to \Fcgm$ such that for any map $\phi: F\to G$ in $\CItspNis$, the following diagram is commutative:
\[\xymatrix{\Fc\ar[r]^{s_F}\ar[d]^{\phi}  & \Fcgm \ar[d]^{\phi} \\
\Gc\ar[r]^{s_F}  & \Gcgm  \\}\]
In particular $\tauu 1 F\in \CItspNis$ if $F\in \CItspNis$.
\end{lemma}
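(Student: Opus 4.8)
The plan is to reduce the statement to a computation on representing objects and then to force all the required identities by semi-purity. Throughout write $M_n=\hcubespNis(\P^1,n\cdot 0+\infty)^0\in\CItspNis$, so that by \eqref{eq;Fgm} one has $\Fcgm(\sX)=\Hom_{\ulMPST}(M_1,\uHom(\Ztr(\sX),F))$ and $\Fc(\sX)=\colim_n\Hom_{\ulMPST}(M_n,\uHom(\Ztr(\sX),F))$, the transitions of the colimit being induced by the modulus-relaxation morphisms $p_n\colon M_{n+1}\to M_n$ (coming from the identity on $\P^1$, admissible since $(n+1)\cdot 0+\infty\ge n\cdot 0+\infty$), while the inclusion $\Fcgm\to\Fc$ is the canonical arrow from the $n=1$ term into the colimit. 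Hence, to produce a retraction $s_F$ natural in $F$ it suffices to construct morphisms $r_n\colon M_1\to M_n$ in $\CItspNis$, \emph{independent of $F$}, with $r_1=\id$ and $p_n r_{n+1}=r_n$; one then sets $s_F=\colim_n r_n^{*}$, and the square of the lemma commutes automatically precisely because the $r_n$ do not involve $F$. Note that these cocycle relations give $\pi_n r_n=\id$ with $\pi_n=p_1\cdots p_{n-1}\colon M_n\to M_1$, so each $r_n$ is a section of the projection $M_n\to M_1$.

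For the construction of $r_n$ I would use the transpose of the $n$-th power map. Let $[n]\colon\P^1\to\P^1$, $s\mapsto s^{n}$, and let $\gamma_n={}^{t}\Gamma_{[n]}$ be the transpose of its graph, regarded as a prime correspondence from $(\P^1,0+\infty)$ to $(\P^1,n\cdot 0+\infty)$. Its closure is $\cong\P^1$ (hence normal) and finite of degree $n$ over the source; the pullback of the source modulus $0+\infty$ along $t=s^{n}$ is $n\cdot 0+n\cdot\infty$, whereas the pullback of the target modulus is $n\cdot 0+\infty$, and $n\cdot 0+n\cdot\infty\ge n\cdot 0+\infty$. Thus $\gamma_n$ is admissible and left proper, i.e. $\gamma_n\in\ulMCor((\P^1,0+\infty),(\P^1,n\cdot 0+\infty))$. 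Applying $\hcubespNis(-)$ and passing to the reduced quotients (legitimate since $[n]$ fixes the unit section $1$) yields $r_n\colon M_1\to M_n$.

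The key point — and the main obstacle — is to verify $r_1=\id$ and $p_n r_{n+1}=r_n$, which I would deduce from semi-purity rather than by an explicit homotopy. Each $M_n$ is semi-pure \eqref{semipure}, so $M_n\hookrightarrow\ulomega^{*}\ulomega_!M_n$, and by the adjunction $\ulomega_!\dashv\ulomega^{*}$ of \eqref{ulomega} the assignment $f\mapsto\ulomega_!f$ gives an injection $\Hom_{\ulMPST}(M_1,M_n)\hookrightarrow\Hom(\ulomega_!M_1,\ulomega_!M_n)$: a morphism into $M_n$ is determined by its image under $\ulomega_!$. Now $\ulomega_!$ forgets the modulus and sends $M_n$ to the reduced motive of the interior; since that interior is $\G_m$ for every $n$, one gets $\ulomega_!M_n\cong\G_m\cong\Z(1)[1]$ independently of $n$, with $\ulomega_!p_n=\ulomega_!\pi_n=\id$. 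Moreover $\ulomega_!r_n=\id$: the transpose of the degree-$n$ power map induces the transfer $\tau$ on $\Z(1)[1]$, and since $[n]$ acts there by multiplication by $n$, the projection formula $M([n])\circ\tau=\deg[n]=n$ forces $\tau=\id$ (torsion-freeness of $\Z(1)[1]$). Hence $\ulomega_!r_1=\ulomega_!(\id)$ and $\ulomega_!(p_n r_{n+1})=\ulomega_!(r_n)$, and by the injectivity above these lift to $r_1=\id$ and $p_n r_{n+1}=r_n$ in $\CItspNis$.

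Assembling $s_F=\colim_n r_n^{*}$ gives the natural retraction: $s_F\iota=\id$ because this already holds after $\ulomega_!$ and $\Fcgm$ is semi-pure. Finally, since $s_F$ splits $\iota$, the object $\tauu 1 F=\Coker(\iota)=\Fc/\Fcgm$ is a direct summand of $\Fc\in\CItspNis$, and $\CItspNis$ is closed under direct summands, so $\tauu 1 F\in\CItspNis$. The one genuinely non-formal input is the \emph{existence} of the sections $r_n$ — equivalently, that the transfer splits $\ulomega_!\pi_n$ — which is exactly what the explicit power correspondence $\gamma_n$ together with its admissibility computation supplies; once it is available, coherence and the retraction property are forced by semi-purity.
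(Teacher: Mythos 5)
Your reduction of the lemma to the construction of $F$-independent sections $r_n\colon M_1\to M_n$ of the pro-system $(M_n,p_n)$, with $r_1=\id$ and $p_nr_{n+1}=r_n$, is sound, and your admissibility computation for $\gamma_n={}^t\Gamma_{[n]}$ is correct. The genuine gap is the parenthetical step ``passing to the reduced quotients (legitimate since $[n]$ fixes the unit section $1$)''. What has to be checked is not that $[n]$ fixes $1$, but that the \emph{transpose} correspondence sends the class of the unit section into $\Z\cdot i_1$ inside $\hcubespNis(\P^1,n\cdot 0+\infty)$, and it does not: $\gamma_n$ sends $[1]$ to the $0$-cycle $\sum_{\zeta^n=1}[\zeta]$, and the class of $\sum_{\zeta^n=1}[\zeta]-n[1]$ is in general nonzero. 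Indeed, by the relative Picard description of $h_0$ of a proper modulus curve (cf. \cite{ry}), this class vanishes only if some rational function $f$ with $\div(f)=\sum_{\zeta^n=1}[\zeta]-n[1]$ satisfies $f\equiv 1$ modulo $n\cdot 0+\infty$; the only candidates are $f=c\,(s^n-1)/(s-1)^n$, the condition at $\infty$ forces $c=1$, and then $f=(-1)^{n+1}(1-s^n)(1-s)^{-n}\not\equiv 1 \bmod s^n$ (already $f(0)=(-1)^{n+1}$, and in $\ch(k)=0$ the coefficient of $s$ is $n\neq 0$), so the obstruction is nonzero for every $n\geq 2$ in characteristic $0$. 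Semi-purification does not remove it, since it does not change sections over $(\Spec k,\emptyset)$. So the raw map $\hcubespNis(\gamma_n)$ simply does not descend to $M_1\to M_n$, and this unipotent discrepancy is exactly the phenomenon the filtration $\Fcgm\subset\Fc$ is designed to detect, so it cannot be waved away.

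The gap is repairable inside your own framework: replace $\hcubespNis(\gamma_n)$ by $\hcubespNis(\gamma_n)-\xi_n\circ\deg$, where $\deg\colon \hcubespNis(\P^1,0+\infty)\to\Z$ is induced by the structure morphism and $\xi_n\in M_n(\Spec k,\emptyset)=\Hom_{\ulMPST}(\Ztr(\Spec k,\emptyset),M_n)$ is the class of $\gamma_n([1])$. The corrected map kills $i_1(\Z)$, hence descends to $\bar r_n\colon M_1\to M_n$, and the correction term dies after applying $\ulomega_!$ to the reduced part, so your rigidity argument goes through verbatim: $\Hom_{\ulMPST}(M_1,M_n)\hookrightarrow\Hom_{\NST}(\ulomega_!M_1,\ulomega_!M_n)\cong\Hom_{\NST}(\G_m,\G_m)\cong\Z$ forces $\bar r_1=\id$ and $p_n\bar r_{n+1}=\bar r_n$ from the computation $\ulomega_!\bar r_n=\id$. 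Two small points you should make explicit there: semi-purity of $M_n$ is not inherited by a mere quotient, but holds because the unit section splits $M_n$ off as a direct summand of the semipure $\hcubespNis(\P^1,n\cdot 0+\infty)$; and your projection-formula argument $n\cdot\tau=n$ needs the torsion-freeness of $\Hom_{\NST}(\G_m,\G_m)\cong\Z$, which is the cited identification. With these patches your argument is complete and is genuinely different from the paper's, which via \eqref{eq;Fgm} simply invokes the splitting established in \cite[Lem. 2.4]{brs} rather than constructing the sections on the representing objects.
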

\begin{proof}
In view of \eqref{eq;Fgm}, this follows from \cite[Lem. 2.4]{brs}.
\end{proof}

\begin{thm}\label{thm;purityM}
Let $F\in \CItspNis$.
Let $\sX=\Spec K\{t_1,\dots,t_n\}$ and $D= \{t_1^{e_1}\cdots t_n^{e_n}=0\}\subset \sX$ with $e_1,\dots,e_n\in \{0,1\}$. 
For a subset $I\subset [1,n]$ let $i_\sH : \sH \hookrightarrow\sX$ be the closed immersion defined by $\{t_i=0\}_{i\in I}$ and 
$D_\sH=\{\underset{ j\in [1,n]-I}{\prod} t_j^{e_j}=0\}\subset \sH$. 
Then 
\begin{equation}\label{eq;lem;purityM}
 R^\nu i_\sH^! F_{(\sX,D)} =0 \qfor \nu\not=q:=|I|,
\end{equation}
and there is an isomorphism
\begin{equation}\label{eq1;lem;purityM}
 (\tauu {e_I} F)_{(\sH,D_\sH)} \simeq R^q i_\sH^! F_{(\sX,D)}\qwith 
e_I=(e_i)_{i\in I}\in \Z_{\geq 0}^q.
\end{equation}
\end{thm}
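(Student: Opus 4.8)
The plan is to induct on $q=|I|$ and reduce to the codimension-one case by factoring $i_\sH$ through a chain of smooth closed immersions. The base case $q=0$ is immediate: then $\sH=\sX$, $\tauu{e_\emptyset}F=F$, $D_\sH=D$ and $i_\sH$ is the identity. For $q\geq1$ write $I=I'\sqcup\{i_0\}$ with $i_0=\max I$, let $b:\sH'\hookrightarrow\sX$ be cut out by $\{t_i=0\}_{i\in I'}$ and $a:\sH\hookrightarrow\sH'$ by $t_{i_0}=0$, so that $i_\sH=b\circ a$ and $Ri_\sH^!=Ra^!\circ Rb^!$. The Grothendieck spectral sequence for this composite reads
\[ E_2^{p,\nu}=R^pa^!\,R^\nu b^!F_{(\sX,D)}\Rightarrow R^{p+\nu}i_\sH^!F_{(\sX,D)}. \]
By the inductive hypothesis applied to $b$ (of codimension $q-1$), the inner term vanishes unless $\nu=q-1$, where it equals $(\tauu{e_{I'}}F)_{(\sH',D_{\sH'})}$; moreover $\tauu{e_{I'}}F\in\CItspNis$ by Lemma~\ref{lem:F-1splitting}, so I may apply the codimension-one case to the coefficient $G:=\tauu{e_{I'}}F$ along $\{t_{i_0}=0\}\subset\sH'$. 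Granting that case, $R^pa^!G_{(\sH',D_{\sH'})}$ vanishes unless $p=1$, where it equals $(\tauu{e_{i_0}}G)_{(\sH,D_\sH)}$; the spectral sequence therefore collapses to the single term $E_2^{1,q-1}$, yielding $R^\nu i_\sH^!F=0$ for $\nu\neq q$ together with $R^qi_\sH^!F\simeq(\tauu{e_{i_0}}\tauu{e_{I'}}F)_{(\sH,D_\sH)}=(\tauu{e_I}F)_{(\sH,D_\sH)}$, as Definition~\ref{def;tauu} demands.

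It remains to treat the singleton case $I=\{i_0\}$, which I would state uniformly for every $G\in\CItspNis$ and every $n$: with $t=t_{i_0}$, $\sH=\{t=0\}$ and $D=\{t^{e}\cdot R=0\}$ (where $R$ is the residual modulus in the remaining coordinates, $R|_\sH=D_\sH$), one has $R^\nu i_\sH^!G_{(\sX,D)}=0$ for $\nu\neq1$ and $R^1i_\sH^!G_{(\sX,D)}\simeq(\tauu eG)_{(\sH,D_\sH)}$. To compute the stalk at a point $x\in\sH$, I would invoke the localization triangle
\[ i_{\sH*}Ri_\sH^!G_{(\sX,D)}\to G_{(\sX,D)}\to Rj_*j^*G_{(\sX,D)} \]
for $j:\sX\setminus\sH\hookrightarrow\sX$. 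Over the henselian local $\sX^h_x$ the Nisnevich cohomology of the sheaf $G_{(\sX,D)}$ degenerates to sections, so the triangle identifies the stalks of $R^\nu i_\sH^!G_{(\sX,D)}$ with the cohomology of the two-term complex $G(\sX^h_x,D)\to G(\sX^h_x\setminus\sH,D)$. Semipurity of $G$ makes this restriction injective --- no nonzero section of $G_{(\sX,D)}$ is supported on $\sH$ --- which gives $R^0i_\sH^!=0$, and the vanishing of $R^{\geq2}i_\sH^!$ will fall out of the same residue computation that evaluates $R^1$.

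The hard part will be to identify the cokernel $R^1i_\sH^!G_{(\sX,D)}\simeq\Coker\big(G(\sX^h_x,D)\to G(\sX^h_x\setminus\sH,D)\big)$ with the contraction $\tauu eG$, and here the two values of $e$ must be matched with the two internal-hom descriptions recorded in \eqref{eq;Fgm}. When $e=0$ the subscheme $\sH$ carries no modulus, the sections over the punctured disk are governed by $\uHom_{\ulMPST}((\P^1-0,\infty),G)$, and the cokernel is $\Gc=\tauu0G$; when $e=1$ the reduced component $\{t=0\}$ of $D$ forces the subtler quotient by $\uHom_{\ulMPST}((\P^1,0+\infty),G)$, producing $\Gc/\Gcgm=\tauu1G$. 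Concretely this is a residue/Gysin isomorphism along $\{t=0\}$ for modulus sheaves, which I would establish through the structure of the $\G_m$-modulus objects and the Gysin formalism of \cite{rs} and \cite{brs}; the delicate points are to verify that the residual modulus $R$ is transported without interference to $D_\sH$, and that the isomorphism is natural in $G$ --- the latter being needed both to splice the codimension-one steps through the spectral sequence above and to make the identification compatible with the retraction $s_G$ of Lemma~\ref{lem:F-1splitting} that keeps the iterated contractions inside $\CItspNis$.
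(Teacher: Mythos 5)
Your reduction of the general case to codimension one is sound and is in fact the same mechanism as the paper's Step 2: the paper also factors $i_\sH$ through a coordinate hyperplane and collapses the spectral sequence $E_2^{a,b}=R^{b} i_{\sH,\sY}^! R^a i_{\sY}^! F_{(\sX,D)} \Rightarrow R^{a+b} i_{\sH}^! F_{(\sX,D)}$, using Lemma \ref{lem:F-1splitting} to keep the contracted coefficients in $\CItspNis$ (the paper peels off the codimension-one immersion first rather than last, which is immaterial). The genuine gap is in the codimension-one case, which is where the entire content of the theorem lives, and which you present as a plan rather than a proof --- you yourself defer ``the hard part'' to an unexecuted Gysin/residue argument. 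Two things are concretely missing. First, your two-term-complex description of the stalks of $R^\nu i_\sH^! G_{(\sX,D)}$ is unjustified for $\nu\geq 2$: the punctured scheme $\sX^h_x\setminus\sH$ is not henselian local, so $H^{\nu-1}_\Nis(\sX^h_x\setminus\sH, G_{(\sX,D)})$ enters the localization sequence, and its vanishing for $\nu-1\geq 1$ is a nontrivial assertion that does not ``fall out of the same residue computation.'' Second, the identification of the cokernel with $\tauu{e}G$ in the presence of the residual modulus meeting $\sH$ is exactly the delicate point you flag without resolving.

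The paper settles both points with one tool your proposal never invokes: the exact sequence of \cite[Lem.\ 7.1]{shuji},
\[ 0\to F_{(\sX,D')} \to F_{(\sX,D)} \to \iota_*(\Fcontt{e_1})_{(\sZ,D_\sZ)} \to 0, \]
which strips a single reduced component $\sZ=\{t_1=0\}$ from the modulus and computes the quotient as the contraction $\Fcontt{e_1}$ on $\sZ$ with the residual modulus $D_\sZ$. A double induction --- on the number $r$ of components of $|D|$ and on $\dim\sX$, with base case $r=0$ supplied by the purity theorem \cite[Cor.\ 8.6(3)]{shuji} --- then gives the vanishing for $\nu>1$, splitting into the cases $\sH\neq\sZ$ (where one passes to $\sH\cap\sZ\subset\sZ$ and lowers the dimension) and $\sH=\sZ$ (where $R^\nu\iota^!\iota_*=0$ for $\nu>0$); in the latter case the boundary map of the long exact sequence, combined with the identification $(\Fc)_{(\sH,D_\sH)}\simeq R^1 i_\sH^! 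F_{(\sX,D')}$ of \cite[Lem.\ 7.1(2)]{shuji}, exhibits $R^1 i_\sH^! F_{(\sX,D)}$ as $\Fc/\Fcgm=\tauu{1}F$, while $\sH\not\subset|D|$ yields $\Fc=\tauu{0}F$ directly. To complete your write-up you would need to import this lemma (or prove an equivalent); your appeal to \eqref{eq;Fgm} and to the retraction $s_G$ of Lemma \ref{lem:F-1splitting} is consistent with that route but does not substitute for it.
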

\begin{proof}

The proof is divided into two steps.

\medbreak\noindent{\bf Step 1:}\;
We prove \eqref{eq;lem;purityM} and \eqref{eq1;lem;purityM} in case $q=|I|=1$. 

For $\nu=0$ \eqref{eq;lem;purityM} follows from the semipurity of $F$.
Thus it suffices to show \eqref{eq;lem;purityM} only for $\nu>1$.
Let $J=\{j\in [1,n]\;|\; e_j\not=0\}$ and $r=|J|$.
If $\dim(\sX)=0$, the assertion is trivial. If $r=0$, the assertion follows from \cite[Cor. 8.6(3)]{shuji}.
Assume $r>0$ and $\dim(\sX)\geq 1$, and proceed by the double induction on $r$ and $\dim(\sX)$. Without loss of generality, we may assume 
\begin{enumerate}
\item[$(\spadesuit)$]
$e_1\not=0$, and $\sH=\{t_1=0\}$ if $\sH\subset |D|$.
\end{enumerate}
Let $\iota:\sZ\hookrightarrow \sX$ be the closed immersion defined by $\{t_1=0\}$
and $D_\sZ=\{t_2^{e_2}\cdots t_r^{e_r}=0\}\subset \sZ$ and $D'=\{t_2^{e_2}\cdots t_r^{e_r}=0\}\subset \sX$.
By \cite[Lem. 7.1]{shuji}, we have an exact sequence sheaves on $\sX_{\Nis}$:
\[ 0\to F_{(\sX,D')} \to F_{(\sX,D)} \to \iota_*(\Fcontt {e_1})_{(\sZ,D_\sZ)} \to 0,\]
which gives rise to a long exact sequence of sheaves on $\sH_{\Nis}$:
\begin{equation}\label{eq2;lem;purityM}
 \cdots \to R^\nu i_\sH^!F_{(\sX,D')}\to R^\nu i_\sH^! F_{(\sX,D)}\to
R^\nu i_\sH^!\iota_*(\Fcontt {e_1})_{(\sZ,D_\sZ)}\to\cdots.
\end{equation}
By the induction hypothesis, $R^\nu i_\sH^!F_{(\sX,D')}=0$ for $\nu>1$.
In case $\sH\not=\sZ$, we have a Cartesian diagram of closed immersions
\[\xymatrix{
\sH\cap \sZ \ar[r]^{\iota'} \ar[d]_{i_{\sH\cap \sZ}} & \sH \ar[d]^{i_\sH} \\
\sZ \ar[r]^{\iota}  & \sX \\
}\]
and we have an isomorphism
\[ R^\nu i_\sH^!\iota_*(\Fcontt {e_1})_{(\sZ,D_\sZ)} \simeq 
\iota'_* R^\nu i_{\sH\cap \sZ}^!(\Fcontt {e_1})_{(\sZ,D_\sZ)}.\]
By the induction hypothesis, $R^\nu i_{\sH\cap \sZ}^!(\Fcontt {e_1})_{(\sZ,D_\sZ)}=0$ for $\nu>1$ noting $\Fcontt {e_1}\in \CItspNis$ by Lemma \ref{lem:F-1splitting}.
So the desired vanishing follows from \eqref{eq2;lem;purityM}.
Moreover, the assumptions $(\spadesuit)$ and $\sH\not=\sZ$ imply that $\sH\not\subset |D|$. Then \eqref{eq1;lem;purityM} (with $q=1$) follows from \cite[Lem. 7.1(2)]{shuji}.

In case $\sZ=\sH$, we have
\[R^\nu i_\sH^!\iota_*(\Fcontt {e_1})_{(\sZ,D_\sZ)}=
R^\nu \iota^!\iota_*(\Fcontt {e_1})_{(\sZ,D_\sZ)},\]
which vanishes for $\nu>0$. Hence \eqref{eq2;lem;purityM} gives the desired vanishing together with an exact sequence:
\[ 0\to (\Fcontt {e_1})_{(\sH,D_\sH)} \rmapo {\delta} R^1 i_\sH^!F_{(\sX,D')}\to 
R^1 i_\sH^! F_{(\sX,D)}\to 0.\]
By \cite[Lem. 7.1(2)]{shuji} we have an isomorphism
\[ (\Fc)_{(\sH,D_\sH)} \simeq R^1 i_\sH^!F_{(\sX,D')}\]
through which $\delta$ is identified with the map induced by the canonical map
$\Fcontt {e_1} \to \Fc$.
This proves the desired isomorphism \eqref{eq1;lem;purityM} in case $\sZ=\sH$
and completes Step 1.

\medbreak\noindent
{\bf Step 2:}\;
We prove the theoerm by the induction on $q$ assuming $q>0$.
Let $I=\{i_1,\dots,i_q\}\subset [1,n]$ and $\sY\subset \sX$ be the closed subscheme defined by $\{t_{i_1}=0\}$. 
Let $i_\sY:\sY\hookrightarrow \sX$ and $i_{\sH,\sY}:\sH\to \sY$
be the induced closed immersions. By Step 1 we have
$R^\nu i_{\sY}^! F_{(\sX,D)} =0$ for $\nu\not=1$ and we have an isomorphism
\[(\tauu {e_{i_1}} F)_{(\sY,D_\sY)} \simeq R^1 i_\sY^! F_{(\sX,D)}\qwith
D_\sY=\{t_1^{e_1}\cdots \overset{\vee}{t_{i_1}^{e_{i_1}}}\cdots t_n^{e_n}=0\}\subset \sY.\]
Note $\tauu {e_{i_1}} F\in \CItspNis$ by Lemma \ref{lem:F-1splitting}.
Thus, by the induction hypothesis, we have
$R^\nu i_{\sH,\sY}^! \tauu {e_{i_1}} F_{(\sY,D_\sY)} =0$ for $\nu\not=q-1$.
By the spectral sequence
\[ E_2^{a,b}= R^{b} i_{\sH,\sY}^! R^a i_{\sY}^! F_{(\sX,D)} \Rightarrow
R^{a+b} i_{\sH}^! F_{(\sX,D)} ,\]
we get the desired vanishing \eqref{eq;lem;purityM} and an isomorphism 
\begin{multline*}
R^q i_{\sH}^! F_{(\sX,D)} \simeq  R^{q-1} i_{\sH,\sY}^! R^1 i_{\sY}^! F_{(\sX,D)}  \simeq R^{q-1} i_{\sH,\sY}^! (\tauu {e_{i_1}} F)_{(\sY,D_\sY)}\\
 \simeq (\tauu {e_{i_2},\dots,e_{i_q}}(\tauu {e_{i_1}} F))_{(\sH,D_\sH)}\simeq
 (\tauu {e_{i_1}, e_{i_2},\dots,e_{i_q}} F)_{(\sH,D_\sH)},
\end{multline*}
where the third isomorphism holds by the induction hypothesis.
This completes the proof of the theorem.
\end{proof}

We say $\sX=(X,D) \in \ulMCor$ reduced if so is $D$.
The following corollaries \ref{cor;purityM} and \ref{cor:purityreducedmodulus} are immediate consequences of Theorem \ref{thm;purityM}.

\begin{cor}\label{cor;purityM}
Take $F\in\CItspNis$ and $(X,D)\in \ulMCorls$ reduced.
Let $x\in \cd X n$ with $K=k(x)$ and let
$\sX=\hen{X}{x}$ be the henselization of $X$ at $x$. Then
\[H^i_x(X_{\Nis},F_{(X,D)})=0 \qfor i\not=n.\]
Choosing an isomorphism
\[\epsilon :\sX \simeq  \Spec K\{t_1,\dots,t_n\}\]
such that $D_{|\sX} = \{t_1^{e_1}\cdots t_n^{e_n}=0\}\subset \sX$ with $e_1,\dots,e_n\in \{0,1\}$, there exists an isomorphism depending on $\epsilon$:
\[ \theta_\epsilon: \tauu {e_1,e_2,\dots,e_n}F(x) 
\simeq H^n_x(X_{\Nis},F_{(X,D)}).\]
\end{cor}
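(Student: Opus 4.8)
\emph{Proof strategy.} The plan is to reduce the statement to the purely local situation treated by Theorem \ref{thm;purityM}, applied with $I=[1,n]$ so that $q=|I|=n$. First I would use that local cohomology at a point for the Nisnevich topology is computed on the henselization: since $F_{(X,D)}$ is a Nisnevich sheaf on $X_\et$ and $\sX=\hen X x$, one has a canonical identification
\[ H^i_x(X_\Nis,F_{(X,D)}) \simeq H^i_{\{x\}}(\sX_\Nis, F_{(\sX,D_{|\sX})}), \]
where $\{x\}$ is the closed point of $\sX$ and $F_{(\sX,D_{|\sX})}$ is the restriction obtained from the extension of $F$ to $\ulMCor^{\rm pro}$ recalled in \eqref{ulMCorpro}. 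Writing $i_\sH:\sH\hookrightarrow \sX$ for the closed immersion of this closed point, the right-hand side is the local cohomology with support in $\sH$, which is computed by the functors $R^\nu i_\sH^!$; as $\sH$ is a single point, these sheaves coincide with their groups of sections, so that $H^i_x(X_\Nis,F_{(X,D)}) \simeq R^i i_\sH^! F_{(\sX,D_{|\sX})}$.

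\emph{Reduction to the local model.} Next I would put $(\sX,D_{|\sX})$ into the normal form required by Theorem \ref{thm;purityM}. Because $X\in\Sm$, $x\in\cd X n$ and $|D|$ is a reduced SNCD through $x$, and because $k$ is perfect, the henselian local ring $\Oh X x$ admits a coefficient field $K=k(x)$ and a regular system of parameters $t_1,\dots,t_n$ for which the branches of $D$ through $x$ are among the $\{t_i=0\}$. This is exactly the datum of an isomorphism $\epsilon:\sX\simeq \Spec K\{t_1,\dots,t_n\}$ with $D_{|\sX}=\{t_1^{e_1}\cdots t_n^{e_n}=0\}$, the reducedness of $D$ forcing $e_1,\dots,e_n\in\{0,1\}$. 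Under $\epsilon$ the closed point $\sH$ is cut out by $\{t_1=\dots=t_n=0\}$, i.e. corresponds to $I=[1,n]$; hence $q=n$ and $D_\sH=\{\prod_{j\in\emptyset} t_j^{e_j}=0\}=\emptyset$, so that $(\sH,D_\sH)=(\Spec K,\emptyset)$.

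\emph{Conclusion and the main difficulty.} Applying Theorem \ref{thm;purityM} with this $I$ yields at once $R^\nu i_\sH^! F_{(\sX,D_{|\sX})}=0$ for $\nu\neq n$, hence the vanishing $H^i_x(X_\Nis,F_{(X,D)})=0$ for $i\neq n$, together with an isomorphism $R^n i_\sH^! F_{(\sX,D_{|\sX})}\simeq (\tauu{e_1,\dots,e_n}F)_{(\Spec K,\emptyset)}$ depending on $\epsilon$. Finally, since $\tauu{e_1,\dots,e_n}F\in\CItspNis$ is semipure, its value on the empty-modulus pair $(\Spec K,\emptyset)$ agrees with the value $\tauu{e_1,\dots,e_n}F(x)$ of the associated Nisnevich sheaf $\ulomega_!\,\tauu{e_1,\dots,e_n}F$ at $x$, and composing these identifications produces the desired $\theta_\epsilon$. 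I expect the only genuine content, beyond bookkeeping, to lie in the two compatibilities invoked in the first paragraph: that Nisnevich local cohomology at $x$ is insensitive to henselization and is represented by $R^\bullet i_\sH^!$ on $\sX$, and the structural fact (coefficient field together with adapted parameters) underlying the existence of $\epsilon$. Both are standard for smooth schemes with SNC divisors over a perfect field, and once granted the corollary is immediate from Theorem \ref{thm;purityM}.
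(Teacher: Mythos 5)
Your proposal is correct and is exactly the argument the paper intends: the paper offers no separate proof, declaring Corollary \ref{cor;purityM} an immediate consequence of Theorem \ref{thm;purityM}, and your reduction (excision to the henselization $\sX=\hen{X}{x}$, the choice of $\epsilon$ via a coefficient field and parameters adapted to the SNCD, then the theorem with $I=[1,n]$, $q=n$, $D_\sH=\emptyset$) is precisely the intended bookkeeping. The only cosmetic remark is that your appeal to semipurity in the last step is unnecessary, since $(\tauu{e_1,\dots,e_n}F)(\Spec K,\emptyset)=\ulomega_!\tauu{e_1,\dots,e_n}F(x)$ holds by the definition of $\ulomega_!$.
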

\medbreak

\begin{cor}\label{cor:purityreducedmodulus}
For $F\in \CItspNis$ and $\sX=(X,D) \in \ulMCorls$ reduced,
the following sequence is exact:
\[ 0\to F(X,D) \to F(X-D,\emptyset) \to 
\underset{\xi\in \cd D 0}{\bigoplus} \frac{F(\hen X \xi-\xi,\emptyset)}{F(\hen X \xi,\xi)}.\]
\end{cor}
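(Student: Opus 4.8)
The plan is to deduce Corollary~\ref{cor:purityreducedmodulus} from Corollary~\ref{cor;purityM} by a standard coniveau/local-cohomology argument, exactly as one proves the classical Zariski--Nagata purity statement from the vanishing of local cohomology in the wrong degrees. Since the statement is étale-local on $X$ and the sheaf $F_{(X,D)}$ lives on the small Nisnevich site $X_\Nis$, I would work with the coniveau spectral sequence
\eq{plan-coniveau}{E_1^{n,i-n}=\underset{x\in \cd X n}{\bigoplus}H^{i}_x(X_{\Nis},F_{(X,D)})\Rightarrow H^i_\Nis(X,F_{(X,D)}),}
whose $E_1$-page decomposes the cohomology by the codimension of the support. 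Corollary~\ref{cor;purityM} tells us that for each point $x$ of codimension $n$ the local cohomology $H^i_x(X_\Nis,F_{(X,D)})$ is concentrated in the single degree $i=n$; this is precisely the input that degenerates the spectral sequence and lets us read off the low-degree terms as the cohomology of an explicit complex.

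Concretely, the first step is to identify the beginning of the resulting Cousin-type complex. The vanishing $H^i_x=0$ for $i\neq n$ forces the $E_1$-page to be supported on the diagonal $i=n$, so that $H^i_\Nis(X,F_{(X,D)})$ is computed by the complex
\eq{plan-cousin}{\underset{x\in \cd X 0}{\bigoplus}H^0_x(X_\Nis,F_{(X,D)})\to \underset{x\in \cd X 1}{\bigoplus}H^1_x(X_\Nis,F_{(X,D)})\to\cdots.}
The first term is just $F(X-D,\emptyset)$ after passing to the generic points and using semipurity of $F$ (which identifies the global sections of $F_{(X,D)}$ with a subgroup of sections over the interior); the second term, by the codimension-one part of Corollary~\ref{cor;purityM} with $n=1$ and the explicit isomorphism $\theta_\epsilon$, is the direct sum over the codimension-one points $\xi$ of $\tauu{e_\xi}F(\xi)$. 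The exactness of \eqref{plan-cousin} in the lowest spot says $F(X,D)=H^0_\Nis(X,F_{(X,D)})$ is the kernel of the first differential, which is the desired left-exactness statement, and the map into the codimension-one term is exactly the quotient $F(\hen X\xi-\xi,\emptyset)/F(\hen X\xi,\xi)$ once one interprets $\tauu{e_\xi}F(\xi)$ via Corollary~\ref{cor;purityM} together with the local description in Step~1 of Theorem~\ref{thm;purityM}.

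The main step requiring care is the identification of the codimension-one local cohomology with the stated quotient. For a codimension-one point $\xi\in\cd D 0$ lying on the boundary, one has $e_\xi=1$, so $\tauu{1}F=\Fc/\Fcgm$, and the isomorphism $\theta_\epsilon$ of Corollary~\ref{cor;purityM} realizes $H^1_\xi(X_\Nis,F_{(X,D)})$ as $(\Fc/\Fcgm)(\xi)$. I would then reconcile this with the geometric quotient $F(\hen X\xi-\xi,\emptyset)/F(\hen X\xi,\xi)$: working in the henselization $\hen X\xi\simeq\Spec K\{t\}$ with $\xi=\{t=0\}$, the group $F(\hen X\xi-\xi,\emptyset)$ is the sections over the punctured henselian trait, $F(\hen X\xi,\xi)$ is the reduced-modulus subgroup, and \cite[Lem. 7.1]{shuji} identifies the quotient with the contraction $\Fc$ modulo its geometric part $\Fcgm$. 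For a codimension-one point $\xi$ not on $|D|$ one has $e_\xi=0$, so $\tauu{0}F=\Fc$; but such a point does not appear in the target sum indexed by $\cd D 0$ precisely because $F_{(X,D)}$ and $F_{(X,\emptyset)}$ agree away from $D$ and contribute nothing to the boundary map. The hard part is thus bookkeeping: checking that the differential in the coniveau complex coincides, under $\theta_\epsilon$ and the gysin-type identifications, with the natural boundary map $F(X-D)\to\bigoplus_\xi F(\hen X\xi-\xi)/F(\hen X\xi,\xi)$, which amounts to the compatibility of the local symbol / residue maps with the coniveau differential, and to verifying that only the points $\xi\in\cd D 0$ (rather than all codimension-one points) contribute nontrivially.
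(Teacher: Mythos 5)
Your proposal is correct and takes essentially the paper's own route: the paper derives this corollary as an ``immediate consequence'' of Theorem \ref{thm;purityM}, i.e.\ from the exactness of the Cousin complex guaranteed by Corollary \ref{cor;purityM}, which is precisely the coniveau mechanism you describe (and which the paper spells out explicitly in the proof of Corollary \ref{cor2:purityreducedmodulus}). Two cosmetic remarks: the first Cousin term is the generic stalk $F(\eta)$ rather than $F(X-D,\emptyset)$, so one should restrict along the injection $F(X-D,\emptyset)\hookrightarrow F(\eta)$ (purity with empty modulus), and the identification $H^1_\xi \cong F(\hen X \xi - \xi,\emptyset)/F(\hen X \xi,\xi)$ follows most directly from the local-cohomology sequence on the henselian trait $\hen X \xi$ together with the vanishing of $H^1$ on a henselian local scheme, with no need to pass through $\theta_\epsilon$ and $\tauu 1 F$.
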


The idea of deducing the following corollary from the above is due to A. Merici.

\begin{cor}\label{cor2:purityreducedmodulus}
Let $\sX=(X,D)\in \ulMCorls$ be reduced. 
\begin{itemize}
\item[(1)]
Assume given an exact sequence in $\ulMNST$:
\eq{eq0;cor2:purityreducedmodulus}{ 0\to H \rmapo{\phi} G \rmapo{\psi} F}
such that $F,G, H\in \CItspNis$ and that $\ulomega_!\psi$ is surjective in $\NST$.
If $X$ is henselian local, 
\[ 0\to H(\sX) \to G(\sX) \to F(\sX) \to 0\]
is exact.
\item[(2)]
Let $\gamma: F\to G$ be a map in $\CItspNis$ such that $\ulomega_!\gamma$ is an isomorphism. Then $F(\sX)\to G(\sX)$ is an isomorphism. 
\item[(3)]
For $F\in \CItspNis$, the unit map $u: F\to \ulomegaCI\ulomega_! F$ induces an isomorphism $F(\sX)\cong \ulomegaCI\ulomega_! F(\sX)$. 
\end{itemize}
\end{cor}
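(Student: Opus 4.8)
The plan is to deduce (3) from (2) and to prove (1) and (2) by one common method: translate each assertion into the vanishing of a cokernel \emph{sheaf} on $X_\Nis$, verify that vanishing on Nisnevich stalks by induction on codimension, and compute the stalk at a closed point through the purity isomorphism \eqref{eq1;lem;purityM} of Theorem~\ref{thm;purityM}. For (3), observe that $\ulomegaCI\ulomega_! F\in\CItspNis$ and that the unit $u$ becomes invertible after $\ulomega_!$: by a triangle identity for the adjunction \eqref{omegaCIadjointNis}, $\ulomega_! u$ is a one-sided inverse of the counit $\ulomega_!\ulomegaCI\ulomega_! F\to\ulomega_! F$, which is an isomorphism because $\ulomegaCI$ is fully faithful; hence (3) is the instance $\gamma=u$ of (2). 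Throughout I use that for $F\in\CItspNis$ the Nisnevich stalk of $F_{(X,D)}$ at a point $y$ is $F(\hen X y, D_{|\hen X y})$, and that $H^i_\Nis(X,-)=0$ for $i>0$ when $X$ is henselian local.

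For (2), first note that $\Ker\gamma$, being a subobject of the semi-pure sheaf $F$, is semi-pure, while $\ulomega_!\Ker\gamma=\Ker(\ulomega_!\gamma)=0$; semi-purity then forces $\Ker\gamma=0$, so $\gamma$ is a monomorphism. It is therefore enough to prove that $C:=\Coker(\gamma_{(X,D)})$ vanishes as a sheaf on $X_\Nis$, for then $\gamma_{(X,D)}$ is an isomorphism of sheaves and $F(\sX)\cong G(\sX)$ for every $\sX$. I check $C_y=0$ for all $y\in X$ by induction on $c=\codim_X(y)=\dim\hen X y$. For $c=0$ (the generic point) the stalk is $\Coker(\ulomega_!\gamma)$ over the function field, which is $0$ since $\ulomega_!\gamma$ is an isomorphism. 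For $c\ge 1$ the inductive hypothesis makes $C$ supported at the closed point $y$ of $\hen X y$, and the localization sequence for $H^*_y$ together with the purity vanishing $H^i_y(F_{(X,D)})=0=H^i_y(G_{(X,D)})$ for $i\ne c$ of Theorem~\ref{thm;purityM} exhibits $C_y$ as a subobject of $\Ker\big(\tauu{e}\gamma\colon \tauu{e}F(y)\to\tauu{e}G(y)\big)$ (the full kernel when $c=1$, and $0$ by degree reasons when $c\ge 2$). This kernel is $0$ once one knows that $\tauu{e}$ preserves monomorphisms; see the last paragraph.

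For (1), applying the exact functor $\ulomega_!$ and using surjectivity of $\ulomega_!\psi$ gives a short exact sequence $0\to\ulomega_! H\to\ulomega_! G\to\ulomega_! F\to 0$ in $\NST$. Left and middle exactness of $0\to H(\sX)\to G(\sX)\to F(\sX)$ are formal, since taking sections is left exact and $\phi$ is a monomorphism; only surjectivity of $\psi_\sX$ is at issue, and for $X$ henselian local this follows once $Q:=\Coker(\psi_{(X,D)})$ vanishes as a sheaf. Again I show $Q_y=0$ by induction on $c=\codim_X(y)$: the case $c=0$ uses surjectivity of $\ulomega_!\psi$ on the function field, and for $c\ge 1$ the inductive hypothesis makes $Q$ supported at the closed point $y$ of $\hen X y$. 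Now $H_{(X,D)},G_{(X,D)},F_{(X,D)}$ all have $H^i_y=0$ for $i\ne c$ by Theorem~\ref{thm;purityM}, so the local cohomology $H^*_y$ of the complex $[H_{(X,D)}\to G_{(X,D)}\to F_{(X,D)}]$ is concentrated in a single row; since this complex is quasi-isomorphic to $Q[-2]$, the resulting spectral sequence identifies $Q_y$ with the degree-$(2-c)$ cohomology of the row-complex $\tauu{e}H(y)\to\tauu{e}G(y)\to\tauu{e}F(y)$ placed in degrees $0,1,2$. This vanishes for $c\ge 2$ by degree reasons together with the injectivity of $\tauu{e}\phi$, and for $c=1$ by exactness of this complex in the middle. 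Hence $Q=0$, $\psi_\sX$ is surjective, and the asserted sequence of sections is exact.

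The main obstacle, and the technical heart of both arguments, is the exactness behaviour of the twist functors $\tauu{e}=\tauu{e_r}\cdots\tauu{e_1}$. Here $\tauu{0}=\Fc$ is the kernel of a map between the left-exact functors $\uHom_{\ulMPST}((\P^1-0,\infty),-)$ and the identity, hence is itself left exact and in particular preserves monomorphisms; and by Lemma~\ref{lem:F-1splitting} the inclusion $\Fcgm\to\Fc$ admits a \emph{functorial} retraction, exhibiting $\tauu{1}$ as a functorial direct summand of $\tauu{0}$. Left-exactness and preservation of monomorphisms pass to functorial direct summands and to composites, so every $\tauu{e}$ enjoys both properties; this is precisely what closes the stalk computations above (middle-exactness of the $\tauu{e}$-complex for (1), injectivity of $\tauu{e}\gamma$ for (2)). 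The other essential input is the purity isomorphism of Theorem~\ref{thm;purityM}, whose proof has already absorbed the serious geometric work; granting it, the present corollaries follow by the formal homological manipulations sketched above.
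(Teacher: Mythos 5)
Your proposal is correct and takes essentially the same route as the paper: both arguments rest on the purity theorem (Theorem \ref{thm;purityM}, via Corollary \ref{cor;purityM}) to identify local cohomology in codimension $c$ with the $c$-fold twists $\tauu{e}$, and on left-exactness of $\uHom_{\ulMPST}((\P^1-0,\infty),-)$ together with the functorial retraction of Lemma \ref{lem:F-1splitting} to obtain exactly the two key facts the paper isolates in its Claim, namely middle exactness of $\tauu{e}H\to\tauu{e}G\to\tauu{e}F$ and injectivity of the double twists of $\phi$. Your stalkwise induction with the local-cohomology spectral sequence is a repackaging of the paper's diagram chase on the Cousin complexes (the coniveau $E_1$-page), and your treatments of (2) (via semipurity of $\Ker\gamma$) and (3) (via the triangle identity for \eqref{omegaCIadjointNis}) supply the same details the paper leaves implicit.
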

\begin{proof}
To show (1), it suffices to show the surjectivity of $G(\sX) \to F(\sX)$.
Let $\eta\in X$ be the generic point and consider the following commutative diagram of the Cousin complexes:
\[\xymatrix{
0\ar[r] & H(\sX) \ar[r]\ar[d] & H(\eta) \ar[r]\ar[d]^{\phi(\eta)}&
\underset{x\in \cd X 1}{\bigoplus} H^1_x(X,H_\sX) \ar[r]\ar[d]^{H^1_x(\phi)}&
\underset{y\in \cd X 2}{\bigoplus} H^2_y(X,H_\sX) \ar[d]^{H^2_y(\phi)} \\
0\ar[r] & G(\sX) \ar[r]\ar[d]  & G(\eta) \ar[r]\ar[d]^{\psi(\eta)}&
\underset{x\in \cd X 1}{\bigoplus} H^1_x(X,G_\sX) \ar[r]\ar[d]^{H^1_x(\psi)}&
\underset{y\in \cd X 2}{\bigoplus} H^2_y(X,G_\sX) \ar[d]^{H^2_y(\psi)} \\
0\ar[r] & F(\sX) \ar[r] & F(\eta) \ar[r]&
\underset{x\in \cd X 1}{\bigoplus} H^1_x(X,F_\sX) \ar[r]&
\underset{y\in \cd X 2}{\bigoplus} H^2_y(X,F_\sX)  \\
}\]
By Corollary \ref{cor;purityM}, the horizontal sequences are exact.
By the assumption, $\psi(\eta)$ is surjective. 
By a diagram chase we are reduced to showing the following.

\begin{claim}
\begin{itemize}
\item[(i)]
For $x\in \cd X 1$, the sequence
\[ H^1_x(X,H_\sX) \to H^1_x(X,G_\sX) \to H^1_x(X,F_\sX) \]
is exact.
\item[(ii)]
For $y\in \cd X 2$, $H^2_y(\phi)$ is injective.
\end{itemize}
\end{claim}

To show (i), by Corollary \ref{cor;purityM}, it suffices to show the exactness of 
$\tauu e H \to \tauu e G \to \tauu e F$ for $e\in \{0,1\}$.
The case $e=0$ follows from the left exactness of the endofunctor 
$\uHom_{\uMPST}(\sX, -)$ on $\uMNST$ for any $\sX\in \ulMCor$.
We have a commutative diagram
\[\xymatrix{
\tauu 1 H \ar[r]^{\phi} \ar[d]^{s_H} & \tauu 1 G \ar[r]^{\psi} \ar[d]^{s_G}  & \tauu 1 F \ar[d]^{s_F}  \\
\tauu 0 H \ar[r]^{\phi}  \ar@<0.8ex>[u]^{p_H} & \tauu 0 G \ar[r]^{\psi}  
\ar@<0.8ex>[u]^{p_G} & \tauu 0 F \ar@<0.8ex>[u]^{p_F} \\}\]
where $p_*$ are the projections and $s_*$ is a right inverse of $p_*$ coming from the retractions from Lemma \ref{lem:F-1splitting}. We have
\[ \phi\circ p_H= p_G\circ \phi,\; \psi\circ p_G= p_F\circ \psi,
\; \phi\circ s_H= s_G\circ \phi,\; \psi\circ s_G= s_F\circ \psi.\]
By a diagram chase, the case $e=1$ follows from the case $e=0$.

To show (ii), by Corollary \ref{cor;purityM}, it suffices to show the injectivity of 
$\tauu {\ul{e}} H \to \tauu {\ul{e}}  G $ for $\ul{e}\in \{(0,0),(0,1),(1,0),(1.1)\}$.
The case $\ul{e}=(0,0)$ follows from the same left exactness as above, and 
the other cases from this case thanks to Lemma \ref{lem:F-1splitting}.

To show (2), we may assume $\sX$ is henselian local. Then it follows from (1).
(3) follows from (2) since $\ulomega_! u$ is an isomorphism.
This completes the proof of the corollary.
\end{proof}

\section{Review on higher local symbols}\label{HLS}

In this section we recall from \cite{rs-RSCpairing} the higher local symbols for reciprocity sheaves, which is a fundamental tool to prove Theorem \ref{thm:CItsp-log}, one of the main theorems of this paper.

\begin{para}\label{par:sp-ch}
Let $X$ be an excellent separate scheme of pure dimension $d$. 
We denote by $X^{(c)}$ the set of points of codimension $c$.
Let $x, y\in X$ be two points. We write
\[x> y :\Longleftrightarrow \ol{\{x\}}\supset \ol{\{y\}}, \text{ i.e. }, y\in \ol{\{x\}}.\]
A {\em specialization chain} (or just {\em chain}) in $X$ is a sequence
\[x=(x_1,\ldots, x_n) \quad \text{with }x_1> x_2> \ldots > x_n.\]
We say a specialization chain $x$ is {\em maximal} or {\em a Parsin chain} if $n=d$ and $ x_i\in X^{(i)}$\footnote{
The notation is different from \cite{rs-RSCpairing} in which 
a specialization chain $x=(x_1,\ldots, x_n)$ satisfies $x_n> \ldots > x_1$ and 
$x$ is maximal if the closures of $ x_i$ are of dimension $i$.}.
We denote by $c(X)$ the set of specialization chains in $X$ and 
\[\mc(X):=\{x\in c(X)|\; \text{$x$ is maximal}\}.\]
If $d=0$, the only element in $\mc(X)$ is the empty chain.

We say a specialization chain $x=(x_1,\dots,x_n)$ is {\em a $Q$-chain with break at $r$} if $0\leq r \leq n=d-1$ and $x_i\in \cd X i$ for $i\leq r$ and $x_i\in \cd X {i+1}$ for $i>r$. We denote 
\[Q_r(X):=\{\text{$Q$-chains with break at $r$ in }X\}.\]
For $x\in Q_r(X)$, we denote by $B(x)$ the set of all $y\in X$ such that 
\[ x(y):=(x_1,\dots ,x_r,y,x_{r+1},\dots,x_{d-1})\in \mc(X).\]

For a specialization chain $x=(x_1,\ldots, x_n)$ in $X$, we let $\sO_{X,x}^h$ be the henselization of $X$ along $x$ as defined in \cite[Def. 1.6.2]{KS}: If $n=1$, $\sO_{X,x}^h$ is the henselization of $\sO_{X,x}$. If $n>1$, let $y=(x_2,\dots,x_n)$ and assume that we have defined $R=\sO_{X,y}^h$. Let $T$ be the set of all prime ideals of $R$ lying over $x_1$. Then we define
$\sO_{X,x}^h$ as the product of the henselization of $R$ at $\fp$ for all $\fp\in T$.
By definition, $\sO_{X,x}^h$ is a finite product of henselian local rings and we let $\fm_x\subset \sO^h_{X,x}$ denote the product of its maximal ideals and $\kX x$ denote its total fraction ring. 

For $x\in Q_r(X)$ and $x(y)$ with $y\in B(x)$ as above, we have a natural inclusion of rings
\eq{iotaxy}{\iota_y : \kX x \to \kX {x(y)}.}

\end{para}
\medbreak

In what follows in this section, we fix $F\in \RSC_\Nis$ and write $\tF=\ulomegaCI F\in \CItsp_\Nis$ (cf. \eqref{omegaCIadjointNis}). 
We also fix a function field $K$ over the base field $k$.
Let $X$ be an integral $K$-scheme of dimension $d$. 
Recall from \cite{rs-RSCpairing} that we have a collection of bilinear pairings
\eq{HLC}{\{ (-,-)_{X/K,x}: F(K(X))\otimes K^M_d(\kX x)\to F(K)\}_{x\in \mc(X)},}
where we recall $\kX x$ is a finite product of fields and $ K^M_n(\kX x)$ for an integer $n>0$ denotes the product of the Milnor $K$-groups of the fields. We also note that $\Oh X x$ is a finite product of henselian dvr's and let $K^M_n(\Oh X x)$ denote the product of   the Milnor $K$-groups of the dvr's.

For a local ring $R$ and an ideal $I\subset R$, let $K^M_n(R,I)\subset K^M_n(R)$ denote the subgroup generated by symbols
\[ \{1+a,b_1,\dots,b_{n-1}\} \qwith a\in I,\, b_i\in R^\times.\]

The following properties hold for all $a\in F(K(X))$:

\begin{enumerate}[label= (HS\arabic*)]

\item\label{HS1}
Let $X\hookrightarrow X'$ be an open immersion where $X'$ is an integral $K$-scheme of dimension $d$. Then we have
 $(a,\beta)_{X/K,x}=(a,\beta)_{X'/K,x}$ for all $\beta\in K^M_d(\kX x)$.

\item\label{HS2} Let $x=(x_1,\ldots, x_d)\in\mc(X)$ and $X_1\subset X$ be the closure of $x_1$ and $\delta=(x_2,\ldots, x_d)\in \mc(X_1)$.
Then for all $\beta\in K^M_d(\kX x)$
\[(a, \beta)_{X/K, x} =\begin{cases} 
      \beta\cdot \Tr_{E/K}(a), & \text{if $d=0$ and $E=K(X)$};\\
     (a_{|X_1}, \partial_{x_1}\beta)_{X_1/K, \delta}, & \text{if } d\ge 1 \text{ and } 
a\in F(\sO_{X, x_1});
\end{cases}\]
where 
$a_{|X_1}\in F(K(X_1))$ is the restriction of $a$, and 
\[\partial_{x_1}: K^M_d(\kX x) \to K^M_{d-1}(\kh {X_1} \delta)\]
is the tame symbol coming from  the fact that $\sO_{X,x}^h$ is a product of dvr's and $\kh {X_1} \delta$ is identified with the product of its residue fields.

\item\label{HS3} Let $D\subset X$ be an effective Cartier divisor such that
$(X,D)\in \ulMCor^{\rm pro}$ (cf. \S\ref{preliminaries}\eqref{ulMCorpro} ).
Let $I_D\subset \sO_X$ be the ideal sheaf of $D$.
Assume $a\in \tF(X,D)$. Then, for all $x\in \mc(X)$, we have
\[(a, \beta)_{X/K,x}=0\qfor\forall  \beta\in  K^M_d(\sO_{X,x}^h, I_D \sO_{X,x}^h).\] 

\item\label{HS4} 
Let $x\in Q_r(X)$ with $0\leq r\leq d-1$. Then
$(a, \iota_y(\beta))_{X/K,  x(y)}=0$ for almost all $y\in B(x)$.
If $r<d-1$ or $X$ is projective over $K$, 
\[\sum_{y\in B(x)} (a, \iota_y(\beta))_{X/K,  x(y)}=0, 
   \quad \text{for all }\beta\in K^M_d(\kX x),\]
where $\iota_y: \kX x \to \kX {x(y)}$ is the natural map.

\item\label{HS5} 
Let $f: Y\to X$ be a finite surjective $K$-morphism between two integral $K$-schemes. Let $x\in \mc(X)$ and $y\in\mc(Y)$ with $f(y)=x$.
Then we have
\[(f^*a, \beta)_{Y/K,y}=(a, \Nm_{y/x}(\beta))_{X/K,x} \;(a\in F(K(X)),\; \beta\in K^M_d(\kY y)),\]
\[(f_*b,\alpha)_{X/K,x }=  \underset{\substack{z\in \mc(X')\\  f(z)=x}}{\sum}  (b,i_{x/z}\alpha)_{Y/K,z}\; (b\in F(K(X')),\; \alpha\in K^M_d(\kX x)),\]
where $\Nm_{y/x}: K^M_d(\kY y) \to K^M_d(\kX x)$ is the norm map on Milnor $K$-theory and $i_{x/z}: K^M_d(\kX x) \to K^M_d(\kY z)$ is the inclusion map.
\end{enumerate}

\medbreak

Now assume that $X\in \Sm$ of dimension $d$ and $D\subset X$ be a reduced SNCD on $X$. For a function field $K$ over $k$ and a scheme $Z$ over $k$, write $Z_K=Z\otimes_k K$ with $\phi_Z:Z_K\to Z$ the projection.
If $Z$ is integral, we denote by $K(Z)$ the function field of $Z_K$.
We quote the following result from \cite[Pr. 8.4]{rs-RSCpairing}.

\begin{prop}\label{lem2;diagonal}
Let $F\in \RSC_\Nis$ and $a\in F(X-|D|)$. 
Assume that there exists an open subset $U\subset X$ which contains all generic points of $D$ such that the following condition holds:
For any function field $K$ over $k$ and any $x=(x_1,\dots,x_d)\in \mc(U_K)$ with $x_1\in D_K^{(0)}$, we have
\[ ( \phi_X^*(a),\beta)_{U_K/K,x}=0\qfor \forall \beta\in K_d^M(\sO_{X_K,x}^h,\fm_x).\]
Then we have $a\in F(X,D)$.
\end{prop}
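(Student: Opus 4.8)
The plan is to reduce the global modulus condition to a codimension-one statement at each generic point of $D$ by the purity theorem, and then to detect the reduced modulus there through the higher local symbols, reading the vanishing hypothesis as the converse of property \ref{HS3}.

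First I would apply Corollary \ref{cor:purityreducedmodulus} to $\tF=\ulomegaCI F\in\CItspNis$. Since $\ulomega_!\tF=F$ and $\tF$ is semi-pure, evaluation on the empty modulus gives $\tF(X-D,\emptyset)=F(X-|D|)$, so $a$ is a section there. The exact sequence of the corollary then reduces the claim $a\in\tF(X,D)=F(X,D)$ to showing, for every generic point $\xi\in\cd D 0$, that the image $a_\xi$ in $F(\hen X \xi-\xi)$ lies in $\tF(\hen X \xi,\xi)$. Here $\hen X \xi$ is a henselian discrete valuation ring whose uniformizer is a local equation of $D$ and whose residue field is $k(\xi)$; thus what remains is genuinely a codimension-one (reduced) modulus condition in the direction transverse to $D$ at the generic point of the component through $\xi$.

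For a fixed $\xi$ I would detect $a_\xi\in\tF(\hen X \xi,\xi)$ through the higher local symbols, i.e. via the converse of property \ref{HS3}: the vanishing of the symbols on the relative Milnor group $K^M_d(\Oh X \xi,\fm_\xi)$ should force the reduced modulus. To make the $d$-dimensional hypothesis bear on this, I would base change along $\phi_X\colon X_K\to X$ for the function fields $K$ occurring in the hypothesis, so that $\xi$ spreads out to generic points $x_1\in\cd {(D_K)} 0$ of $D_K$, completed to maximal chains $x=(x_1,\dots,x_d)\in\mc(U_K)$ running through $\xi$. The hypothesis then supplies the vanishing of $(\phi_X^*(a),\beta)_{U_K/K,x}$ for all $\beta\in K^M_d(\Oh {X_K} x,\fm_x)$ — precisely the transverse symbol measuring the ramification of $a$ along $D$ at $\xi$. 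Reciprocity (property \ref{HS4}) propagates this vanishing across the component $D_\xi$, and the one-dimensional reciprocity-sheaf theory in the direction transverse to $D$ converts it into the reduced-modulus bound; a faithfully flat descent along $\phi_X$ then returns $a_\xi\in\tF(\hen X \xi,\xi)$.

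The hard part will be the converse of \ref{HS3}: the faithfulness of the higher local symbol pairing with respect to the modulus (ramification) filtration, i.e. that the mere vanishing of the symbols on $K^M_d(\Oh X \xi,\fm_\xi)$ forces the modulus bound. This rests on the full construction of the symbols in \cite{rs-RSCpairing} together with the pushforward formalism of \cite{brs} underlying \ref{HS4}. A secondary point to verify is that the reduced-modulus condition at $\xi$ descends along the essentially smooth base change $\phi_X$, which I expect to follow from the compatibility of $\ulomegaCI F$ with the pro-object extension of \S\ref{preliminaries}\eqref{ulMCorpro}.
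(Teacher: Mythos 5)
The paper offers no internal proof of this proposition: it is quoted verbatim from \cite[Pr.~8.4]{rs-RSCpairing} (a paper in preparation), precisely because its proof requires the full construction of the higher local symbols. Measured against what a proof would need, your proposal has a genuine gap at its core, and it is the step you yourself flag as ``the hard part'': the converse of \ref{HS3}. This is not an auxiliary lemma you can appeal to --- it \emph{is}, in essence, the proposition being proved. The properties \ref{HS1}--\ref{HS5} recorded in \S\ref{HLS} are formal axioms of the symbol (restriction compatibility, inductive formula via tame symbols, vanishing on the relative Milnor group \emph{given} the modulus bound, reciprocity, projection formulas); nothing in that list asserts, nor formally implies, that vanishing of all symbols against $K^M_d(\Oh {X_K} x,\fm_x)$ forces the modulus bound. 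That faithfulness of the pairing with respect to the modulus filtration is the Zariski--Nagata-type purity statement which is the main theorem of \cite{rs-RSCpairing} (as its title indicates), so deferring it to ``the full construction of the symbols'' reduces the proposition to a local form of itself.

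What is sound in your sketch: the reduction via Corollary \ref{cor:purityreducedmodulus} (applied to $\tF=\ulomegaCI F\in\CItspNis$, cf.\ \eqref{omegaCIadjointNis}) to showing $a_\xi\in\tF(\hen X \xi,\xi)$ at each $\xi\in \cd D 0$ is legitimate and in the spirit of how such a statement should be localized, granting the pro-object formalism of \S\ref{preliminaries}\eqref{ulMCorpro} to make sense of $(\hen X \xi,\xi)$. But the two remaining steps are asserted rather than argued: (i) converting the hypothesis --- which quantifies over all function fields $K$ and all maximal chains $x\in\mc(U_K)$ with $x_1\in D_K^{(0)}$ --- into a one-dimensional modulus bound transverse to $D$ over the large residue field $k(\xi)$ requires relating the $d$-fold symbol over $K$ to a symbol over $k(\xi)$, and ``reciprocity \ref{HS4} propagates this across the component'' does not supply that bridge; and (ii) the concluding ``faithfully flat descent along $\phi_X$'' of the condition $a_\xi\in\tF(\hen X \xi,\xi)$ is not a descent statement available anywhere in the paper and would itself need proof. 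So your proposal gives a plausible reduction skeleton, but the actual content of the proposition remains unproved within it.
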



\section{Logarithmic cohomology of reciprocity sheaves}\label{logcoh}

For $\sX=(X,D)\in \ulMCorls$, we write $\sX_{\red}=(X,|D|)\in \ulMCorls$.
We say $\sX=(X,D)\in \ulMCorls$ is reduced if $\sX=\sX_{\red}$.

\begin{defn}\label{def:CItsp-log}
Let $F\in \ulMPST$.
\begin{itemize}
\item[(1)]
We say that $F$ is \emph{log-semipure} if for any $\sX\in \ulMCorls$,
the map $F(\sX_{\red}) \to F(\sX)$ is injective.
Note that if $F$ is semipure, $F$ is log-semipure.
\item[(2)]
We say that $F$ is \emph{logarithmic} if it is log-semipure and satisfies the condition that for $\sX,\sY\in \ulMCorls$ with $\sX$ reduced and 
$\alpha\in \ulMCorfin(\sY,\sX)$, the image of 
$\alpha^*: F(\sX) \to F(\sY)$
is contained in $F(\sYred)\subset F(\sY)$, where $\sYred=(Y,E_{\red})$ for $\sY=(Y,E)$.
\end{itemize}
Let $\uMPSTlog$ be the full subcategory of $\ulMPST$ consisting of logarithmic objects and put $\uMNSTlog=\uMNST\cap \uMPSTlog$.
\end{defn}

\begin{thm}\label{thm:CItsp-log}
Any $F\in \CItspNis$ is logarithmic, i.e. $\CItspNis\subset \uMNSTlog$.
\end{thm}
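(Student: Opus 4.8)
The plan is to separate the two clauses of Definition \ref{def:CItsp-log}. Log-semipurity is free: a semipure $F$ is log-semipure by the remark recorded in Definition \ref{def:CItsp-log}(1), so the whole content lies in clause (2). Thus I fix $\sX=(X,D)\in\ulMCorls$ reduced, $\sY=(Y,E)\in\ulMCorls$, $\alpha\in\ulMCorfin(\sY,\sX)$ and $a\in F(\sX)$, and must show $\alpha^* a\in F(\sYred)=F(Y,|E|)$. I would first transport the problem into $\RSCNis$. Set $G=\ulomega_! F\in\RSCNis$ and $\tF=\ulomegaCI G\in\CItspNis$. Since $F$ is semipure all the relevant groups embed into interior sections, and Corollary \ref{cor2:purityreducedmodulus}(3) gives $F(\sX)\cong\tF(X,D)$ and $F(\sYred)\cong\tF(Y,|E|)$ on the reduced pairs. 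By naturality of the unit $F\to\ulomegaCI\ulomega_! F$, the image $b$ of $\alpha^* a$ under the semipurity embedding $F(\sY)\hookrightarrow G(Y-|E|)$ equals $\gamma^* a$, where $\gamma=\ulomega(\alpha)\in\Cor(Y-|E|,X-|D|)$ is the underlying finite correspondence and $a$ is viewed in $\tF(X,D)\subseteq G(X-|D|)$. Since $F(\sYred)\hookrightarrow F(\sY)$ is compatible with these embeddings and has image $\tF(Y,|E|)$, the goal becomes: $b=\gamma^* a$ lies in $\tF(Y,|E|)\subseteq G(Y-|E|)$.

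To detect membership in $\tF(Y,|E|)$ I would apply the local-symbol criterion Proposition \ref{lem2;diagonal} to $G$, $b$ and the reduced pair $(Y,|E|)$. Choosing a dense open $U\subseteq Y$ containing the generic points of $|E|$ over which $\alpha$ is represented by correspondences finite over $U$, it suffices to prove, for every function field $K/k$ and every $y=(y_1,\dots,y_d)\in\mc(U_K)$ with $y_1\in|E|_K^{(0)}$, the vanishing
\[(\phi_Y^* b,\beta)_{U_K/K,y}=0\qfor \forall\,\beta\in K^M_d(\sO^h_{Y_K,y},\fm_y).\]

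I would evaluate this by moving the symbol through $\gamma$. Decomposing $\gamma=\sum_i n_i\Gamma_i$ into prime correspondences finite over $Y$ and letting $p_i\colon\overline{\Gamma_i}^N\to Y$, $q_i\colon\overline{\Gamma_i}^N\to X$ be the two projections from the normalized closure, the transfer $\Gamma_i^*$ factors as $p_{i*}q_i^*$. Feeding this into the functoriality \ref{HS5} of the higher local symbols — pushforward as a sum over chains above $y$, pullback as a Milnor-$K$ norm — yields
\[(\phi_Y^* b,\beta)_{U_K/K,y}=\sum_i n_i\!\!\sum_{p_i(z)=y}\!\!\big(\phi_X^* a,\ \Nm_{z/q_i(z)}\,i_{y/z}(\beta)\big)_{X_K/K,\,q_i(z)},\]
so everything is reduced to local symbols of the original section $a$ on $X$.

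Finally, the point is that reduced ramification is preserved under these transfer operations. The inclusion $i_{y/z}$ along the local map $p_i$ sends $K^M_d(\sO^h_{Y_K,y},\fm_y)$ into $K^M_d(\sO^h_{\overline{\Gamma_i}^N,z},\fm_z)$, and the norm $\Nm_{z/q_i(z)}$ carries the first unit filtration into the first unit filtration, since symmetric functions of elements of a maximal ideal lie again in the maximal ideal (equivalently, the conductor estimate $\lceil m/e\rceil\geq 1$ keeps level one at level one). Because $D$ is reduced this places each transported symbol in $K^M_d(\sO^h_{X_K,x},I_D\sO^h_{X_K,x})$ with $x=q_i(z)$, whence every term vanishes by \ref{HS3} using $a\in\tF(X,D)$; for chains with $x_1\notin|D|$ the same conclusion follows from regularity of $a$ at $x_1$ and the tame-symbol recursion \ref{HS2}, all entries being units. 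Hence the criterion of Proposition \ref{lem2;diagonal} is met, $b\in\tF(Y,|E|)$, and $\CItspNis\subset\uMNSTlog$. I expect the main obstacle to be making the correspondence-functoriality of the higher local symbols precise: matching maximal chains across the normalization $\overline{\Gamma_i}^N$, controlling the finitely many $z$ over a given $y$, and justifying the norm formula of \ref{HS5} for the possibly non-finite projection $q_i$ at the level of the chain-local rings. Admissibility of $\alpha$ is exactly what guarantees that $b=\gamma^* a$ has its ramification supported along and bounded by $|E|$, legitimizing the reduction to boundary chains; checking that this admissibility is compatible with the conductor filtration through the norm maps is the technical heart of the argument.
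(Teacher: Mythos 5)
Your setup is sound and matches the paper's opening move: reducing to $F=\ulomegaCI G$ for $G\in\RSCNis$ via Corollary \ref{cor2:purityreducedmodulus}(3), and aiming to verify membership in $\tF(Y,|E|)$ through the local-symbol criterion of Proposition \ref{lem2;diagonal}. But the core of your argument --- transporting the symbol $\beta$ through the correspondence by writing $\Gamma_i^*=p_{i*}q_i^*$ and invoking \ref{HS5} twice --- has a genuine gap at the second invocation. The pushforward step along $p_i\colon\overline{\Gamma_i}^N\to Y$ is legitimate, since $\alpha\in\ulMCorfin(\sY,\sX)$ makes $p_i$ finite surjective. The pullback step along $q_i\colon\overline{\Gamma_i}^N\to X$ is not: \ref{HS5} is stated only for \emph{finite surjective} morphisms between integral $K$-schemes of the same dimension $d$, whereas $q_i$ is in general neither finite nor dominant, and $\dim\overline{\Gamma_i}^N=\dim Y$ need not equal $\dim X$. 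Consequently $\Nm_{z/q_i(z)}$ is undefined (the Milnor $K$-groups live in different degrees), and worse, $q_i(z)$ need not even be a maximal chain in $X$ --- entries can collapse or land in wrong codimensions --- so the pairing $(\phi_X^*a,\,\cdot\,)_{X_K/K,\,q_i(z)}$ in your displayed formula does not exist. You flag this yourself as ``the main obstacle,'' but it is not a technicality to be patched: no projection formula of this shape is available among \ref{HS1}--\ref{HS5}, and the entire difficulty of the theorem is concentrated exactly here. A secondary soft spot: even where a norm exists, ``norm preserves the first unit filtration'' gives congruences modulo $\fm_x$, and identifying this with the $I_D$-congruence required by \ref{HS3} uses that the first chain entry is a generic point of the reduced $D$; your transported chains give you no control over this.

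The paper's proof is designed precisely to avoid any symbol-transport along the non-finite leg of the correspondence. It first Nisnevich-localizes (using the purity sequence of Corollary \ref{cor:purityreducedmodulus} to replace $Y$ by a neighborhood of a generic point of $E$) and uses Lemma \ref{lem:contraction} to reduce the target to the model case $(X,D)=(\A^1,0)^{\otimes r}\otimes(S,\emptyset)$. It then factors $\alpha^*=g_*\circ f^*$ through the normalization $Y'$ of $\overline{\alpha}$ (arranged smooth, condition $(\spadesuit)$); since $g_*$ maps $F(Y',E_{\red}\times_Y Y')$ into $F(Y,E_{\red})$ by functoriality of $F$ --- no symbol formula needed --- the problem becomes one about a \emph{morphism} $f$, and further about a \emph{section} $i$ of the projection $\A^r\times Y\to Y$. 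Only then is Proposition \ref{lem2;diagonal} applied, and the symbol computation is done by hand on $\A^r\times Y_K$: writing $i(Y)=\bigcap_i\{x_i-u_i\pi^{m_i}=0\}$ and building the explicit symbol with entries $\frac{u_j\pi^{m_j}-x_j}{u_j\pi^{m_j}}$, the vanishing is pushed step by step from the divisor $\{x_1=0\}$ through the graphs $Z_1,\dots,Z_r$ down to $Y_K$ using only \ref{HS2}, \ref{HS3} and the reciprocity sum \ref{HS4} --- all on a single ambient scheme, where every chain is maximal by construction. This explicit mechanism, in which the multiplicities $m_i$ of the graph against the boundary enter the symbols directly, is the replacement for the norm formula your proposal would need; without it, your argument does not close.
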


We need a preliminary for the proof of the theorem.

\begin{lemma}\label{lem:contraction}
Let $F\in \CItspNis$.
Let $\A^n_K=\Spec K[x_1,\dots,x_n]$ be the affine space over a function field $K$ over $k$ and $\sX=\Spec K\{x_1,\dots,x_n\}$ be the henselization of $\A^n_K$ at the origin.
Let $L_i=\{x_i=0\}\subset \A^n$ and $\sL_i=L_i\times_{\A^n_K} X$ for $i\in [1,n]$.
For an integer $0<r\leq n$, the natural map 
\[ K\{x_{r+1},\dots,x_{n}\} [x_1,\dots,x_r] \to K\{x_1,\dots,x_n\}\]
induces a map in $\ulMCorls$:
 \[\rho_r:
(\sX,\sL_1+\cdots +\sL_r) \to (\A^r_S,\{x_1\cdots x_r=0\})\simeq
(\A^1,0)^{\otimes r}\otimes (S,\emptyset) ,\]
where $S=\Spec K\{x_{r+1},\dots,x_{n}\}$. It induces 
\begin{equation}\label{eq1;lem:contraction}
\rho_r^*:  F(\A^r_S,\{x_1\cdots x_r=0\} ) \to  F(\sX,\sL_1+\cdots + \sL_r)
 \end{equation}
Then $F(\sX,\sL_1+\cdots + \sL_r) $ is generated by the image of $\rho_r^*$ and 
\[F(\sX,\sL_1+\cdots+\overset{\vee}{\sL_i}+\cdots \sL_{r}) \qfor i=1,\dots, r.\]
\end{lemma}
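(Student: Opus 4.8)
The plan is to prove the statement by induction on $r$, peeling off the divisor $\sL_r$ one variable at a time and matching the henselian-local situation on $\sX$ with the polynomial model $\A^r_S$ through the naturality of the localization sequences of \cite[Lem.~7.1]{shuji}. Throughout I use that $\sX$, $S$ and each $\sL_i=\{x_i=0\}$ are henselian local, so that taking global sections of a short exact sequence of Nisnevich sheaves on them is exact. First I would reformulate the claim: writing $D=\sL_1+\cdots+\sL_r$ and $D_i=D-\sL_i$, the drop-one groups map into $F(\sX,D)$ by the modulus-increasing inclusions $F(\sX,D_i)\to F(\sX,D)$ (as $D_i\leq D$), and it suffices to show that the composite
\[ F(\A^r_S,\{x_1\cdots x_r=0\})\xrightarrow{\rho_r^*} F(\sX,D)\to F(\sX,D)\Big/\textstyle\sum_{i=1}^r F(\sX,D_i) \]
is surjective.

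For the inductive step I would apply \cite[Lem.~7.1]{shuji} to the divisor $\sL_r$. On $\sX$ this yields a short exact sequence of Nisnevich sheaves whose quotient is a contraction of $F$ supported on $\sL_r$; since $\sL_r$ occurs in $D$ with multiplicity one, the relevant contraction is $G=\Fcgm$, which lies in $\CItspNis$ by Lemma~\ref{lem:F-1splitting}. Taking sections over the henselian local $\sX$ gives an exact sequence
\[ 0\to F(\sX,D_r)\to F(\sX,D)\xrightarrow{\partial} G\big(\sL_r,\,D_r|_{\sL_r}\big)\to 0, \]
where $\sL_r\simeq\Spec K\{x_1,\dots,\widehat{x_r},\dots,x_n\}$ is the henselization of $\A^{n-1}$ at the origin and $D_r|_{\sL_r}=\sum_{j<r}\sL_j|_{\sL_r}$. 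The kernel is exactly the drop-$r$ group, and by naturality of the sequence in the modulus $\partial$ carries, for $i<r$, the group $F(\sX,D_i)$ onto the corresponding drop-$i$ group of $G$ on $\sL_r$. Hence $\partial$ induces a surjection of the quotient for $(F,\sX,D)$ onto the analogous quotient for $(G,\sL_r)$, and the induction hypothesis applied to $G\in\CItspNis$ with the $r-1$ divisors on $\sL_r$ (whose base $S$ is unchanged, since we removed the variable $x_r$ with $r\leq r$) expresses any class of the latter through the model map $\rho_{r-1}^{*,G}$ for $G$.

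It remains to lift along the model. The sequence of \cite[Lem.~7.1]{shuji} is natural for the henselization $\rho_r\colon\sX\to\A^r_S$, producing a commutative ladder relating $\partial$ to the model residue $\partial^m\colon F(\A^r_S,\{x_1\cdots x_r=0\})\to G(\A^{r-1}_S,\{x_1\cdots x_{r-1}=0\})$ and relating $\rho_r^*$ to $\rho_{r-1}^{*,G}$. Granting that $\partial^m$ is surjective, I lift the model class produced by the induction hypothesis through $\partial^m$, pull it back by $\rho_r^*$, and correct by drop groups read off from the bottom row of the ladder to conclude. The base case $r=1$ is the same argument applied to the sequence $0\to F(\sX,\emptyset)\to F(\sX,\sL_1)\xrightarrow{\partial}\Fcgm(S)\to 0$, with no further induction needed.

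The main obstacle is precisely the surjectivity of the model residue $\partial^m$, equivalently the vanishing of the obstruction class in $H^1(\A^r_S,F_{(\A^r_S,D_r)})$. Here I would exploit the box-product structure $(\A^r_S,\{x_1\cdots x_r=0\})\simeq(\A^1,0)^{\otimes r}\otimes(S,\emptyset)$: splitting off the $x_r$-factor $(\A^1,0)$ via the projection $p$ and the unit section at $x_r=1$ identifies the summand $\Fc$ inside $F(\A^r_S,\{x_1\cdots x_r=0\})$ as the kernel of $i_1^*$, and one checks that $\partial^m$ factors as this split projection onto the $\Fc$-summand followed by the retraction $s_F\colon\Fc\to\Fcgm$ furnished by Lemma~\ref{lem:F-1splitting}; since $s_F$ is surjective, so is $\partial^m$. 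I expect the delicate points to be (i) verifying that the residue of \cite[Lem.~7.1]{shuji} coincides with $s_F$ composed with the box-splitting projection, which is where the retraction of Lemma~\ref{lem:F-1splitting} is essential, and (ii) checking commutativity of the ladder so that the corrections can be carried out on $\sX$ and not merely on the model.
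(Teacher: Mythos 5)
Your induction on $r$ is structurally parallel to the paper's, but the inductive mechanism is genuinely different, and one step of yours has a real gap. The paper peels off $\sL_1$ and, by \cite[Lem.~7.1 and Lem.~5.9]{shuji}, identifies the quotient $F(\sX,\sL_1+\cdots+\sL_r)/F(\sX,\sL_2+\cdots+\sL_r)$ with $F^{(\A^1,0)}(\sL_1,E)/F^{(\A^1,\emptyset)}(\sL_1,E)$, where $E=\sL_1\cap(\sL_2+\cdots+\sL_r)$, the isomorphism being induced by the single morphism $(\sX,\sL_1+\cdots+\sL_r)\to(\A^1,0)\otimes(\sL_1,E)$ of modulus pairs; the induction hypothesis is then applied not to a contraction but to the \emph{twisted} sheaf $F^{(\A^1,0)}$, which lies in $\CItspNis$ by \cite[Lem.~1.5(3)]{brs}, over the henselian local $\sL_1$, and the model groups telescope via $(F^{(\A^1,0)})^{(\A^1,0)^{\otimes r-1}}(S)=F^{(\A^1,0)^{\otimes r}}(S)$. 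Because every comparison in the paper happens over a henselian local base and through one map of pairs, no ladder, no lifting, and no exactness question on a non-henselian scheme ever arises. You instead run the induction through the contraction $G=\Fcgm$ on $\sL_r$ (legitimate: $\Fcgm$ is a direct summand of $\Fc$ by Lemma~\ref{lem:F-1splitting}, hence lies in $\CItspNis$), which forces you to compare the henselian residue $\partial$ with the model residue $\partial^m$ on $\A^r_S$, where taking sections of the sequence of \cite[Lem.~7.1]{shuji} is no longer automatically exact.

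The gap is your justification of the surjectivity of $\partial^m$. The retraction $s_F$ of Lemma~\ref{lem:F-1splitting} is produced abstractly from \cite[Lem.~2.4]{brs}; nothing identifies it with the geometric residue of \cite[Lem.~7.1]{shuji}, so your point (i) is not merely ``delicate'': as an equality of maps it has no proof and may simply fail, and even the weaker claim that the residue restricted to the split summand $\Fc(T')=\Ker(i_1^*)\subset F((\A^1,0)\otimes T')$, with $T'=(\A^{r-1}_S,\{x_1\cdots x_{r-1}=0\})$, is \emph{some} retraction of $\Fcgm(T')\subset\Fc(T')$ could only be checked on henselian stalks after proving that the residue is natural in the modulus-pair variable $T'$ --- a functoriality you also assume, without argument, for the ladder and the drop-group bookkeeping. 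Fortunately the surjectivity itself can be secured without any identification: the obstruction to lifting through $\partial^m$ lies in $H^1_\Nis\bigl(\A^r_S,F_{(\A^r_S,\{x_1\cdots x_{r-1}=0\})}\bigr)$, and this group vanishes by \cite[Lem.~2.11]{brs} because $S$ is henselian local --- the same vanishing this paper invokes in the proof of Lemma~\ref{lem:blowup-logA2}. With that substitution for your $s_F$-factorization, and granting the (plausible but unproven in your sketch) naturality of \cite[Lem.~7.1]{shuji} in the modulus, your correction-by-drop-groups chase does close; the paper's twist formulation simply buys all of these compatibilities for free.
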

\begin{proof}
For $\sY\in \ulMCor$, let $F^\sY\in \ulMPST$ be defined by 
$F^\sY(\sZ)=F(\sY\otimes\sZ)$. By \cite[Lem. 1.5(3)]{brs}, $F^\sY\in \CItspNis$ for
$F\in \CItspNis$. 
We prove the lemma by the induction on $r$. The case $r=1$ holds since by \cite[Lem. 7.1 and Lem 5.9]{shuji}, $\rho_1$ induces an isomorphism 
\[ F^{(\A^1,0)}(S)/ F^{(\A^1,\emptyset)}(S) \isom F(\sX,\sL_1)/F(\sX).\]
By definition $\sL_1=\Spec K\{x_2,\dots,x_n\}$ and we have a map in $\ulMCor$:
 \[(\sX,\sL_1+\cdots +\sL_r) \to (\A^1,0)\otimes(\sL_1,\sL_1\cap (\sL_2+\cdots +\sL_r))\]
induced by the natural map $K\{x_2,\dots,x_{n}\} [x_1] \to K\{x_1,\dots,x_n\}$. 
By \cite[Lem. 7.1 and Lem 5.9]{shuji}, it induces an isomorphism
\[ F^{(\A^1,0)}(\sL_1,E)/ F^{(\A^1,\emptyset)}(\sL_1,E) \isom
 F(\sX,\sL_1+\cdots +\sL_r)/F(\sX,\sL_2+\cdots +\sL_r)\] 
with $E=\sL_1\cap (\sL_2+\cdots +\sL_r)$. By the induction hypothesis,
$F^{(\A^1,0)}(\sL_1,E)$ is generated by $F^{(\A^1,0)}(\sL_1,E_j)$ with
$E_j=\sL_1\cap(\sL_2\cdots+\overset{\vee}{\sL_j}+\cdots \sL_{r})$ for $j=2,\dots, r$
together with the image of the map
\[ (F^{(\A^1,0)})^{(\A^1,0)^{\otimes r-1} }(S) =F^{ (\A^1,0)^{\otimes r} }(S) \to F^{(\A^1,0)}(\sL_1,E)\]
induced by 
 \[(\sL_1,E) \to (\A^{r-1}_{S},\{x_2\cdots x_r=0\})\simeq(\A^1,0)^{\otimes r-1}\otimes (S,\emptyset) \]
coming from the map $K\{x_{r+1},\dots,x_n\}[x_2,\dots,x_r] \to K\{x_2,\dots,x_d\}$. This proves the lemma.
\end{proof}

\def\alphab{\overline{\alpha}}
\medbreak\noindent{\it Proof of Theorem \ref{thm:CItsp-log}} :
By Corollary \ref{cor2:purityreducedmodulus}(3), we may assume $F=\ulomegaCI G$ for $G\in \RSCNis$. Take $\sX=(X,D),\sY=(Y,E)\in \ulMCorls$ with $\sX$ reduced and let $\alpha\in \ulMCorfin(\sY,\sX)$ be an elementary correspondence. We need to show that $\alpha^*(F(\sX))\subset F(\sY_{\red})$.
The question is Nisnevich local over $X$ and $Y$.
Hence we may assume $(X,D)=(\sX,\sL_1+\cdots +\sL_r)\in \uMCor^{\rm pro}$ under the notation from Lemma \ref{lem:contraction}. If $r=0$, we have $\alpha\in \ulMCor((Y,\emptyset),(X,\emptyset))$ by the assumption  $\alpha\in  \ulMCorfin(\sY,\sX)$ so that
\[\alpha^*(F(\sX))=\alpha^*(F(X,\emptyset)) \subset F(Y,\emptyset)\subset F(\sY_{\red}).\]
Assume $r>0$ and proceed by the induction on $r$. By Lemma \ref{lem:contraction},
we may assume then 
\[(X,D)= \sM:=(\A^1,0)^{\otimes r}\otimes(S,\emptyset) \qfor S\in \Sm.\]
On the other hand, by Corollary \ref{cor:purityreducedmodulus}, we have an exact sequence
\[ 0\to F(Y,E_{\red}) \to F(Y-E_{\red},\emptyset) \to 
\underset{\xi\in \cd E0}{\bigoplus} \frac{F(\hen Y \xi-\xi,\emptyset)}{F(\hen Y \xi,\xi)}.\]
Hence we may replace $Y$ with its Nisnevich neighborhood of a generic point $\xi$ of $E$. Using the assumption that $k$ is perfect, we may then assume the following condition $(\spadesuit)$. Recall that $\alpha$ is by definition an integral closed subschem of $(Y-E)\times (X-D)$ finite surjective over $Y-E$ and its closure $\alphab$ in $Y\times X$ is finite surjective over $Y$.
\begin{enumerate}
\item[$(\spadesuit)$]
$X$ and $Y$ are irreducible and $\alpha$ is smooth.
The normalization $Y'$ of $\alphab$ is smooth and $E':=E\times_Y Y'\subset Y'$ is irreducible and $E'_{\red}$ is smooth.
\end{enumerate}
Let $g: Y'\to Y \qaq f: Y'\to X$ be the induced maps. We have
$E'=g^*E \geq f^*D$ as Cartier divisors on $Y'$ by the modulus condition for $\alpha$. Hence these maps induce
\[ F(X,D)\rmapo{f^*} F(Y',E') \rmapo{g_*} F(Y,E).\]
We claim that 
$\alpha^*:F(X,D)\to F(Y,E)$ agrees with this map.
Indeed, this follows from the equality 
\[ \Gamma_f \circ ^t\Gamma_g= \alpha \in \Cor(Y-E,X-D),\]
where
$^t\Gamma_g\in \Cor(Y-E,Y'-E')$ is the transpose of the graph of $g$ and
$\Gamma_f\in \Cor(Y'-E',X-D)$ is the graph of $f$.
By definition this follows from the equality
\[^t\Gamma_g\times_{Y'-E'} \Gamma_f =\alpha\subset (Y-E)\times (X-D)\]
which one can check easily noting $Y'\to \alphab$ is an isomorphism over $\alpha$  assumed to be smooth.
Then we get a commutative diagram
\[\xymatrix{
&F(Y',E'_{\red}) \ar[d]^{\hookrightarrow}\\
&F(Y',E_{\red} \times_Y Y') \ar[r]^-{g_*} \ar[d]^{\hookrightarrow} 
& F(Y,E_{\red}) \ar[d]^{\hookrightarrow} \\
F(X,D)\ar[r]^{f^*} &F(Y',E') \ar[r]^-{g_*} & F(Y,E)  \\}\]
where the top inclusion comes from $E_{\red} \times_Y Y'\geq E'_{\red}$ as Cartier divisors on $Y'$ thanks to the sempurity of $F$ (cf. \S\ref{preliminaries}\eqref{semipure}). 
Hence it suffices to show $f^*(F(X,D))\subset F(Y',E'_{\red})$. By replacing $(Y,E)$ with $(Y',E')$, we may now assume that $\alpha$ is induced by
a morphism $f: Y\to X=\A^r\times S$.
Then $\alpha$ factors in $\ulMCor$ as 
\[(Y,E) \to (\A^1,0)^{\otimes r}\otimes (Y,\emptyset) \to
 (\A^1,0)^{\otimes r}\otimes(S,\emptyset).\]
where the first map is induced by the map
\[i=(pr_{\A^r}\circ f,id_Y): Y \to \A^r\times Y,\]
and the second induced by 
\[id_{\A^r}\times (pr_S\circ f): \A^r\times Y \to \A^r\times S.\]
Note that $i$ is a section of the projection $\A^r\times Y \to Y$.
Thus we are reduced to showing  
$i^*(F((\A^1,0)^{\otimes r}\otimes(Y,\emptyset))\subset F(Y,E_{\red})$. By Proposition \ref{lem2;diagonal} this follows from the following.
\def\zdel{z(\delta)}
\def\wdel{w(\delta)}

\begin{claim}
Assume $F=\ulomegaCI G$ for some $G\in \RSCNis$.
Take $a\in F((\A^1,0)^{\otimes r}\otimes(Y,\emptyset))$.
Let $K$ be a function field over $k$.
After replacing $Y$ by an open subset containing $\xi$, we have 
\[(i^*(a)_K,\gamma)_{Y_K/K,\delta}=0\qfor \forall \gamma\in K^M_e(\sO^h_{Y_K,\delta},\fm_\delta)\]
and for any $\delta=(\xi, \delta_1,\dots,\delta_{e-1})\in \mc(Y_K)$, where $e=\dim(Y)$ and $\xi\in E$ is the generic point, and
\[ (-,-)_{Y_K/K,\delta}: F(K(Y))\otimes K^M_d(\kh {Y_K} \delta)\to F(K)\]
is from \eqref{HLC}.
\end{claim}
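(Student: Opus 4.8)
The plan is to verify the hypothesis of Proposition~\ref{lem2;diagonal} for $i^*(a)$ by computing the higher local symbol $(i^*(a)_K,\gamma)_{Y_K/K,\delta}$ through a reciprocity law (HS4) on a relative affine line: the idea is to exhibit this symbol as one of the terms of a reciprocity sum attached to the graph of $i$, and to force all the other terms to vanish, either because the relevant entries become modulus-relative (so (HS3) applies) or because the relevant symbol is a unit symbol (so (HS2) gives zero). First I would reduce to $r=1$. Writing $i=(g_1,\dots,g_r,\id_Y)$ and using the tensor decomposition $(\A^1,0)^{\otimes r}\otimes(Y,\emptyset)$ together with the factorization of $i$ through the successive sections $x_j=g_j$, one peels off a single $(\A^1,0)$-factor at a time exactly as in Lemma~\ref{lem:contraction}. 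So assume $a\in F((\A^1,0)\otimes(Y,\emptyset))$ and $i=(g,\id_Y)\colon Y\to W:=\A^1\times Y$ with $g\in\sO(Y)$ vanishing along the component of $E$ with generic point $\xi$. Let $p\colon W\to Y$ be the projection, $\Gamma=\{x_1=g\}$ the graph (so $p|_\Gamma$ identifies $\Gamma\cong Y$ and $a|_\Gamma=i^*a$), and let $\eta'$, $\zeta_0$, $\zeta_E$ be the generic points of $\Gamma$, of $\{x_1=0\}$, and of $\A^1\times\overline{\{\xi\}}$. After shrinking $Y$ around $\xi$ I may assume these are the only prime divisors through the codimension-two point $i(\xi)=(0,\xi)$ that carry either ramification of $a$ or a residue of the symbol constructed below, and that the entries of $\gamma$ are represented by functions that are units along $\overline{\{\xi\}}$.

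Next I set up the reciprocity on $W_K$. Writing $\gamma=\{1+b,u_1,\dots,u_{e-1}\}$ with $b\in\fm_\delta$, I put the interpolating factor $\phi=1+b\,x_1/g$ and
\[\beta=\{x_1-g\}\cup\{\phi,\;p^{*}u_1,\dots,p^{*}u_{e-1}\}\in K^M_{e+1}(\kh{W_K}{i(\delta)}).\]
Here the chain $i(\delta)=(i(\xi),i(\delta_1),\dots,i(\delta_{e-1}))$ is a $Q$-chain with break at $r=0$, and $B(i(\delta))$ is the set of prime divisors through $i(\xi)$. Since $e\ge 1$ we have $r<d-1$, so (HS4) yields $\sum_{y}(a,\iota_y\beta)_{W_K/K,(y,i(\delta))}=0$. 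The factor $\phi$ is engineered so that it restricts to $1+b$ along $\Gamma$ (where $x_1=g$) but lies in $1+(x_1)\sO^h$ along $\{x_1=0\}$ (where $g$ is a unit).

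Now I analyze the surviving terms. At $\eta'$ the uniformizer $x_1-g$ has valuation one and the other entries are units, so $\partial_{\eta'}\beta=\{1+b,u_1,\dots,u_{e-1}\}=\gamma$; since $a$ is regular at $\eta'\notin\{x_1=0\}$ and $a|_\Gamma=i^*a$, property (HS2) identifies this term with $(i^*(a)_K,\gamma)_{Y_K/K,\delta}$, the quantity we want. At $\zeta_0$ one has $x_1-g=-g\in\sO^{h\times}$ and $\phi\in 1+(x_1)\sO^h$, so after reordering $\iota_{\zeta_0}\beta\in K^M_{e+1}(\sO^h_{W_K,(\zeta_0,i(\delta))},\,I_{\{x_1=0\}}\sO^h)$; since $a\in\tF(W,\{x_1=0\})$ this term vanishes by (HS3). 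All remaining prime divisors through $i(\xi)$ are pulled back from $Y$, where $\beta$ is a unit symbol and $a$ is regular, so those terms vanish by (HS2). Thus reciprocity collapses to
\[(i^*(a)_K,\gamma)_{Y_K/K,\delta}=-(a,\iota_{\zeta_E}\beta)_{W_K/K,(\zeta_E,i(\delta))}.\]

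The main obstacle is the remaining term at $\zeta_E$. Because the graph $\Gamma$ and the modulus section $\{x_1=0\}$ meet \emph{precisely} over $E$, where $g=0$, the interpolating factor $\phi=1+b\,x_1/g$ necessarily acquires a pole along $\A^1\times\overline{\{\xi\}}$; computing $\partial_{\zeta_E}\beta$ and applying (HS2) (here $a$ is regular at $\zeta_E$) rewrites this contribution as $(a|_{\A^1\times\overline{\{\xi\}}},\pm\{x_1,\bar u_1,\dots,\bar u_{e-1}\})$ on the base $\overline{\{\xi\}}$ of dimension $e-1$, with first slot the modulus uniformizer $x_1$. I would dispose of it by descending induction on $e=\dim Y$: the restriction $a|_{\A^1\times\overline{\{\xi\}}}$ again has reduced modulus along $\{x_1=0\}$, and pairing it with a symbol whose first entry is $x_1$ extracts, via the splitting $s_F$ of Lemma~\ref{lem:F-1splitting} and the purity isomorphism of Corollary~\ref{cor;purityM}, the residue $\tauu 1 F$ of $a$ along $x_1=0$ (Definition~\ref{def;tauu}); this residue is again of the form $\ulomegaCI(\cdot)$, so the inductive hypothesis on dimension applies to it, the base case being a direct one-dimensional computation on the fibre over $\xi$ in which reduced modulus of $a$ gives the vanishing outright. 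The genuinely delicate point throughout is the simultaneous calibration of $\beta$: its residue at $\Gamma$ must be exactly $\gamma$, whose $(1+b)$-entry lives on $Y$, while its localization at $\{x_1=0\}$ must land in the modulus-relative Milnor group; this tension is resolvable only through the twist $1+b\rightsquigarrow 1+b\,x_1/g$, which is precisely what generates the auxiliary vertical contribution handled by the recursion.
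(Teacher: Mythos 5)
Your overall skeleton---realize $(i^*(a)_K,\gamma)_{Y_K/K,\delta}$ as one term of an (HS4)-reciprocity sum on $\A^1\times Y_K$ attached to the graph of $i$, kill the term along $\{x_1=0\}$ by (HS3) and the generic terms by (HS2)---is the same as the paper's. But there are two genuine gaps. First, the reduction to $r=1$ does not work as stated: if you substitute $x_j=g_j$ one variable at a time, the intermediate pullbacks acquire modulus $\divi(g_j)\geq m_j E_{\red}$ with multiplicities $m_j>1$ in general, so the hypothesis that the remaining section lies in $F((\A^1,0)^{\otimes (r-1)}\otimes(Y,\emptyset))$ (or even has reduced modulus on the $Y$-factor) is not available at the next step. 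Knowing that pullbacks land in the \emph{reduced}-modulus subgroup is precisely what the Claim (and Theorem \ref{thm:CItsp-log}) is proving, so this reduction is circular; moreover Lemma \ref{lem:contraction} is a generation statement for sections over henselian local pairs and does not factor $i^*$ through partial sections. The paper avoids this by treating all $r$ variables at once: its symbol $\beta$ carries the $r$ graph factors simultaneously and is peeled off by $r$ successive (HS4)/(HS2) steps along the partial graphs $Z_j$, the new boundary term at $Z_{j-1}\cap\{x_j=0\}$ being killed by (HS3) because $Z_{j-1}$ meets $\{x_j=0\}$ transversally (this is where the preparation $(\spadesuit)$ enters).

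Second, your ``main obstacle'' at $\zeta_E$ is an artifact of your choice of $\beta$, and the recursion you propose to dispose of it is unproven. Writing $g=u\pi^{m}$ with $u$ a unit and $\pi$ a local equation of $E_{\red}$ at $\xi$, the paper takes the first block of $\beta$ to be $\iota(\gamma)$ itself and normalizes the graph factor to $(u\pi^{m}-x_1)/(u\pi^{m})=1-x_1/(u\pi^{m})$: this factor still has residue $\gamma$ along the graph, still lies in $1+x_1\sO$ at the generic point of $\{x_1=0\}$ so (HS3) applies there, and along $\{\pi=0\}$ the residue of $\beta$ contains the entry $\overline{1+b}=1$ (since $b\in\fm_\delta$ vanishes along $E$), so $\partial_{\zeta_E}\beta=0$ and the vertical contribution vanishes outright---no induction needed. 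Your twist $\phi=1+b\,x_1/g$ transfers the pole onto the entry that the paper keeps equal to $1+b$, creating the genuinely nonzero residue $\pm s\{\bar x_1,\bar u_1,\dots,\bar u_{e-1}\}$ ($s$ the pole order of $\phi$ along $E$); to kill it you would need a compatibility of the pairings \eqref{HLC} with the residue $\tauu 1$ and the purity isomorphisms of Corollary \ref{cor;purityM}, plus the applicability of the symbol machinery to $\tauu 1 F$ (which Lemma \ref{lem:F-1splitting} only places in $\CItspNis$; it is not exhibited as $\ulomegaCI$ of a reciprocity sheaf, so \eqref{HLC} and Proposition \ref{lem2;diagonal} do not directly apply to it). None of this is in the paper's toolkit (HS1)--(HS5), none of it is supplied by you, and the base case of your descending induction is left unspecified. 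With the paper's normalization your $r=1$ computation closes immediately, and the same $\beta$ with all $r$ factors $1-x_j/(u_j\pi^{m_j})$ handles general $r$, which also makes the problematic reduction of the first paragraph unnecessary.
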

\begin{proof}
Write
\[\A^r\times Y=\Spec A[x_1,\dots,x_r],\;
 (\A^1,0)^{\otimes r}\otimes(Y,\emptyset) =(\A^r_Y,\{x_1\cdots x_r=0\}).\]
After replacing $Y$ by an open subset containing $\xi$, we can write
\[ i(Y) =\underset{1\leq i\leq r}{\bigcap} \{x_i-u_i\pi^{m_i}=0\}
\qwith m_i\in \Z_{\geq 0},\; u_i\in A^\times.\]
Let $\delta=(\xi, \delta_1,\dots,\delta_{e-1})$ be as in the claim and put $\delta'=(\delta_1,\dots,\delta_{e-1})\in \mc((E_{\red})_K)$.
Let $X_K=\A^r\times Y_K$ and $z_j$ for $1\leq j\leq r$ be the generic point of 
\[ Z_j=\underset{1\leq i\leq j}{\bigcap} \{x_i-u_i\pi^{m_i}=0\} \subset X_K ,\]
and $w_j$ for $1\leq j\leq r$ be the generic point of 
\[W_j=\{\pi=x_1=\cdots=x_j=0\} = F\cap Z_j \qwith F=\{\pi=0\}\subset X_K .\]
The section $i$ induces isomorphisms
\eq{iZrWr}{Y_K\simeq Z_r \qaq 
 (E_{\red})_K\simeq W_r.}
\def\tsigma{\tilde{\sigma}}
Let $\sigma=(\eta_1,w_1,\dots,w_r,i(\delta'))\in \mc(X_K)$, where $\eta_1$ is the generic point of $D_1=\{x_1=0\}\subset X_K$ and $i(\delta')\in \mc(W_r)$ is the image of $\delta'$ under \eqref{iZrWr}. Let $\tsigma$ be the chain $(w_r,i(\delta'))$ in $X_K$. 
By the definition of henselization along chains, the projection $X=\A^r\times Y \to Y$ induces $\iota: \sO^h_{Y_K,\delta} \to \sO^h_{X_K,\tsigma}$. 
Take any $\gamma\in K^M_e(\sO^h_{Y_K,\delta},\fm_\delta)$ and put 
\begin{equation}\label{eq;beta}
 \beta=\{\iota(\gamma),\frac{u_1\pi^{m_1}-x_1}{u_1\pi^{m_1}},\dots,\frac{u_r\pi^{m_r}-x_r}{u_r\pi^{m_r}}\}\in K^M_d(K^h_{X_K,\tsigma})\;(d=e+r),\end{equation}
For $a\in F((\A^1,0)^{\otimes r}\otimes(Y,\emptyset))$ and its restriction 
$a_K\in F((\A^1,0)^{\otimes r}\otimes(Y_K,\emptyset))$, we have\footnote{By abuse of notation, for $\rho\in \mc(X_K)$ containing $\tsigma$ as a sub-chain,  
$\beta$ in $(a_K,\beta)_{X_K/K,\rho}$ denotes the image of $\beta$ from \eqref{eq;beta} under the natural map 
$K^M_d(K^h_{X_K,\tsigma})\to K^M_d(K^h_{X_K,\rho})$. The same convention for 
$\beta_1$ and $\beta_2$ below.}
\[ \begin{aligned}
0=(a_K,\beta)_{X_K/K,\sigma}
&=-\underset{\substack{\tau\in X_K^{(1)}\\\tau>w_1,\tau\not=\eta_1}}{\sum}
(a_K,\beta)_{X_K/K,(\tau,w_1,\dots,w_r,i(\delta'))} \\
&=-(a_K,\beta)_{X_K/K,(z_1,w_1,\dots,w_r,i(\delta'))} \\
&=\pm ((a_K)_{|Z_1},\beta_1)_{Z_1/K,(w_1,\dots,w_r,i(\delta'))},
\\\end{aligned}\]
\[ \beta_1=\{\iota_1(\gamma),\frac{u_2\pi^{m_2}-x_2}{u_2\pi^{m_2}},\dots,\frac{u_r\pi^{m_r}-x_r}{u_r\pi^{m_r}}\}\in K^M_{d-1}(K^h_{Z_1,\tsigma_1}))\]
where $\tsigma_1$ is the chain $(w_r,i(\delta'))$ in $Z_1$ and 
$\iota_1: \sO^h_{Y_K,\delta} \to \sO^h_{Z_1,\tsigma_1}$ is induced by 
$Z_1\hookrightarrow X_K \to Y_K$.
The first equality follows from \S\ref{HLS} \ref{HS3} applied to $D_1\subset X_K$ noting that $\beta$ lies in $K^M_d(\sO^h_{X_K, \sigma},\fm_{\sigma})$ since 
$(u_1\pi^{m_1}-x_1)/u_1\pi^{m_1} \in 1+ x_1 \sO_{X_K,\eta_1}$. The second follows from \ref{HS4} applied to $x=(w_1,\dots.w_r,i(\delta'))\in Q_0(X_K)$.
The third equality holds since $z_1$ is the unique $\tau\in X_K^{(1)}-\{\eta_1\}$ such that $\tau>w_1$ and $(a_K,\beta)_{X_K/K,(\tau,w_1,\dots,w_r,i(\delta'))} $ may not vanish, which follows from \ref{HS2} noting $\iota(\gamma)_{|F}=0$. 
Finally the last equality follows from \ref{HS2}.
We further get
\[ \begin{aligned}
0=((a_K)_{|Z_1},\beta_1)_{Z_1/K,(w_1,w_2,\dots,w_r,i(\delta'))} 
&=-\underset{\substack{\tau\in Z_1^{(1)}\\\tau>w_2,\tau\not=w_1}}{\sum}
((a_K)_{|Z_1},\beta_1)_{Z_1/K,(\tau,w_2,\dots,w_r,i(\delta'))}  \\
&=-((a_K)_{|Z_1},\beta_1)_{Z_1/K,(z_2,w_2,\dots,w_r,i(\delta'))}  \\
&=\pm ((a_K)_{|Z_2},\beta_2)_{Z_2/K,(w_2,\dots,w_r,i(\delta'))} ,\\\end{aligned}\]
\[ \beta_2=\{\iota_2(\gamma),\frac{u_3\pi^{m_3}-x_3}{u_3\pi^{m_3}},\dots,\frac{u_r\pi^{m_r}-x_r}{u_r\pi^{m_r}}\}\in K^M_{d-1}(K^h_{Z_2,\tsigma_2})),\]
where $\tsigma_2$ is the chain $(w_r,i(\delta'))$ in $Z_2$ and 
$\iota_2: \sO^h_{Y_K,\delta} \to \sO^h_{Z_2,\tsigma_2}$ is induced by 
$Z_2\hookrightarrow X_K \to Y_K$.
The above equalities hold by the same arguments as above except that 
for the third equality, there are a priori two $\tau\in Z_1^{(1)}-\{w_1\}$ with $\tau>w_2$ for which $((a_K)_{|Z_1},\beta_1)_{Z_1/K,(\tau,w_2,\dots,w_r,i(\delta'))}$ may not vanish. One is $z_2$ and another is the generic point $\eta_2$ of $Z_1\cap D_2$ with $D_2= \{x_2=0\}\subset X_K$, but 
$((a_K)_{|Z_1},\beta_1)_{Z_1/K,(\eta_2,w_2,\dots,w_r,i(\delta'))}=0$.
Indeed, 
$(a_K)_{|Z_1}\in F(\Spec(\sO_{Z_1,\eta_2}),\eta_2)$ since $Z_1$ and $D_2$ intersect transversally in $X_K$. Hence the vanishing follows from \ref{HS3} applied to $Z_1\cap D_2\subset Z_1$ noting $\beta_1\in K^M_d(\sO_{Z_1,(\eta_2,\tsigma_1)},\fm_{\eta_2})$.
Repeating the same arguments, we finally get
\[ 0=((a_K)_{|Z_r},\iota_r(\gamma))_{Z_r/K,(w_r,i(\delta'))} =((a_K)_{|Y_K},\gamma)_{Y_K/K,\delta},\]
where $\iota_r: \sO^h_{Y_K,\delta} \to \sO^h_{Z_r,(w_r,i(\delta'))}$ is induced by $Z_r\hookrightarrow X_K \to Y_K$ and the second equality follows from \eqref{iZrWr}.
This completes the proof of the claim and Theorem \ref{thm:CItsp-log}.
\end{proof}


\begin{defn}\label{def:logcoh}
For $F\in\uMNSTlog$ and an integer $i\geq 0$, consider the association
\[ \Hlog i(-,F) : \uMCorfinls \to \Ab\;;\; (X,D) \to H^i(X_\Nis, F_{(X,D_{\red})}).\]
By the definition this gives a presheaf on $\uMCorfinls$, which we call  
\emph{the $i$-th logarithmic cohomology with coefficient $F$}.
\end{defn}

\section{Invariance of logarithmic cohomology under blowups}\label{invariance-bu}
\def\Lfinls{\Lambda^{\fin}_{ls}}

Let the notation be as in \S\ref{logcoh}.
Let $\Lfinls$ be the subcategory of $\uMCorfinls$ whose objects are the same as $\uMCorfinls$ and whose morphisms are those $\rho: (Y,E) \to (X,D)$ where $E=\rho^* D$ and $\rho$ are induced by blowups of $X$ in smooth centers $Z\subset D$ which are normal crossing to $D$ in the following sense: For any point $x$ of $D$, there exists a system $z_1, \cdots, z_d$ of regular parameters of $X$ at $x$ 
satisfying the conditions:
\begin{itemize}
\item Locally at $x$, $Z =\{z_1= \dots = z_r=0\}$ with $r=\codim_{X} Z$.
\item Locally at $x$, $|D| = \{\prod_{j \in J} z_j=0 \}$ 
for some $J\subset \{1,\dots,r\}$.
\end{itemize}

\begin{thm}\label{thm:CItsp-loginv}
For $F\in \CItspNis$ and $\rho:\sY\to \sX$ in $\Lfinls$, we have
\begin{equation}\label{eq1;thm:CItsp-loginv}
 \rho^*: \Hlog i(\sX,F) \cong \Hlog i(\sY,F) \qfor \forall i\geq 0.\end{equation}
\end{thm}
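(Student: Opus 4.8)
The plan is to realize \eqref{eq1;thm:CItsp-loginv} as a statement about the higher direct images of $\rho$. Since $\Hlog i(\sX,F)=H^i(X_\Nis,F_{(X,D_{\red})})$ is ordinary Nisnevich cohomology and $\rho\colon Y\to X$ is proper, the Leray spectral sequence
\[E_2^{p,q}=H^p\big(X_\Nis,\,R^q\rho_* F_{(Y,E_{\red})}\big)\Rightarrow H^{p+q}(Y_\Nis,F_{(Y,E_{\red})})\]
reduces the theorem to two assertions: that the natural map $F_{(X,D_{\red})}\to\rho_*F_{(Y,E_{\red})}$ is an isomorphism of Nisnevich sheaves on $X$, and that $R^q\rho_*F_{(Y,E_{\red})}=0$ for $q>0$. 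The case $\codim_X Z=1$ is trivial ($Z$ is then a Cartier divisor and $\rho$ an isomorphism), so assume $r=\codim_X Z\geq2$; then $\rho$ is an isomorphism over $X-Z\supset X-D_{\red}$, and the interiors agree, $U:=X-D_{\red}=Y-E_{\red}$. The first assertion is now quick. By semipurity (\S\ref{preliminaries}\eqref{semipure}) both $F_{(X,D_{\red})}$ and $\rho_*F_{(Y,E_{\red})}$ are subsheaves of the pushforward to $X$ of the restriction of $F$ to $U$; the inclusion $\rho_*F_{(Y,E_{\red})}\subseteq F_{(X,D_{\red})}$ holds because the $E_{\red}$-modulus conditions subsume the $D_{\red}$-conditions (the strict transform of $D_{\red}$ has the same generic neighbourhoods as $D_{\red}$, since $r\geq2$), while the reverse inclusion is precisely the well-definedness of $\rho^*$ supplied by the logarithmic property, Theorem \ref{thm:CItsp-log}.

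It remains to prove $R^q\rho_*F_{(Y,E_{\red})}=0$ for $q>0$, which is Nisnevich-local on $X$. Henselizing at a point $x$, there is nothing to prove unless $x\in Z$; so take $x\in Z$ and, using that $Z$ is normal crossing to $D$, choose regular parameters $z_1,\dots,z_d$ with $Z=\{z_1=\dots=z_r=0\}$ and $|D|=\{\prod_{j\in J}z_j=0\}$ for some nonempty $J\subset\{1,\dots,r\}$. Applying the endofunctor $F\mapsto F^{\sY}$, $F^{\sY}(\sZ)=F(\sY\otimes\sZ)$, which preserves $\CItspNis$ (as in the proof of Lemma \ref{lem:contraction}) with $\sY=(S,\emptyset)$ and $S=\Spec K\{z_{r+1},\dots,z_d\}$, I would strip off the parameter directions and reduce to the model in which $X=\Spec K\{z_1,\dots,z_r\}$ is henselian local, $Z$ is its closed point, $D=\{\prod_{j\in J}z_j=0\}$, and $\rho$ is the blow-up of the closed point. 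As $X$ is henselian local the target side is acyclic in positive degrees, so the content becomes $H^q(Y_\Nis,F_{(Y,E_{\red})})=0$ for $q>0$, with $Y=\mathrm{Bl}_x X$ and exceptional divisor $E_0\cong\P^{r-1}$. I expect to induct on $r$, the case $r=1$ being trivial.

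For this I would compute $H^*(Y_\Nis,F_{(Y,E_{\red})})$ through the Cousin resolution furnished by purity. By Corollary \ref{cor;purityM} the sheaf $F_{(Y,E_{\red})}$ admits a Gersten-type resolution whose term in codimension $n$ is $\bigoplus_{y\in Y^{(n)}}H^n_y(Y_\Nis,F_{(Y,E_{\red})})$, each summand being identified with $\tauu{\ul{e}}F(y)$, where $\ul{e}$ records the reduced components of $E_{\red}$ through $y$. Over $X-Z$ the map $\rho$ is an isomorphism, so on points lying off the exceptional locus the Cousin complex of $(Y,E_{\red})$ agrees with that of $(X,D_{\red})$, which is acyclic in positive degrees; the entire discrepancy is therefore concentrated along $E_0\cong\P^{r-1}$, which belongs to $E_{\red}$ since $J\neq\emptyset$. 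Here I would trivialize the contribution of the projective fibre by the $\bcube$-invariance of $F$, applied ruling by ruling along $\P^{r-1}$ (equivalently, via the induction on $r$ that removes one exceptional $\P^1$ at a time), and this is what forces $H^q(Y_\Nis,F_{(Y,E_{\red})})=0$ for $q>0$.

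The main obstacle is exactly this last step. The degree-$0$ comparison is formal once the logarithmic property is known, but the higher vanishing genuinely requires controlling the cohomology of $F_{(Y,E_{\red})}$ transverse to the exceptional $\P^{r-1}$, and the only available input trivializing a projective fibre carrying its reduced log boundary is $\bcube$-invariance. Making the passage from $\P^{r-1}$ to an iterated $\P^1$-computation compatible with the purity filtration by the $\tauu{\ul{e}}$-terms—and tracking, chart by chart, how these terms transform under the blow-up, which is where the normal-crossing hypothesis on $Z$ and the constraint $J\subset\{1,\dots,r\}$ are indispensable—is the technical heart of the argument.
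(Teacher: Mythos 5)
Your degree-zero comparison and the first reduction are sound and match the paper: the paper also disposes of $i=0$ by noting that $\rho$ is invertible in $\uMCor$ (since $E=\rho^*D$) and that $\rho^*$ factors through $F(Y,E_{\red})$ by Theorem \ref{thm:CItsp-log}, and it likewise localizes the higher vanishing $R^q\rho_*F_{(Y,E_{\red})}=0$ to a standard model, namely (by \cite[Lem. 9]{kes}) the blow-up of $(\A^c,L_1+\cdots+L_r)\otimes\sW$ at $0\times W$. Two caveats already at this stage: your local model takes the complementary factor with empty modulus, $\sY=(S,\emptyset)$, whereas the paper must carry a general $\sW=(W,W^\infty)\in\ulMCorls$ through the whole argument --- the inductive step (Lemma \ref{lem:blow-upInd}) inevitably pushes transversal boundary components and the old modulus into the $\sW$-slot, so the statement with trivial boundary is not stable under the induction; and your remark that ``the case $r=1$ is trivial'' is only true for your henselian-point model --- in the paper's model the case $r=1$ (blow-up of $\A^n$ at the origin with modulus $L_1$) is a genuine external input, \cite[Th. 2.12]{brs}.

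The decisive problem is the step you yourself flag as the obstacle: you have no actual argument for $H^q(Y_\Nis,F_{(Y,E_{\red})})=0$, $q>0$. The Cousin resolution from Corollary \ref{cor;purityM} does compute this cohomology, but exactness of the resulting global complex in positive degrees is essentially a restatement of the vanishing to be proved, and ``$\bcube$-invariance applied ruling by ruling along $\P^{r-1}$'' does not exist as a usable statement: a line in the exceptional $\P^{r-1}$ meets the traces of the strict transforms $\tL_j$ in several points, so it is not a copy of $\bcube=(\P^1,\infty)$ and $\bcube$-invariance does not contract it. The paper's Proposition \ref{prop:blowup-logAn} circumvents exactly this by never working with $\P^{r-1}$ directly: the base case $n=2$ uses the projection $\pi_0:Y\to\P^1$ onto the exceptional line, kills $R^j\pi_*$ by \cite[Lem. 2.11]{brs}, and then kills $H^i(\P^1_W,\pi_*\sF)$ by identifying $\pi_*\sF$ with $F_{1,(\P^1,\infty)\otimes\sW}$ for $F_1=\uHom(\Ztr(\A^1,0),F)$ and invoking strict $\bcube$-invariance \cite[Th. 0.6]{shuji}; and the inductive step for $n>2$ rests on the commutation of composite blow-ups, $(\hY',\hL'_\bullet,\Xi',\Xi'')\cong(Y',\tL'_\bullet,E',E'')$ from \cite[Lem. 2.15]{brs}, which trades the point blow-up of $\A^n$ against blow-ups in the codimension-$2$ center $Z=L_1\cap L_2$ and in $\hZ=\sigma^{-1}(0)$, both handled by Lemma \ref{lem:blow-upInd} and the induction hypothesis. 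This commuting-blow-ups device, together with the $n=2$ fibration argument and the $r=1$ input from \cite[Th. 2.12]{brs}, is the content of the proof, and none of it is present in your sketch; as written, the proposal reduces the theorem correctly but leaves its core unproved.
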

\begin{proof}
Writing $\sY=(Y,E)$ and $\sX=(X,D)$, $\rho$ is induced by a blowup
$\rho:Y\to X$ in a smooth center $Z\subset D$ normal crossing to $D$.
First we prove the theorem in case $i=0$.
We may assume that $D$ is reduced and $E=\rho^*D$.
Then $\rho$ is invertible in $\uMCor$ so that $\rho^*:F(\sX)\cong F(\sY)$.
Since this factors through $F(Y,E_{\red})$, we get \eqref{eq1;thm:CItsp-loginv} for $i=0$. 

To show \eqref{eq1;thm:CItsp-loginv} for $i>0$, it now suffices to prove
$R^i\rho_* F_{(Y,E_{\red})} =0$.
By \cite[Lem. 9]{kes}, Nisnevich locally around a point of $Z$,
$(X,D)$ is isomorphic to 
\[ (\A^c,L_1+\cdots +L_r)\otimes \sW \qwith 
\sW=(W,W^\infty)\in \uMCorls,\]
where $\A^c=\Spec k[t_1,\dots,t_c]$ with $c=\codim_z(Z,X)$ and $L_i=V(t_i)$ for $i=1,\dots,r$ with $1\leq r\leq c$, and $Z$ corresponds to $0\times W$. Hence the theorem follows from the following proposition.
\end{proof}

\begin{prop}\label{prop:blowup-logAn}
Let $F\in\CItspNis$ and $\sW=(W,W^\infty)\in \ulMCorls$.
Let $\A^n=\Spec k[t_1,\dots,t_n]$ and put $L_i=V(t_i)$ for $1\leq i\leq n$.
Let $\rho: Y\to \A^n$ be the blow-up at the origin $0\in \A^n$ and
$\tL_i\subset Y$ be the strict transforms of $L_i$ for $i=1,\dots, r$ and $E=\rho^{-1}(0)\subset Y$.  
For any $1\leq r\leq n$, we have
\begin{equation}\label{eq1;blowup-logAn}
R^i\rho_{W*} F_{(Y, \tL_1 +\cdots +\tL_r  + E)\otimes \sW}=0\qfor i\ge 1,
\end{equation}
where  $\rho_W:=\rho\times\id_W: Y\times W\to \A^2\times W$.
\end{prop}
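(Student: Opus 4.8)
The plan is to compute the stalks of $R^i\rho_{W*}$ over the locus contracted by $\rho_W$ and to reduce the vanishing, via the purity theorem of \S\ref{puritymodulus}, to a residue computation on the exceptional fibre. First I would note that $\rho_W$ is proper and restricts to an isomorphism over $(\A^n\setminus\{0\})\times W$, so that, writing $D=\tL_1+\cdots+\tL_r+E$ and $\mathcal G:=F_{(Y,D)\otimes\sW}$, the sheaf $R^i\rho_{W*}\mathcal G$ is supported on $\{0\}\times W$ for every $i\geq 1$; it therefore suffices to show that its stalk at each point over $\{0\}\times W$ vanishes. Absorbing $\sW$ into the coefficient by replacing $F$ with $F^{\sW}\in\CItspNis$ (\cite[Lem. 1.5(3)]{brs}), as in Lemma \ref{lem:contraction}, and localising on $W$, I may carry out the computation over the function field $K$ of a point of $W$, so that the stalk becomes the Nisnevich cohomology $H^i$ of $\mathcal G$ on the local model $Y\times_{\A^n}\Spec\sO^h_{\A^n_K,0}$.

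I would then prove the vanishing for all $r\geq 0$ (the stated range $r\geq 1$ being a special case) by a double induction on $(n,r)$, ordered lexicographically, with $n=1$ trivial since the blow-up is then an isomorphism, and $r=0$ the base case in each dimension. For the inductive step with $r\geq 1$, I peel off the strict transform $\tL_r$ by means of the short exact sequence of \cite[Lem. 7.1]{shuji} (already exploited in Step 1 of Theorem \ref{thm;purityM}),
\[ 0\to F_{(Y,D-\tL_r)\otimes\sW}\to \mathcal G\to \iota_{r*}\,(\Fc)_{(\tL_r,\,(D-\tL_r)|_{\tL_r})\otimes\sW}\to 0, \]
where $\iota_r\colon\tL_r\hookrightarrow Y$ and the contraction $\Fc$ again lies in $\CItspNis$ by Lemma \ref{lem:F-1splitting}. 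Applying $R\rho_{W*}$ and passing to the long exact sequence reduces the claim to the sub, which is the case $(n,r-1)$, and to the quotient, which — since $\tL_r\cong\mathrm{Bl}_0\A^{n-1}$ carries the $r-1$ remaining strict transforms together with its own exceptional divisor, and $\rho$ maps it to $L_r\cong\A^{n-1}$ by the blow-up of the origin — is the case $(n-1,r-1)$ with coefficient $\Fc$; both hold by induction.

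The base case $r=0$, namely $R^{i}\rho_{W*}F_{(Y,E)\otimes\sW}=0$ for $i\geq 1$, is the geometric heart, and here I would use purity. By Corollary \ref{cor;purityM} the coniveau spectral sequence of $F_{(Y,E)}$ on the local model degenerates, so its cohomology is computed by the Cousin complex, whose terms are the twists $\tauu{e}F$ ($e\in\{0,1\}$) of $F$ along the strata and whose differentials are the boundary maps furnished by the higher local symbols of \S\ref{HLS}. Since $\rho$ is an isomorphism off the exceptional fibre $E_0=\rho^{-1}(0)\cong\P^{n-1}_K$, matching this against the Cousin complex for $F_{(\A^n,\emptyset)\otimes\sW}$ over $\Spec\sO^h_{\A^n_K,0}$ shows that the higher cohomology is governed by the subquotient complex whose terms are supported on $E_0$. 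Exploiting the line-bundle structure $\pi\colon Y\to\P^{n-1}$, for which $E$ is the zero section so that $(Y,E)$ is fibrewise $(\A^1,0)$, this $E_0$-complex is the residue complex of a contraction sheaf on the projective space $\P^{n-1}_K$; its acyclicity in positive degrees follows from the reciprocity law \ref{HS4} — valid precisely because $E_0$ is projective over $K$ — together with \ref{HS2} and \ref{HS3}, the degree-zero part reproducing the value of the pushforward over $\A^n$.

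The hard part will be the residue computation underlying this base case on $E_0\cong\P^{n-1}_K$: one must identify the Cousin differentials of the $E_0$-supported complex with the local-symbol boundary maps to which \ref{HS4} applies, and verify their compatibility with the twists $\tauu{e}F$ under the restriction maps of \ref{HS2} and with the modulus vanishing of \ref{HS3} along $E$. The line-bundle structure $\pi$ is what makes the contraction $\Fc$ appear fibrewise and lets the $\bcube$-invariance of $F$ close the computation; the delicate points are the matching of signs and indices across the standard affine charts of $Y$ and the control of the terms arising where $E_0$ meets the images of the strict transforms.
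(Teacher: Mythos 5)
Your inductive skeleton (peel off $\tL_r$, reduce to $(n,r-1)$ and $(n-1,r-1)$) is not the paper's argument, and as written it has two genuine gaps. First, the short exact sequence you use to peel off $\tL_r$ is not available globally on $Y$. The sequence of \cite[Lem. 7.1]{shuji}, as used in Step 1 of Theorem \ref{thm;purityM}, is established on a henselian local scheme with a \emph{chosen} coordinate $t_1$ cutting out the divisor; the identification of the quotient $F_{(Y,D)\otimes\sW}/F_{(Y,D-\tL_r)\otimes\sW}$ with the \emph{untwisted} sheaf $\iota_{r*}(\Fc)_{(\tL_r,(D-\tL_r)|_{\tL_r})\otimes\sW}$ depends on a local equation of $\tL_r$ (compare the coordinate-dependence of $\theta_\epsilon$ in Corollary \ref{cor;purityM}). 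On the blow-up this dependence is an obstruction, not a formality: since $\rho^*L_r=\tL_r+E$, one has $\sO_Y(\tL_r)|_E\cong\sO_{\P^{n-1}}(1)$, so no Nisnevich neighbourhood of the exceptional fibre trivializes the normal bundle of $\tL_r$, and over the henselization of $\A^n\times W$ at a point of $\{0\}\times W$ the quotient is only a \emph{twisted} form of $(\Fc)_{(\tL_r,\dots)}$. Your appeal to the inductive case $(n-1,r-1)$ with coefficient $\Fc$ therefore does not apply to this quotient without a further argument that the vanishing of $R^i\rho_{W*}$ is insensitive to such twists, which is nowhere addressed.

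Second, your whole induction funnels into the base case $r=0$, i.e. $R^i\rho_{W*}F_{(Y,E)\otimes\sW}=0$, a statement the paper neither asserts nor needs (its stated range is $1\le r\le n$, with $r=1$ quoted from \cite[Th. 2.12]{brs}), and your proof of it is exactly the part you flag as "the hard part" and leave as a sketch. The mechanism proposed does not close: the reciprocity law \ref{HS4} (with \ref{HS2}, \ref{HS3}) yields vanishing of \emph{sums} of local symbols, i.e. relations saying certain composites of boundary maps vanish; it does not yield \emph{acyclicity in positive degrees} of the $E_0$-supported Cousin-type complex on $\P^{n-1}_K$. Indeed, for $\A^1$-invariant $F$ the analogous untwisted complex computes $H^i(\P^{n-1},-)$, which is nonzero in positive degrees in general (e.g. Chow groups of $\P^{n-1}$), so any acyclicity must exploit the modulus along $E$, i.e. the $\sO(1)$-twist, in an essential way; the only input of this kind in the available toolkit is \cite[Th. 0.6]{shuji}, which concerns $(\P^1,\infty)$ only. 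Making the fibrewise-$(\A^1,0)$ picture rigorous would likewise require a twisted analogue of \cite[Lem. 2.11]{brs}, since $\pi:Y\to\P^{n-1}$ is a nontrivial line bundle. The paper circumvents all of this: it inducts on $n$ with base $n=2$, where Lemma \ref{lem:blowup-logA2} handles $\P^1$ explicitly via the map $a_0:Y\to\A^1_x\times\P^1$ and \cite[Th. 0.6]{shuji}, and for $r\ge 2$ it compares two blow-up towers (blowing up $0$ then the strict transform of $L_1\cap L_2$, versus blowing up $L_1\cap L_2$ then the preimage of $0$) via \cite[Lem. 2.15]{brs} and Lemma \ref{lem:blow-upInd}, reducing everything to codimension-two centers and to smaller $n$, so that no residue sequence on the global blow-up and no cohomology computation on $\P^{n-1}$ for $n>2$ is ever needed.
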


\begin{lemma}\label{lem:blowup-logA2}
Proposition \ref{prop:blowup-logAn} holds for $n=2$.
\end{lemma}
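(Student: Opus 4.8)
The plan is to prove the vanishing of $R^1\rho_{W*}\sF$, where $\sF=F_{(Y,\tL_1+\cdots+\tL_r+E)\otimes\sW}$, by an explicit Mayer--Vietoris computation over the exceptional curve. Since $R^i\rho_{W*}\sF$ is supported on $0\times W$ and the assertion is Nisnevich-local, I would compute its stalk at a point $\xi$ over $0\times W$ by henselising $\A^2\times W$ at $\xi$ and passing to the induced blow-up $\tilde S=\rho_W^{-1}(S)$ of the henselisation $S$. The fixed modulus $Y\times W^\infty$ coming from $\sW$ is transverse to the blow-up directions and will simply be carried along, while the higher local symbols of \S\ref{HLS} will be applied after base-change to function fields $K/k$, exactly as in the proof of Theorem \ref{thm:CItsp-log}.

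\textbf{The two-chart cover.} I would cover $\tilde S$ by the two standard charts of the blow-up, $\tilde U_1$ with coordinates $(t_1,s)$ (where $t_2=t_1s$) and $\tilde U_2$ with coordinates $(u,t_2)$ (where $t_1=ut_2$), with overlap $\tilde U_{12}=\tilde U_1[s^{-1}]$. On these the divisor $\tL_1+\cdots+\tL_r+E$ reads $\{t_1=0\}$ on $\tilde U_1$ when $r=1$ and $\{t_1s=0\}$ when $r=2$, reads $\{ut_2=0\}$ on $\tilde U_2$, and in both cases restricts to just $E=\{t_1=0\}$ on $\tilde U_{12}$. The associated Mayer--Vietoris sequence, combined with the vanishing of the higher Nisnevich cohomology of $\sF$ on $\tilde U_1,\tilde U_2,\tilde U_{12}$ — each being an affine space over the henselian base $S$, on which the purity of Corollary \ref{cor;purityM} concentrates the cohomology of the modulus sheaf in degree $0$ — shows at once that $H^i(\tilde S,\sF)=0$ for $i\ge 2$ and that
\[ H^1(\tilde S,\sF)=\Coker\big(\sF(\tilde U_1)\oplus\sF(\tilde U_2)\xrightarrow{\ \mathrm{res}\ }\sF(\tilde U_{12})\big). \]
Thus the entire lemma reduces to the surjectivity of this restriction map.

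\textbf{The crux.} The heart of the matter, and the step I expect to be the main obstacle, is this surjectivity, which amounts to an additive ``partial fractions'' decomposition along the exceptional fibre $E\cong\P^1_s$: a section over $\tilde U_{12}$, living on $(\A^1_{t_1},0)\otimes(\G_{m,s},\emptyset)\otimes\sW$ over $S$, must be split into a summand extending across $s=0$ (a section on $\tilde U_1$, with modulus along $\{s=0\}$ allowed only when $r=2$) and one extending across $s=\infty$, i.e.\ $u=0$ (a section on $\tilde U_2$, with modulus along $\tL_1=\{u=0\}$). I would construct this decomposition from the $\bcube$-invariance of $F$ in the $s$-direction together with semipurity, using the contraction-and-splitting technique underlying Lemma \ref{lem:contraction} and \cite[Lem.\ 7.1 and 5.9]{shuji} (fixing the $t_1$- and $\sW$-directions into an auxiliary coefficient $F^{\sW}\in\CItspNis$ as there); to check that the two candidate summands carry the correct modulus I would test membership in the modulus subgroups via the higher local symbol criterion of Proposition \ref{lem2;diagonal} and the vanishing axioms \ref{HS3}--\ref{HS4}, as in the proof of Theorem \ref{thm:CItsp-log}. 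The asymmetric case $r=1$, in which the $s=0$ end carries no modulus so that the corresponding summand must be genuinely regular there, is the delicate one; controlling that regularity is precisely where purity and semipurity are indispensable. Once surjectivity is secured for both $r=1$ and $r=2$, we obtain $H^1(\tilde S,\sF)=0$ at every point over $0\times W$, which is \eqref{eq1;blowup-logAn} for $n=2$.
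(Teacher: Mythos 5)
There is a genuine gap, at two distinct places. First, your Mayer--Vietoris reduction silently assumes that $H^i(\tilde U_1,\sF)$, $H^i(\tilde U_2,\sF)$ and $H^i(\tilde U_{12},\sF)$ vanish for $i\ge 1$, justified by the claim that each chart is ``an affine space over the henselian base $S$''. That claim is false: henselisation does not pass to the blow-up charts. For $S=\Spec k\{t_1,t_2\}$ the first chart is $\Spec\bigl(k\{t_1,t_2\}[s]/(t_2-t_1s)\bigr)$, which contains elements such as $\sqrt{1+t_1s}$ (char $\neq 2$) and is therefore strictly larger than $k\{t_1\}[s]$; it is the base change of a blow-up chart of $\A^2$ along $S\to \A^2_W$, not a product of $\A^1$ with a henselian local scheme. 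Consequently vanishing results of the shape of \cite[Lem.~2.11]{brs} do not apply to the charts, and Corollary \ref{cor;purityM} cannot substitute for them: purity only concentrates the local cohomology with support at points, i.e.\ it produces the Cousin complex via the coniveau spectral sequence, and the exactness of that complex in positive degrees on a scheme that is not henselian local is precisely the cohomology vanishing you are trying to prove, not an available input. (On the overlap $\tilde U_{12}$ the fibre direction is moreover a $\G_m$, for which no $\bcube$-contractibility is available at all.)

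Second, even granting the reduction, the actual content of the lemma --- your ``partial fractions'' surjectivity of $\sF(\tilde U_1)\oplus\sF(\tilde U_2)\to\sF(\tilde U_{12})$ --- is left as a programme (``I would construct this decomposition from\ldots''), and you yourself flag the $r=1$ case as unresolved; since this surjectivity is essentially equivalent to $R^1\rho_{W*}\sF=0$, the proposal does not prove the statement. For comparison, the paper avoids any such decomposition: it keeps $W$ henselian local but works globally over $\A^2_W$, using the $i=0$ case of Theorem \ref{thm:CItsp-loginv}, \cite[Lem.~2.11]{brs} and the Leray spectral sequence to reduce \eqref{eq1;blowup-logAn} to showing $H^i(Y\times W,\sF)=0$ for $i\ge 1$; it then fibres $Y\times W$ over $\P^1_W$ by the second projection $\pi$, whose fibre directions are $(\A^1,0)$-pairs, so $R^j\pi_*\sF=0$ for $j\ge 1$ again by \cite[Lem.~2.11]{brs}; finally it compares $\pi_*\sF$ with $F_{1,(\P^1,\infty)\otimes\sW}$, where $F_1=\uHom(\Ztr(\A^1_x,0),F)\in\CItspNis$, via the correspondence $\alpha$ induced by $Y\hookrightarrow\A^2\times\P^1\to\A^1_x\times\P^1$; this map of sheaves is an isomorphism away from $\{0\}\times W$, which is henselian local so that the discrepancy carries no higher cohomology, and one concludes by \cite[Th.~0.6]{shuji}. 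The case $r=1$ is simply quoted from \cite[Lem.~2.13]{brs}, and no higher local symbols are needed anywhere in this lemma; in the paper they enter only in the proof of Theorem \ref{thm:CItsp-log}.
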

\begin{proof}
We can assume $W$ is henselian local.
The case $r=1$ is proved in \cite[Lem. 2.13]{brs} and we show the case $r=2$.\footnote{The following argument is adopted from \cite[Lem. 2.13]{brs}, but the present case is easier.} 
Put $D= L_1+ L_2$.
By the case $i=0$ of Theorem \ref{thm:CItsp-loginv}, we get
\begin{equation}\label{eq1;blowup-logA2}
F_{(\A^2,D)\otimes \sW} \cong \rho_{W*} F_{(Y, \tL_1+\tL_2 + E)\otimes \sW}.
\end{equation}
Set
\[\sF:=F_{(Y, \tL_1+\tL_2 + E)\otimes \sW}.\]
Since $R^i\rho_{W*}\sF$ for $i\ge 1$ is supported in $0\times W$ we have 
\[R^i\rho_{W*}\sF=0\Longleftrightarrow H^0(\A^2_W, R^i\rho_{W*}\sF)=0,\]
where $\A^2_W=\A^2\times W$, and 
\[H^j(\A^2_W, R^i\rho_{W*}\sF)=0, \quad \text{for all }i,j\ge 1.\]
By \eqref{eq1;blowup-logA2} and \cite[Lem. 2.11]{brs}
\[H^i(\A^2_W, \rho_{W*}\sF)=H^i(\A^2_W, F_{(\A^2, D)\otimes \sW})=0.\]
Thus the Leray spectral sequence yields
\[H^0(\A_W^2, R^i\rho_{W*}\sF)= H^i(Y\times W, \sF), \quad i\ge 0,\]
and we have to show, that this group vanishes for $i\ge 1$.
We can write 
\[\A^2=\Spec k[x,y]\;\text{ and }\;  L_1=V(x),\; L_2=V(y) \subset \A^2.\]
Then we have
\[Y=\Proj k[x,y][S,T]/(xT-yS)\subset \A^2\times \P^1.\]
Denote by 
\[\pi_0: Y\inj \A^2\times \P^1 \to \P^1=\Proj k[S,T]\]
the morphism induced by projection and let
$\pi: Y\times W \to \P^1_W$ be its base change. Then $\pi_0$ induces
an isomorphism $E\simeq \P^1$, and we have
\begin{equation}\label{eq;sUV0} \tL_1=\pi_0^{-1}(0),\;\; \tL_2=\pi_0^{-1}(\infty).\end{equation}
Set $s=S/T=x/y$  and write
\[ \P^1\setminus \{\infty\}= \A^1_s:= \Spec k[s],\quad \P^1\setminus\{0\}= \Spec k[\tfrac{1}{s}].\]
Set $U:=\A^1_s \times W$ and $V:=(\P^1\setminus\{0\})\times W$ and 
\[\sU:=(\A^1_s, 0)\otimes\sW, \quad \sV:=(\P^1\setminus\{0\},\infty) \otimes \sW.\]
We have 
\begin{equation*}\label{eq;sUV}
\pi^{-1}(U)= \A^1_y\times U, 
 \quad \pi^{-1}(V)=\A^1_x\times  V,
\end{equation*}
and the restriction of $\pi$ to these open subsets is given by projection.
Furthermore, $E\times W\subset Y$ is defined by $y=0$ on $\pi^{-1}(U)$ and 
by $x=0$ on $\pi^{-1}(V)$. In view of \eqref{eq;sUV0}, we have 
\begin{equation}\label{eq;sFUV}
\sF_{|\pi^{-1}(U)}=F_{(\A^1_y, 0)\otimes \sU},\quad 
\sF_{|\pi^{-1}(V)}= F_{(\A^1_x, 0)\otimes \sV}.
\end{equation}
Thus \cite[Lem. 2.11]{brs} yields
\[R^j\pi_*\sF=0 \qfor j\ge 1,\]
and it remains to show
\eq{lem:blow-upA21}{H^i(\P^1_W, \pi_*\sF)=0 \qfor i\ge 1.}
where $\P^1_W=\P^1\times W$. For this consider the map 
\[a_0: Y \to \A^1_x \times \P^1\]
which is the closed immersion $Y\inj \A^2\times \P^1$ followed by the projection $\A^2\to \A^1_x$. Let $a: Y\times W\to \A^1_x\times \P^1\times W$ be its base change. In view of \eqref{eq;sFUV}, the map $a$ induces a morphism in $\uMCor$:
\[\alpha: (Y,\tL_1+\tL_2+E)\otimes \sW\to (\A^1_x, 0)\otimes (\P^1,\infty)\otimes \sW,\]
which is an isomorphism over
$(\A^1_x, 0)\otimes (\P^1\backslash \{0\},\infty)\otimes \sW$. Setting
\[F_1:=\uHom(\Ztr(\A^1_x, 0), F )\in \CItspNis,\]
it induces a map of Nisnevich sheaves on $\P^1_W$:
\[ \pi_*(\alpha^*): F_{1, (\P^1,\infty)\otimes \sW}\to \pi_*\sF,\]
which becomes an isomorphism over $(\P^1-\{0\})\times W$.
Hence \eqref{lem:blow-upA21} follows from
\[H^i(\P^1_W, F_{1, (\P^1,\infty)\otimes \sW})=0 \qfor i\ge 1,\]
 which follows from \cite[Th. 0.6]{shuji}.
\end{proof}

\begin{lemma}\label{lem:blow-upInd}
Let $N>2$ be an integer and assume that Proposition \ref{prop:blowup-logAn} holds for $n<N$. Let $(X,D)\in \uMCorls$ and $Z\subset X$ be a smooth integral closed subscheme with $2\le \codim(Z,Y)=:c <N$. Assume
\[D= D_1+\cdots + D_r + D' \qwith r\leq c ,\]
where $D_1,\dots,D_r$ are distinct and reduced irreducible components of $D$ containing $Z$ and $D'$ is an effective divisor on $X$ such that none of the component of $D'$  contains $Z$ and $Z$ is transversal to $|D'|$.
Let $\rho: Y\to X$ be the blow-up of $X$ in $Z$ and $\tD_i,\tD'\subset Y$
be the strict transforms of $D_i$ and $D'$ respectively and $E_Z=\rho^{-1}(Z)$. 
Then, for all $\sW=(W,W^\infty)\in \uMCorls$, 
\[R^i\rho_{W*} F_{(Y, \tD_1+\cdots + \tD_r +E_Z + \tD' )\otimes \sW}=0 \qfor i\ge 1,\]
where $\rho_W: Y\times W \to X\times W$ denotes the base change of $\rho$.
\end{lemma}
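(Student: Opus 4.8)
The plan is to reduce the statement to Proposition \ref{prop:blowup-logAn} for $n=c$, which is available by the inductive hypothesis since $c<N$. Write $\sF=F_{(Y, \tD_1+\cdots+\tD_r+E_Z+\tD')\otimes \sW}$. Since $\rho_W=\rho\times\id_W$ is proper and restricts to an isomorphism over $(X\setminus Z)\times W$, the sheaves $R^i\rho_{W*}\sF$ are supported on $Z\times W$ for $i\ge 1$. Hence the asserted vanishing may be checked Nisnevich-locally on $X\times W$ around an arbitrary point of $Z\times W$, and it suffices to prove it after passing to a Nisnevich neighborhood of a point $z\in Z$.

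First I would put $(X,D)$ into a product form adapted to $Z$. As $|D|$ is a normal crossing divisor, $D_1,\dots,D_r$ are the components through $Z$, and $D'$ is transversal to $Z$, one can choose Nisnevich-locally (as in the reduction used in the proof of Theorem \ref{thm:CItsp-loginv} via \cite[Lem. 9]{kes}) a regular system of parameters $z_1,\dots,z_d$ with $Z=\{z_1=\cdots=z_c=0\}$, $D_i=\{z_i=0\}$ for $1\le i\le r$, and the components of $D'$ among the $\{z_j=0\}$ with $j>c$. This produces an isomorphism of modulus pairs
\[(X,D)\cong (\A^c,L_1+\cdots+L_r)\otimes\sW',\]
with $\A^c=\Spec k[z_1,\dots,z_c]$, $L_i=V(z_i)$, $Z=0\times W'$, where $\sW'\in\ulMCorls$ has interior $W'$ a Nisnevich neighborhood of the origin in $\Spec k[z_{c+1},\dots,z_d]$ and boundary absorbing the horizontal (by transversality to $Z$) divisor $D'_{W'}$ into which $D'$ is carried.

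Under this identification the blow-up of $Z$ is the product blow-up $\Bl_{0\times W'}(\A^c\times W')=(\Bl_0\A^c)\times W'$, so that $\rho=\rho_{\A^c}\times\id_{W'}$ with $\rho_{\A^c}:\Bl_0\A^c\to\A^c$, and the strict transforms match up as $\tD_i\leftrightarrow\tL_i\times W'$, $E_Z\leftrightarrow E\times W'$, and $\tD'\leftrightarrow(\Bl_0\A^c)\times D'_{W'}$. Combining the two auxiliary modulus pairs into $\sW'\otimes\sW\in\ulMCorls$, I obtain
\[\sF\cong F_{(\Bl_0\A^c,\tL_1+\cdots+\tL_r+E)\otimes(\sW'\otimes\sW)},\qquad \rho_W\cong\rho_{\A^c}\times\id_{W'\times W}.\]
Since $1\le r\le c<N$ (here $r\ge 1$ because $Z\subset|D|$), Proposition \ref{prop:blowup-logAn} for $n=c$, applied with the auxiliary pair $\sW'\otimes\sW$, yields $R^i\rho_{W*}\sF=0$ for $i\ge 1$, which is the claim.

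The main obstacle is the second step: one must check that the transversality of $D'$ to $Z$, together with the normal crossing hypothesis on $|D|$, genuinely permits a simultaneous coordinatization in which $Z$ is a coordinate subspace, the $D_i$ are the corresponding coordinate hyperplanes, and $D'$ is horizontal (supported in the $W'$-directions). This is exactly what makes the blow-up split as a product and lets $D'$ be absorbed into the modulus factor; once this product description is established, the passage to the inductive hypothesis is formal.
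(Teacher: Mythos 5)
Your proposal is correct and follows essentially the same route as the paper: the paper likewise localizes Nisnevich-locally (in fact in the henselization $\sO^h_{X\times W,z}$ at a point $z\in Z\times W$, using perfectness of $k$ to split off the residue field), chooses local parameters so that $Z$, the $D_i$, $D'$ and $W^\infty$ become coordinate subschemes, and thereby identifies $\rho_W$ with the product blow-up $(\widetilde{\A^c},\tL_1+\cdots+\tL_r+E)\otimes\sW'\to(\A^c,L_1+\cdots+L_r)\otimes\sW'$, concluding by Proposition \ref{prop:blowup-logAn} for $n=c<N$. The only cosmetic difference is bookkeeping: the paper absorbs both $D'$ and the pullback of $W^\infty$ into a single auxiliary modulus pair $\sW'$ over the henselization, where you keep $\sW'\otimes\sW$ as a tensor of two pairs, and the coordinatization step you flag as the main obstacle is exactly what the paper's choice of local parameters (guaranteed by the normal crossing and transversality hypotheses) accomplishes.
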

\begin{proof}\footnote{The proof is adopted from \cite[Lem. 2.14]{brs}.}
The question is Nisnevich local around the points in $Z\times W$.
Let $z\in Z\times W$ be a point and set $A:=\sO_{X\times W,z}^h$.
For $V\subset Y\times W$ we denote by $V_{(z)}:= V\times_{X\times W} \Spec A$.
By assumption we find a system of local parameters $t_1,\ldots, t_m$ of $A$,
such that 
\[(D_i\times W)_{(z)}=V(t_i) \qfor 1\leq i\leq r,\;
(Z\times W)_{(z)}=V(t_1,\ldots, t_c),\]
\[ (D'\times W)_{(z)}=V(t_{c+1}^{e_{c+1}}\cdots t_{m_0}^{e_{m_0}}) \qwith c+1\leq m_0\le m, \]
\[(X\times W^\infty)_{(z)}=V(t_{m_0+1}^{e_{m_0+1}}\cdots t_{m_1}^{e_{m_1}})
\qwith m_0\le m_1\leq  m.\]
Letting $K$ be the residue field of $A$, we can choose a ring homomorphism $K\hookrightarrow A$ which is a section of $A \to K$. Then we obtain an isomorphism
\[K\{t_1,\ldots, t_m\}\xr{\simeq} A.\]
Let $\rho_1:\widetilde{\A^c}\to \A^c$ be the blow-up in $0$.
By the above 
\[\rho_W: (Y,\tD_1+\cdots + \tD_r +E_Z + \tD' )\otimes \sW\to (X, D)\otimes \sW\]
is Nisnevich locally around $z$
isomorphic over $k$ to the morphism
\[(\widetilde{\A^c},\tL_1+\cdots+\tL_r + E)\otimes \sW'
\to (\A^c, L_1+\cdots+ L_r)\otimes \sW',\]
\[ (\sW'=(\A_K^{m-c}, (\prod_{i=c+1}^{m_1} t_{i}^{e_{i}}))) \]
induced by a map $(\widetilde{\A^c},\tL_1+\cdots+\tL_r + E)\to (\A^c, L_1+\cdots+ L_r)$
as in  Proposition \ref{prop:blowup-logAn}.
Hence the statement follows from the proposition for $n=c<N$.
\end{proof}

\begin{proof}[Proof of Proposition {\ref{prop:blowup-logAn}}.]
The proof is by induction on $n\geq 2$. The case $n=2$ follows from Lemma \ref{lem:blowup-logA2}. 
Assume $n>2$ and the theorem is proven for $\A^m$ with $m<n$.
In case $r=1$, Proposition {\ref{prop:blowup-logAn}} is proved in \cite[Th. 2.12]{brs}.
Assume $r\geq 2$. 
Let $Z:=L_1\cap L_2\subset \A^n$ and $\tZ\subset Y$ be the strict transform of $Z$. Denote by $\rho': Y'\to Y$ the blow-up of $Y$ in $\tZ$ and $\tL_i', E'\subset Y'$ be the strict transforms of $\tL, E$ respectively and $E''=(\rho')^{-1}(\tZ)$. 
Note that $\tZ=\tL_1\cap \tL_2$ intersecting transversally with $\tL_3+\cdots+\tL_r+ E$ and $\codim(\tilde{Z}, Y)=2$.
Hence, by Lemma \ref{lem:blow-upInd}
\[R^i\rho'_{W*} F_{(Y', \tL_1'+\cdots+\tL_r' + E' + E'')\otimes \sW}=0\qfor i\ge 1.\]
Since Theorem \ref{thm:CItsp-loginv} has been proved for $i=0$, we have
\[ \rho'_* F_{(Y', \tL_1'+\cdots+\tL_r' + E' + E'')\otimes \sW} = 
F_{(Y, \tL_1+\cdots+\tL_r + E)\otimes \sW}.\]
Hence we obtain
\eq{thm:blow-upAn1}{R^i\rho_{W*} F_{(Y, \tL_1+\cdots+\tL_r + E)\otimes \sW}= 
R^i(\rho\rho')_{W*} F_{(Y', \tL_1'+\cdots+\tL_r' + \tE + E')\otimes \sW}. }
Denote by $\sigma:\hY \to \A^n$ the blow-up in $Z$ and
$\hL_i\subset \hY$ be the strict transform of $L_i$ and $\Xi=\sigma^{-1}(Z)$.
By Lemma \ref{lem:blow-upInd} we get
\eq{thm:blow-upAn2}{R^i\sigma_{W*} F_{(\hY, \hL_1+\cdots+\hL_r+ \Xi)\otimes \sW}=0 \qfor i\ge 1.}
Denote by $\sigma':\hY'\to \hY$ the blow-up in $\hZ=\sigma^{-1}(0)\subset \Xi$
and $\hL'_i,\; \Xi'\subset \hY'$ be the strict transforms of $\hL_i,\; \Xi$ respectively
and $\Xi''=\sigma'^{-1}(\hZ)$.
Note that $\tZ\subset \hL_3\cap \cdots\cap\hL_n\cap \Xi$ and $\codim(\tZ, \hY)= n-1$ and $\tZ$ intersects transversally with $\hL_1+ \hL_2$.
 Thus by Lemma \ref{lem:blow-upInd} and the case $i=0$ of Theorem \ref{thm:CItsp-loginv}, we obtain
\eq{thm:blow-upAn3}{R\sigma'_{W*}F_{(\hY',\hL_1'+\cdots+\hL_r'+ \Xi'+\Xi'')\otimes \sW}=  F_{(\hY, \hL_1+\cdots+\hL_r+ \Xi)\otimes \sW}.}
Finally, by \cite[Lem. 2.15]{brs}, there is an isomorphism of $\A^n\times W$-schemes 
\eq{thm:blow-upAn4}{ (\hY',\hL_1',\dots,\hL_r,\Xi',\Xi'')\cong
(Y', \tL_1',\dots,\tL_r',E',E'').}
Altogether we obtain for $i\ge 1$
\begin{align*}
R^i\rho_{W*} F_{(Y, \tL_1+\cdots+\tL_r + E)\otimes \sW} 
&= R^i(\rho\rho')_{W*} F_{(Y', \tL_1'+\cdots+\tL_r' + E' + E'')\otimes \sW}, 
                          & &   \text{by }\eqref{thm:blow-upAn1},\\
                        & = R^i(\sigma\sigma')_{W*} F_{(\hY',\hL_1'+\cdots+\hL_r'+ \Xi'+\Xi'')\otimes \sW},
                          & & \text{by }\eqref{thm:blow-upAn4},\\
                       & = R^i\sigma_{W*} F_{(\hY, \hL_1+\cdots+\hL_r+ \Xi)\otimes \sW},
                          & & \text{by }\eqref{thm:blow-upAn3},\\
                                                              & = 0,
                          & & \text{by }\eqref{thm:blow-upAn2}.
\end{align*}
This completes the proof of the proposition.
\end{proof}

\begin{remark}\label{rem:logcohRSC}
For simplicity, we write 
\[\Hlog i(-,F)=\Hlog i(-,\ulomegaCI F)\qfor F\in \RSCNis.\]
By \cite[Cor. 6.8]{rs}, if $\ch(k)=0$ and $F=\Omega^i$, we have 
\[ \Hlog i(-,\Omega^i)=H^i(X,\Omega^i(\log |D|) \qfor (X,D)\in \ulMCorls.\]
Hence $\Hlog i(-,F)$ for $F\in \RSCNis$ is a generalization of cohomology of sheaves of logarithmic differentials.
\end{remark}

\section{Relation with logarithmic sheaves with transfers}\label{log-mottives}

In this section we use the same notations as \cite{bpo}.

Let $\lSm$ be the category of log smooth and separated $\fs$ log schemes of finite type over the base field $k$ and $\SmlSm\subset \lSm$ be the full subcategory consisting of objects whose underlying schemes are smooth over $k$.
Let $\lCor$ be the category with the same objects as $\lSm$ and whose morphisms are log correspondences defined in \cite[Def. 2.1.1]{bpo}.
Let $\lCorSm\subset \lCor$ be the full subcategory consisting of all objects in $\SmlSm$. 

Let $\lPST$ be the category of additive presheaves of abelian groups on $\lCor$ and
$\dNST\subset \lPST$ be the full subcategory consisting of those 
$\sF$ whose restrictions to $\lSm$ are dividing Nisnevich sheaves (see \cite[Def. 3.1.4]{bpo}). It is shown in \cite[\S 4 and Pr. 4.6.6]{bpo} that $\dNST$ is a Grothendieck abelian category and there is an equivalence of categories
\eq{dNSTSm}{ \dNST \simeq \dNST(\SmlSm),}
where the right hand side denotes the full subcategory of the category 
$\lPST(\SmlSm)$ of additive presheaves of abelian groups on $\lCorSm$ consisting of those $\sF$ whose restrictions to $\SmlSm$ are dividing Nisnevich sheaves.
 \medbreak
Now we construct a functor
\eq{LogShv}{ \Log : \uMNSTlog \to \dNST.}
For $\fX=(X,\sM)\in\SmlSm$, we put
$\fXMP=(X, \pX)$, where $\pX\subset X$ is the closed subscheme 
consisting of the points where the log-structure $\sM$ is not trivial. 
By \cite[Theorem III.1.11.12]{ogus}, $\pX$ with reduced structure is a normal crossing divisor on $X$ so that we can view $\fXMP$ as an objects of $\ulMCorls$.
For $F\in \uMPSTlog$ and $\fX\in \SmlSm$, we put
\eq{Flog}{ \Flog(\fX)= F(\fXMP).}
Take $\fY \in \SmlSm$ and 
$\alpha\in \lCor(\fY,\fX)$. By \cite[Def. 2.1.1 and Rem. 2.1.1(iv)]{bpo}, we have
\[\alpha\in \uMCorfin((Y,n\cdot \pY),(X,\pX))\;\text{ for some } n>0,\]
where $n\cdot \pY\hookrightarrow Y$ is the $n$-th thickening of $\pY\hookrightarrow Y$.
By the assumption $F\in \uMPSTlog$, the induced map
\[ \Flog(\fX)=F(\fXMP) \rmapo{\alpha^*}  F(Y,n\cdot \pY) \]
factors through $\Flog(\fY)=F(Y,\pY)\subset F(Y,n\cdot\pY)$ and we get a map
\[ \alog : \Flog(\fX)\to \Flog(\fY).\]
Moreover, for a map $\gamma: F\to G$ in $\uMPSTlog$, the diagram
\[\xymatrix{
\Flog(\fX) \ar[r]^{\gamma}\ar[d]^{\alog} & \Glog(\fX) \ar[d]^{\alog}\\
\Flog(\fY) \ar[r]^{\gamma} & \Glog(\fY) \\}\]
is obviously commutative. 
Hence the assignment $\sX\to \Flog(\sX)$ gives an object $\Flog$ of $\lPST(\SmlSm)$ and we get a functor
\eq{LogPSh}{  \Log : \uMPSTlog \to \lPST(\SmlSm)\;;\; F \to \Flog.}
By the definitions of sheaves (\cite[Def. 1]{kmsyI} and \cite[Def. 3.1.4]{bpo}) and 
\cite[Pr. 1.9.2]{kmsyI}, this induces a functor
\[  \uMNSTlog \to \dNST(\SmlSm)\]
which induces the desired functor \eqref{LogShv} using \eqref{dNSTSm}. 
By the construction, for $F\in \uMNSTlog$ and $\fX\in \SmlSm$ with 
$\sX=\fXMP\in \uMCorls$, we have 
\eq{comparisonlogcoh}{H^i_\Nis(X,F_{\sX}) = H^i_\sNis(\fX,\Flog)\; (\Flog=\Log(F)),}
where the right hand side is the cohomology for the strict Nisnevich topology 
(see \cite[Def. 4.3.1]{bpo}). 

\begin{thm}\label{CItspNis-logDM}
For $F\in \CItspNis$, $\Flog=\Log(F)\in \dNST$ is strictly $\bcube$-invariant in the sense 
\cite[Def. 5.2.2]{bpo}. For $\fX\in \SmlSm$ with 
$\sX=\fXMP\in \uMCorls$, we have a natural isomorphism
\eq{CItspNis-logDM}{ H^i_\Nis(X,F_{\sX})\simeq \Hom_{\lDMeff}(M(\fX),\Flog[i]),}
where $\lDMeff$ is the triangulated category of logarithmic motives defined in \cite[Def. 5.2.3]{bpo}.
\end{thm}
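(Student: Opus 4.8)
The plan is to isolate the strict $\bcube$-invariance of $\Flog$ as the one substantial point; the cohomology formula \eqref{CItspNis-logDM} then follows formally, using the comparison \eqref{comparisonlogcoh} and the blow-up invariance of Theorem \ref{thm:CItsp-loginv}. First note that $\Flog=\Log(F)$ is a legitimate object of $\dNST$, since by Theorem \ref{thm:CItsp-log} we have $\CItspNis\subset\uMNSTlog$, the source of $\Log$. Granting that $\Flog$ is strictly $\bcube$-invariant, it is $\bcube$-local in $D(\dNST)$, so $L^\bcube\Flog\simeq\Flog$; since $M(\fX)=L^\bcube\Ztr(\fX)$ and $L^\bcube$ is left adjoint to the inclusion of the $\bcube$-local subcategory, one gets
\[ \Hom_{\lDMeff}(M(\fX),\Flog[i])\simeq \Hom_{D(\dNST)}(\Ztr(\fX),\Flog[i])\simeq \Ext^i_{\dNST}(\Ztr(\fX),\Flog)=H^i_\dNis(\fX,\Flog), \]
the last equality because $\Ztr(\fX)$ is representable and $\Flog$ is concentrated in degree $0$.

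It then remains to identify $H^i_\dNis(\fX,\Flog)$ with $H^i_\Nis(X,F_\sX)$, and this is exactly where the blow-up invariance enters. Using the description of the dividing Nisnevich cohomology in \cite{bpo} as $H^i_\dNis(\fX,\Flog)=\colim_{\fX'\to\fX}H^i_\sNis(\fX',\Flog)$ over log blow-ups $\fX'\to\fX$, and observing that each such $\fX'\to\fX$ is, after passing to associated modulus pairs, a morphism in $\Lfinls$, Theorem \ref{thm:CItsp-loginv} makes every transition map $H^i_\sNis(\fX,\Flog)=\Hlog i(\fXMP,F)\to \Hlog i((\fX')^{MP},F)=H^i_\sNis(\fX',\Flog)$ an isomorphism. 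Hence the colimit degenerates and $H^i_\dNis(\fX,\Flog)=H^i_\sNis(\fX,\Flog)$, which by \eqref{comparisonlogcoh} equals $H^i_\Nis(X,F_\sX)$ for $\sX=\fXMP$. This yields \eqref{CItspNis-logDM} modulo the strict $\bcube$-invariance.

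The crux is thus the strict $\bcube$-invariance of $\Flog$, i.e.\ that $p\colon\fX\otimes\bcube\to\fX$ induces isomorphisms on $H^\bullet_\dNis(-,\Flog)$. By \eqref{dNSTSm} I may take $\fX\in\SmlSm$, and since $(\fX\otimes\bcube)^{MP}=\fXMP\otimes\bcube=\sX\otimes\bcube$ with $\sX=(X,D)=\fXMP$ (both sides carrying the reduced divisor $D\times\P^1+X\times\infty$), the identifications above reduce the claim to
\[ p^*\colon H^i_\Nis(X,F_\sX)\;\xrightarrow{\sim}\;H^i_\Nis(X\times\P^1,F_{\sX\otimes\bcube})\qfor\forall i\geq 0. \]
I would establish this through the Leray spectral sequence for $p\colon X\times\P^1\to X$: the case $i=0$ amounts to $R^0p_*F_{\sX\otimes\bcube}=F_\sX$, which is the section-level $\bcube$-invariance of $F\in\CItspNis$, while the higher terms reduce to the vanishing $R^bp_*F_{\sX\otimes\bcube}=0$ for $b\geq 1$. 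This last vanishing is the cohomological $\bcube$-invariance of objects of $\CItspNis$ proved in \cite{shuji} --- of which the $\P^1$-acyclicity \cite[Th. 0.6]{shuji} used in Lemma \ref{lem:blowup-logA2} is a special case --- and is the main obstacle of the proof, being the only step that is not formal and that genuinely exploits semipurity. A secondary point to verify carefully is the boundary identification $(\fX\otimes\bcube)^{MP}=\sX\otimes\bcube$ together with the compatibility of $p^*$ with \eqref{comparisonlogcoh}, which ensures that the $\bcube$-localization of \cite{bpo} matches the modulus-theoretic projection.
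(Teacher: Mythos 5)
Your proposal is correct and follows essentially the same route as the paper: comparison of strict Nisnevich cohomology of $\Flog$ with $H^i_\Nis(X,F_\sX)$ via \eqref{comparisonlogcoh}, degeneration of the dividing-Nisnevich colimit by the blow-up invariance of Theorem \ref{thm:CItsp-loginv}, strict $\bcube$-invariance from the modulus-theoretic result of \cite[Th. 0.6]{shuji} (your Leray spectral sequence argument is exactly the content of that citation), and the formal adjunction step that the paper delegates to \cite[Pr. 5.2.8]{bpo}.
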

\begin{proof}
We have isomorphisms
\[ H^i_\Nis(X,F_{\sX}) \overset{\eqref{comparisonlogcoh}}{\simeq} 
H^i_\sNis(\fX,\Flog) \overset{(*1)}{\simeq} \colim_{\fY\in \fX_{div}^{Sm}} H^i_\sNis(\fX,\Flog) \overset{(*2)}{\simeq} H^i_\dNis(\fX,\Flog) ,\]
where $(*2)$ comes from \cite[Th. 5.1.8]{bpo} and 
$(*1)$ is a consequence of Theorem \ref{thm:CItsp-loginv} in view of \eqref{comparisonlogcoh}. Hence the strict $\bcube$-invariance of $\Flog$ follows from \cite[Th. 0.6]{shuji}. Finally \eqref{CItspNis-logDM} follows from \cite[Pr. 5.2.8]{bpo}.
\end{proof}

Now we consider the composite functor
\[ \Log': \RSC_\Nis \rmapo{\ulomegaCI} \CItspNis \rmapo{\Log} \dNST.\]

\begin{lemma}\label{lem1;Log}
$\Log$ and $\Log'$ have the same essential image.
\end{lemma}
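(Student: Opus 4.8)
The plan is to prove the two inclusions of essential images separately, with essentially all the content residing in one of them. The inclusion of the essential image of $\Log'$ into that of $\Log$ is formal: by construction $\Log'=\Log\circ\ulomegaCI$, and the fully faithful functor $\ulomegaCI$ of \eqref{omegaCIadjointNis} carries $\RSC_\Nis$ into $\CItspNis$, which by Theorem \ref{thm:CItsp-log} is contained in $\uMNSTlog$. Hence every object $\Log'(G)$ equals $\Log(\ulomegaCI G)$ and so already lies in the essential image of $\Log$.

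For the reverse inclusion I would exploit the fact that, by the very definition \eqref{Flog}, the functor $\Log$ evaluates its argument only on reduced modulus pairs: for $\fX\in\SmlSm$ one has $\Log(F)(\fX)=F(\fXMP)$ with $\fXMP=(X,\pX)$, where $\pX$ carries the reduced structure, so that $\fXMP$ is a reduced object of $\ulMCorls$. Given $F\in\CItspNis$, I would set $G:=\ulomega_! F$, which lies in $\RSC_\Nis$ by \eqref{omegaCIadjointNis}, and consider the unit map $u\colon F\to\ulomegaCI\ulomega_! F=\ulomegaCI G$. Since $u$ is a morphism in $\CItspNis\subseteq\uMPSTlog$, applying the functor \eqref{LogPSh} yields a morphism $\Log(u)\colon\Log(F)\to\Log(\ulomegaCI G)=\Log'(G)$. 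Its value at an arbitrary $\fX\in\SmlSm$ is the section map $F(\fXMP)\to\ulomegaCI G(\fXMP)$ induced by $u$, and this is an isomorphism by Corollary \ref{cor2:purityreducedmodulus}(3) precisely because $\fXMP$ is reduced. Thus $\Log(u)$ is a sectionwise isomorphism of presheaves on $\lCorSm$, hence an isomorphism in $\dNST$, and $\Log(F)\cong\Log'(G)$ lies in the essential image of $\Log'$.

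The only genuine obstacle is this reverse inclusion, and within it the single point requiring care is that nothing beyond the values of $F$ on reduced SNCD modulus pairs ever enters the definition of $\Log$; once that is made precise, the sectionwise isomorphisms provided by Corollary \ref{cor2:purityreducedmodulus}(3) assemble, through the naturality of the unit $u$, into an isomorphism of the associated dividing Nisnevich sheaves, and the remainder of the argument is purely formal.
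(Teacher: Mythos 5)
Your proof is correct and takes essentially the same approach as the paper: the paper's own one-line proof says the lemma ``follows directly from the construction and Corollary \ref{cor2:purityreducedmodulus}(3)'', and your argument is exactly the intended expansion of that remark. The formal inclusion via Theorem \ref{thm:CItsp-log}, together with the observation that $\Log$ only sees the reduced pairs $\fXMP$ so that the unit $u\colon F\to \ulomegaCI\ulomega_!F$ becomes a sectionwise (hence actual) isomorphism $\Log(F)\cong \Log'(\ulomega_!F)$ in $\dNST$, is precisely what the paper's citation of Corollary \ref{cor2:purityreducedmodulus}(3) encodes.
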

\begin{proof}
This follows directly from the construction and 
Corollary \ref{cor2:purityreducedmodulus}(3).
\end{proof}

In what follows, we let 
\eq{LogRSC}{ \Log : \RSC_\Nis \to \dNST\;:\; F\to \Flog }
denote $\Log'$ defined as above. By \eqref{Flog}, we have
\eq{FlogRSC}{ \Flog(X,\triv)= F(X)\qfor F\in \RSCNis,\; X\in \Sm,}
where $(X,\triv)$ denotes the log-scheme with the trivial log structure.
\medbreak

\begin{thm}\label{Log-ffexact}
$\Log$ is exact and fully faithful.
\end{thm}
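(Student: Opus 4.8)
The plan is to treat full faithfulness and exactness separately; once the purity results of \S\ref{puritymodulus} and the identity \eqref{FlogRSC} are in hand, full faithfulness becomes formal and exactness reduces to a single surjectivity statement. Throughout I write $\tF=\ulomegaCI F\in\CItspNis$ for $F\in\RSCNis$, so that $\Log(F)(\fX)=\tF(\fXMP)$ with $\fXMP=(X,\partial\fX)$ reduced, and I use that a morphism $\Log(F)\to\Log(G)$ in $\dNST$ is the same datum as a natural transformation of presheaves on $\lCorSm$, since $\dNST$ is a full subcategory of $\lPST$.

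For full faithfulness I would first produce a restriction map. Restricting a natural transformation $\phi:\Log(F)\to\Log(G)$ along the embedding $\Cor\hookrightarrow\lCor$, $X\mapsto(X,\triv)$ (log correspondences between trivial log schemes being ordinary finite correspondences), and using $\Log(F)(X,\triv)=F(X)$ from \eqref{FlogRSC}, yields a morphism $r(\phi):F\to G$ in $\NST=\RSCNis$. Naturality of \eqref{FlogRSC} in $F$ gives $r(\Log(\gamma))=\gamma$, so $\Log$ is faithful and it remains to check that $r$ is injective. Assuming $\phi$ restricts to $0$ on every $(X,\triv)$, I would take an arbitrary $\fX\in\SmlSm$ and the open immersion $j:(U,\triv)\to\fX$ of the locus $U=X\setminus\partial\fX$ where the log structure is trivial; naturality of $\phi$ then gives a commutative square whose horizontal maps are $j^*$, whose right vertical map is $\phi_{(U,\triv)}=0$, and whose bottom map $\Log(G)(\fX)=\tG(X,\partial\fX)\to\tG(U,\emptyset)=G(U)$ is injective by the semipurity of $\tG$ (Corollary \ref{cor:purityreducedmodulus}). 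Hence $\phi_\fX=0$, so $\phi=0$; thus $r$ is injective and $\Log$ is full, hence fully faithful.

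For exactness, let $0\to F'\to F\to F''\to 0$ be exact in $\RSCNis$, i.e.\ in $\NST$. Since $\ulomegaCI$ is a right adjoint \eqref{omegaCIadjointNis}, it is left exact, so $0\to\ulomegaCI F'\to\ulomegaCI F\to\ulomegaCI F''$ is exact in $\uMNST$; evaluating at each $\fXMP$ and sheafifying shows $\Log$ is left exact. The remaining point is that $\Log(F)\to\Log(F'')$ is an epimorphism in $\dNST$, which I would verify on stalks of the dividing Nisnevich topology. Two inputs enter: (i) blow-up invariance, Theorem \ref{thm:CItsp-loginv}, which makes the sections of $\Log(-)$ insensitive to dividing covers, so that a $\dNis$-stalk of $\Log(F)$ at a point lying over a henselian-local $X\in\Sm$ with reduced boundary $D$ is computed by $\ulomegaCI F(X^h,D)$ for the associated reduced henselian-local modulus pair; and (ii) Corollary \ref{cor2:purityreducedmodulus}(1), applied to $0\to\ulomegaCI F'\to\ulomegaCI F\to\ulomegaCI F''$, whose image under $\ulomega_!$ is the original short exact sequence (as $\ulomega_!\ulomegaCI\cong\id$ by full faithfulness of $\ulomegaCI$), so that the hypothesis ``$\ulomega_!$ of the second map is surjective'' holds and yields the exactness of
\[0\to\ulomegaCI F'(X^h,D)\to\ulomegaCI F(X^h,D)\to\ulomegaCI F''(X^h,D)\to 0\]
on every reduced henselian-local pair. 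Combining (i) and (ii) gives surjectivity on all stalks, so $\Log(F)\to\Log(F'')$ is epi and $\Log$ is exact.

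I expect the surjectivity step in the exactness argument to be the main obstacle. Left exactness and the entirety of full faithfulness are formal consequences of \eqref{FlogRSC} and semipurity, whereas right exactness genuinely requires the reduced-modulus purity theorem of \S\ref{puritymodulus} through Corollary \ref{cor2:purityreducedmodulus}(1), together with the identification of dividing-Nisnevich stalks with sections over reduced henselian-local modulus pairs furnished by the blow-up invariance of \S\ref{invariance-bu}; the care needed is in justifying that stalk-identification rather than in any further computation.
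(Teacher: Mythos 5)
Your proposal is correct and, in its main structure, coincides with the paper's own proof: the full faithfulness argument is exactly the paper's (restriction along $X\mapsto (X,\triv)$ using \eqref{FlogRSC} and the identification $\Cor(Y,X)=\lCor((Y,\triv),(X,\triv))$ from \cite{bpo}, then injectivity of the restriction via the commutative square for $j:(X\setminus\pX,\triv)\to\fX$, whose map $\Glog(\fX)=\ulomegaCI G(\fXMP)\to G(X\setminus\pX)$ is injective by semipurity), and the heart of exactness is the same in both: Corollary \ref{cor2:purityreducedmodulus}(1) applied to $0\to\ulomegaCI F'\to\ulomegaCI F\to\ulomegaCI F''$, with the surjectivity hypothesis verified, as you rightly do, via $\ulomega_!\ulomegaCI\cong\id$ (full faithfulness of $\ulomegaCI$). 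The one genuine divergence is your epimorphism step: you propose to check surjectivity on \emph{dividing}-Nisnevich stalks and to invoke the blow-up invariance Theorem \ref{thm:CItsp-loginv} to identify such a stalk with sections over a reduced henselian local modulus pair. That detour is sound in outline, but it is heavier than needed and is precisely the part requiring extra justification (cofinality of smooth-center blow-ups among dividing covers from \cite{bpo}, plus the $i=0$ case of Theorem \ref{thm:CItsp-loginv}); you correctly flagged it as the delicate point. The paper sidesteps it entirely: since every strict Nisnevich cover is in particular a dividing Nisnevich cover (and dNis sheaves are in particular sNis sheaves), exactness of $0\to\Flog\to\Glog\to H^{\log}\to 0$ in $\dNST$ follows from exactness of sections over every $\fX\in\SmlSm$ with $X$ henselian local --- this is the paper's Claim, which Corollary \ref{cor2:purityreducedmodulus}(1) yields directly on the reduced pair $\fXMP$. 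So blow-up invariance is not an input to Theorem \ref{Log-ffexact} at all; in this paper it is needed only for Theorem \ref{CItspNis-logDM}, where it furnishes the comparison $H^i_\sNis(\fX,\Flog)\simeq H^i_\dNis(\fX,\Flog)$ via \cite[Th. 5.1.8]{bpo}. Your version buys nothing extra here, while the paper's observation makes the surjectivity step formal; otherwise the two proofs agree.
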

\begin{proof}
First we prove the full faithfulness.
The faithfulness follows from \eqref{FlogRSC}.
Let $F,G\in \RSCNis$ and $\gamma: \Flog\to \Glog$ be a map in $\dNST$.
By \eqref{FlogRSC} it induces maps
$\gamma_X: F(X) \to G(X)$ for all $X\in \Sm$.
They are compatible with the action of $\Cor$ since by \cite[Rem 2.1.3(3)]{bpo},
\[ \Cor(Y,X)= \lCor(Y,\triv),(X,\triv))\qfor X,Y\in \Sm.\]
Thus $\gamma_X$ for $X\in \Sm$ give a map 
$\gamma_{\RSCNis}:F\to G$ in $\RSCNis$.
To see $\Log(\gamma_{\RSCNis})=\gamma$, it suffices by \eqref{dNSTSm} to show that $\Log(\gamma_{\RSCNis})$ and $\gamma$ induce the same map
$\Flog(\fX) \to \Glog(\fX)$ for $\fX\in \SmlSm$.
If $\fX$ has the trivial log-structure, this follows immediately from the construction of $\gamma_{\RSC}$. The general case follows from this in view of the commutative diagram
\[\xymatrix{
\Flog(\fX) \ar[r]^{\gamma} \ar[d]^{j^*} &\Glog(\fX) \ar[d]^{j^*} \\
\Flog(X\backslash\pX,\triv) \ar[r]^{\gamma} &\Glog(X\backslash\pX,\triv) \\}\]
where $j^*$ are induced by the natural map $(X\backslash\pX,\triv)\to \fX$ of log-schemes and injective by the construction and the semipurity of $\ulomegaCI F$. This completes the proof of the full faithfulness.

Next we show the exactness of $\Log$. It suffices to show the following.

\begin{claim}\label{claim;Log-ffexact}
Given an exact sequence $0\to F\to G\to H\to 0$ in $\RSCNis$,
the induced sequence 
\[0\to \Flog(\fX)\to \Glog(\fX)\to H^{\log}(\fX)\to 0\]
is exact for every $\fX\in \SmlSm$ with $X$ henselian local.
\end{claim}

Indeed, by the definition of $\Log$, this is reduced to the exactness of 
\[ 0\to \ulomegaCI F(\fXMP)\to \ulomegaCI G(\fXMP)\to \ulomegaCI H(\fXMP)\to 0,\]
which follows from Corollary \ref{cor2:purityreducedmodulus}(2).
This completes the proof of Theorem \ref{Log-ffexact}.
\end{proof}


\end{document}